\newtheorem{thm}{Theorem}[section]
\newtheorem{ex}[thm]{Example}
\newtheorem{lem}[thm]{Lemma}   
\newtheorem{prop}[thm]{Proposition}
\newtheorem{defn}[thm]{Definition}
\newtheorem{rmrk}[thm]{Remark}  
\newcommand{\be}{\begin{equation}}
\newcommand{\ee}{\end{equation}}
\newcommand{\bee}{\begin{equation*}}
\newcommand{\eee}{\end{equation*}}
\newcommand{\N}{\mathbb{N}}
\newcommand{\R}{\mathbb{R}}
\newcommand{\One}{{\bf \rm{1}}}
\newcommand{\Z}{\mathbb{Z}}
\newcommand{\diam}{\operatorname{Diam}}
\newcommand{\vol}{\operatorname{Vol}}
\newcommand{\ric}{\operatorname{Ric}}
\newcommand{\cur}{\operatorname{R}}
\newcommand{\Hto}{\stackrel { \textrm{H}}{\longrightarrow} }
\newcommand{\GHto}{\stackrel { \textrm{GH}}{\longrightarrow} }
\newcommand{\Fto}{\stackrel {\mathcal{F}}{\longrightarrow} }
\newcommand{\Fm}{{\mathcal F}}
\newcommand{\set}{\rm{set}}
\newcommand{\Lip}{\operatorname{Lip}}
\newcommand{\mass}{{\mathbf M}}
\newcommand{\curr}{{\mathbf M}}         %metric current%
\newcommand{\intrectcurr}{{\mathcal I}} %metric int. rect. current%
\newcommand{\intcurr}{{\mathbf I}}      %metric integral current%
\newcommand{\fillvol}{{\operatorname{FillVol}}}
\newcommand{\rstr}{\:\mbox{\rule{0.1ex}{1.2ex}\rule{1.1ex}{0.1ex}}\:}
\newcommand{\bdry}{\partial}
\begin{document}

\title{Convergence of Manifolds and Metric Spaces with Boundary}
\author{Raquel Perales}

\newpage
\thanks{The author gratefully acknowledges financial support from the National Science Foundation under Grant No. 0932078 000 via the Mathematical Science Research Institute, Berkeley, where the author was a visitor for a month in the Fall Semester 2013, and under NSF DMS 10060059 as a funded doctoral student of Prof. Christina Sormani. In addition, the author received a Summer Research Award for doctoral students from the Stony Brook University Department of Mathematics.}
\address{CONACYT Research Fellow at IMATE UNAM, Oaxaca, Mexico}
\email{raquel.peralesaguilar@gmail.com}

\keywords{}

%%%%%%%%%%%%%%%%%%%%%%%%%%%%%%%%
%%%%%%%%%%%%%%%%%%%%%%%%%%%%%%%%

\begin{abstract}
  We study sequences of oriented Riemannian manifolds with boundary
and, more generally, integral current spaces and metric spaces 
with boundary.  {\color{blue}For a metric space, we define its boundary to be the completion of the space minus the space. We impose conditions on these spaces in order to get  Gromov-Hausdorff (GH) subconvergence. By adding some other conditions}  we prove theorems demonstrating that the Gromov-Hausdorff (GH)
and Sormani-Wenger Intrinsic Flat (SWIF) limits of sequences of such
metric spaces agree.   Thus in particular the limit spaces we get are
countably $\mathcal{H}^n$ rectifiable spaces.  From these we derive GH compactness theorems for sequences of Riemannian manifolds with boundary where both the GH and SWIF limits agree. 
 For sequences of Riemannian manifolds with boundary
 we only require nonnegative Ricci curvature, upper bounds on volume, noncollapsing conditions on the interior of the manifold and diameter controls on the level sets near the boundary.  
\end{abstract}

\maketitle{}

\newpage

%\newpage
\section{Introduction}

In the past few decades many important compactness theorems have been 
proven for families of compact Riemannian manifolds without boundary.  Gromov introduced the notion of Gromov-Hausdorff (GH) convergence of Riemannian manifolds to metric spaces, (X,d). He proved that the
family of manifolds with nonnegative Ricci curvature and uniformly
bounded diameter are precompact in GH sense \cite{Gromov-metric}.
Cheeger-Colding have proven many properties of the GH limits of these
manifolds including rectifiability \cite{ChCo-PartI}. The Sormani-Wenger Intrinsic Flat (SWIF) convergence of oriented Riemannian manifolds to countably $\mathcal{H}^n$
rectifiable metric spaces called integral current spaces, $(X,d,T)$,
was introduced in \cite{SorWen2}. They proved that when the sequence of manifolds is noncollapsing and has nonnegative Ricci curvature the SWIF and GH limits agree \cite{SorWen1}. In general GH and SWIF limits need not agree and GH limits need not be countably
$\mathcal{H}^n$ rectifiable metric
spaces (cf. the appendix of \cite{SorWen1} by Schul and Wenger).

Here we prove GH and SWIF compactness theorems for oriented Riemannian manifolds with boundary.  Note that there are sequences of flat manifolds with boundary of bounded diameter with volume bounded below which have no GH limit, see Example \ref{ex-multipleSplines}. Nevertheless,  Wenger proved that 
a sequence of $n$-dimensional oriented Riemannian manifolds $M_j$ with boundary that 
satisfy 
\be
\diam(M_j) \leq D,\,\,\,\vol(M_j)\leq V,\,\,\vol(\bdry M_j) \leq A 
\ee
has a SWIF convergent subsequence \cite{Wenger-compactness}
(cf. \cite{SorWen2}).  Knox proved weak $L^{1,p}$ and $C^{1,\alpha}$ convergence of Riemannian manifolds with two sided bounds on the sectional curvature of the manifolds and of their boundaries, a lower bound on the volume of the boundaries and 
two sided bounds on the mean curvature of the boundary \cite{Knox-2012}. Wong proved GH convergence of Riemannian manifolds with Ricci curvature bounded below and two sided bounds on the second fundamental form of the boundaries \cite{Wong-2008}. Under stronger conditions Anderson-Katsuda-Kurylev-Lassas-Taylor \cite{Anderson-2004} and Kodani \cite{Kodani-1990} have respectively proven $C^{1,\alpha}$ and Lipschitz compactness theorems.  

We first prove compactness theorems for sequences of metric spaces and from them we derive compactness theorems for sequences of Riemannian manifolds with boundary where both the GH and SWIF limits agree. Thus we produce countably $\mathcal{H}^n$ rectifiable GH limit spaces. For sequences of Riemannian manifolds we only require Ricci curvature bounds, noncollapsing conditions on the interior of the manifold and additional controls on the boundary. 

To precisely state our theorems we recall a few notions. Let $(M,g)$ be a Riemannian manifold with boundary, $\partial M$. We denote by $d$ the metric on $M$ induced by $g$. For $\delta >0$ we define the $\delta$-inner region of $M$ by
\be 
M^{\delta} =\{ \, x\in M : d(x,\bdry M) >\delta\}.
\ee
There are two metrics on $M^\delta$. The restricted metric $d|_M$ (that we denote by $d$ to simplify notation) and the length metric $d_{M^\delta}$ induced by $g$.
The diameter of $M^\delta$ with respect to this metric is given by
\be
\diam(M^\delta, d_{M^\delta})=\sup \left\{d_{M^\delta}(x,y): x,y \in M^\delta \right\}.
\ee

In \cite{PerSor-2013}, the author and Sormani proved a GH compactness theorem for sequences of inner regions $(M^\delta_j,d_j)$ that have nonnegative Ricci curvature, 
upper bounds on volume and diameter as in (\ref{eq-PerSorClass1}) 
and a noncollapsing condition as in
(\ref{eq-PerSorClass2}) 
(cf. Theorem \ref{thm-GHdelta} within).  Now we
add one additional condition on the boundary (\ref{eq-ConvBound})
to obtain GH convergence of the sequence of manifolds themselves:   

\begin{thm}\label{thm-GHconvergence2}
Let $n\in \N$, $\delta,D_i,V,\theta >0$ and $\{\delta_i\} \subset \R$ be a decreasing sequence that converges to zero. Let $(M_j,g_j)$ be a sequence of compact oriented manifolds with boundary such that
\be\label{eq-PerSorClass1} 
\ric(M_j) \geq 0, \,\,\,  \vol(M_j)\le V, \,\,\,
\diam(M_j^{\delta_i}, {\,d_{M_j^{\delta_i}}}) \leq D_{i}, 
\ee
\be\label{eq-PerSorClass2} 
\exists q\in M_j^\delta\,\, \text{such that}\,\,\vol(B(q,\delta)) \ge \theta\delta^n, 
\ee
where $B(q,\delta)$ is the ball in $M_j$ with center $q$  and radius $\delta$, and 
suppose that there is a compact metric space $(X_\bdry,d_\bdry)$ such that
\be\label{eq-ConvBound} 
(\bdry M_j,d_j) \GHto (X_\bdry,d_\bdry).
\ee
Then a subsequence of $\{(M_j,d_j)\}_{j=1}^{\infty}$ converges in GH sense.
\end{thm}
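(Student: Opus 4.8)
The plan is to show that $\{(M_j,d_j)\}_{j=1}^\infty$ is uniformly totally bounded and then to apply Gromov's compactness theorem. Thus I must produce a bound on $\diam(M_j,d_j)$ independent of $j$ and, for every $\varepsilon>0$, a number $N(\varepsilon)$, independent of $j$, so that $M_j$ can be covered by $N(\varepsilon)$ balls of radius $\varepsilon$. The interiors of the $M_j$ are already controlled by (\ref{eq-PerSorClass1})--(\ref{eq-PerSorClass2}) together with Theorem~\ref{thm-GHdelta}, so the real issue is to transfer this control across the collars $\{x\in M_j:d_j(x,\bdry M_j)\le\delta_i\}$ by means of the assumed boundary convergence (\ref{eq-ConvBound}); recall that (\ref{eq-ConvBound}) forces $\sup_j\diam(\bdry M_j,d_j)<\infty$ and makes the metric spaces $(\bdry M_j,d_j)$ uniformly totally bounded.

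For the diameter estimate, fix an index $i_0$ with $\delta_{i_0}\le\delta$ and a further index $i_1>i_0$ with $\delta_{i_1}<\delta_{i_0}$, and for each $j$ let $q_j\in M_j^\delta$ be the point provided by (\ref{eq-PerSorClass2}); note $q_j\in M_j^{\delta_{i_1}}\subseteq M_j^{\delta_{i_0}}$. I claim $d_j(x,q_j)\le 2\delta_{i_0}+\diam(\bdry M_j,d_j)+D_{i_1}$ for every $x\in M_j$. If $x\in M_j^{\delta_{i_0}}$ this follows from $d_j(x,q_j)\le d_{M_j^{\delta_{i_0}}}(x,q_j)\le D_{i_0}$ by (\ref{eq-PerSorClass1}). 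Otherwise $d_j(x,\bdry M_j)\le\delta_{i_0}$; pick a nearest boundary point $x'\in\bdry M_j$, so $d_j(x,x')\le\delta_{i_0}$, and pick a minimizing geodesic $\gamma$ in $M_j$ from $q_j$ to a nearest boundary point $\bar q_j$. Since $q_j\in M_j^\delta$ the length of $\gamma$ exceeds $\delta\ge\delta_{i_0}$, so the point $p_j$ on $\gamma$ at arclength $\delta_{i_0}$ from $\bar q_j$ exists; because $\gamma$ realizes $d_j(q_j,\bdry M_j)$ one gets $d_j(p_j,\bdry M_j)\ge\delta_{i_0}>\delta_{i_1}$, i.e. $p_j\in M_j^{\delta_{i_1}}$. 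Then $d_j(q_j,\bar q_j)\le d_j(q_j,p_j)+d_j(p_j,\bar q_j)\le D_{i_1}+\delta_{i_0}$ by (\ref{eq-PerSorClass1}), and $d_j(x,q_j)\le d_j(x,x')+d_j(x',\bar q_j)+d_j(\bar q_j,q_j)$ finishes the claim. Since $\sup_j\diam(\bdry M_j,d_j)<\infty$ by (\ref{eq-ConvBound}), $\diam(M_j,d_j)$ is bounded independently of $j$.

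For the covering estimate, let $\varepsilon>0$ and choose an index $i$ with $\delta_i<\varepsilon/4$; decompose $M_j=M_j^{\delta_i}\cup\{x:d_j(x,\bdry M_j)\le\delta_i\}$. By Theorem~\ref{thm-GHdelta} (with $\delta_i$ in place of $\delta$), $\{(M_j^{\delta_i},d_j)\}_j$ is precompact in the GH sense, hence uniformly totally bounded, so it admits an $(\varepsilon/2)$-net of cardinality bounded independently of $j$. For the collar, each of its points lies within $\delta_i$ of $\bdry M_j$; since $(\bdry M_j,d_j)\GHto(X_\bdry,d_\bdry)$, each $(\bdry M_j,d_j)$ admits a $\delta_i$-net of cardinality bounded independently of $j$, and such a net is a $2\delta_i$-net, hence an $\varepsilon$-net, of the collar. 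The union of the two nets is an $\varepsilon$-net of $M_j$ of cardinality bounded independently of $j$. Together with the diameter bound this is uniform total boundedness, and Gromov's compactness theorem produces the desired GH-convergent subsequence.

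The step I expect to be the main obstacle is the treatment of the collar: the hypotheses (\ref{eq-PerSorClass1})--(\ref{eq-PerSorClass2}) give no control there on their own, since for instance long thin ``splines'' attached near the boundary are consistent with nonnegative Ricci curvature and bounded volume (cf. Example~\ref{ex-multipleSplines}), so both the diameter estimate and the covering count for the collar must genuinely invoke (\ref{eq-ConvBound}). The one place where the argument is more than bookkeeping is the geodesic ``bridge'' in the diameter estimate, which connects the interior noncollapsing point $q_j$ to the boundary at a cost controlled by $D_{i_1}$; everything else reduces to combining Theorem~\ref{thm-GHdelta} with the uniform total boundedness of the boundaries and one application of Gromov's theorem.
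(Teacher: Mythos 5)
Your proof is correct and follows essentially the same strategy as the paper's, just with the intermediate metric--space compactness theorem (Theorem~\ref{thm-GHconvergence1}) inlined rather than cited. The covering argument is identical in spirit to the paper's proof of Theorem~\ref{thm-GHconvergence1}: cover $M_j^{\delta_i}$ using uniform total boundedness of the inner regions, cover the collar by projecting a uniformly bounded $\delta_i$-net of $(\bdry M_j,d_j)$ (exactly Lemma~\ref{lem-cover2cov} in the paper), and invoke Gromov's compactness. Where you diverge is the diameter bound: the paper gets it for free from the length--space structure ($\diam(\bar X_j)\le 2\delta_1 N(\delta_1)$ once the covering function is built), whereas you run a hands-on ``geodesic bridge'' argument from $q_j$ to a nearest boundary point; both are valid, and yours has the minor advantage of giving an explicit bound, at the cost of being longer. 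One small step you left implicit and should spell out: to apply Theorem~\ref{thm-GHdelta} at scale $\delta_i\le\delta$ you need the noncollapsing hypothesis at that scale, namely $\vol(B(q,\delta_i))\ge\theta'\delta_i^n$ for some $q\in M_j^{\delta_i}$ with $\theta'$ independent of $j$; this follows from $q\in M_j^\delta\subset M_j^{\delta_i}$, $\vol(B(q,\delta))\ge\theta\delta^n$, the fact that $B(q,\delta)$ misses $\bdry M_j$, and Bishop--Gromov monotonicity of $\vol(B(q,r))/r^n$, giving $\theta'=\theta$ (this is exactly the content of Remark~\ref{rmrk-VolBall} together with Theorem~\ref{thm-GHdeltas}, which the paper invokes in one stroke). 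Likewise, your phrase ``precompact in the GH sense'' for $\{(M_j^{\delta_i},d_j)\}_j$ should be justified by noting that Theorem~\ref{thm-GHdelta} applies to every subsequence, so the converse Gromov theorem (Theorem~\ref{thm-ConverseGromov}) yields the uniform totally bounded covering function. Neither is a real gap, just bookkeeping that the paper packages into its cited theorems.
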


In Example~\ref{ex-oneSplineInside} we define a sequence $(M_j,d_j)$ with no GH converging subsequence that satisfies all the conditions of Theorem~\ref{thm-GHconvergence2} except the
Ricci lower bound. Note that, by Gromov's Embedding Theorem, 
if $(M_j,d_j)$ converges in GH sense then a subsequence of 
$(\bdry M_j,d_j)$ converges in GH sense.  
In Example \ref{ex-multipleSplines} we define a sequence $(M_j,d_j)$ with no GH converging subsequence that satisfies all the conditions
of Theorem~\ref{thm-GHconvergence2} except that $(\bdry M_j,d_j) $ does not have any GH convergent subsequence.    In Theorem \ref{thm-bdryConv} we obtain convergence of the boundary as in (\ref{eq-ConvBound}) by requiring uniform bounds on the second fundamental form of $\bdry M_j$ and its derivative in the normal direction.

Suppose that $\{(M_j,g_j)\}$ satisfies the hypotheses of Theorem \ref{thm-GHconvergence2} so that we
have a subsequence such that 
$
(M_j, d_j) \GHto (X,d_X) .
$ 
Sormani-Wenger proved that for such a sequence, if for all $j$
\be \label{V-A-conditions}
\vol(M_j) \leq V \textrm{ and } \vol(\bdry M_j) \leq A 
\ee
then there exists a subsequence and an integral current space $(Y,d_Y, T)$ such that 
\be 
(M_{j_k}, d_{j_k}) \GHto (X,d_X) \,\, \text{and}\,\, (M_{j_k}, d_{j_k}, T_{j_k}) \Fto (Y, d_Y, T),
\ee 
where either $Y \subset X$ or $(Y, d_Y, T)$ is the zero integral current space \cite{SorWen2} (cf. Theorem \ref{thm-FlatInGH}).  In \cite{SorWen1} , they proved that 
for a sequence $(M_j,g_j)$ of oriented compact $n$-dimensional Riemannian manifolds with no boundary, with nonnegative Ricci curvature and with
\be
0 <v \leq \vol(M_j) \leq V,
\ee 
the GH and SWIF limits agree, $Y=X$ (cf. Theorem \ref{thm-SWgh=if}). They proved this by showing that the GH limit, $X$, is contained in a nonzero SWIF limit, $Y$,
using work of Cheeger-Colding \cite{ChCo-PartI}, Colding \cite{Colding-volume} and Perelman \cite{Perelman-max-vol}.   

In this paper we prove the corresponding theorem for manifolds with boundary.  We assume the same hypotheses as in Theorem \ref{thm-GHconvergence2} (\ref{eq-PerSorClass1-2})-(\ref{eq-ConvBound-2}) and one additional area bound on the boundary (\ref{A-condition-only}):

\begin{thm}\label{thm-GH=IF}
Let $n\in \N$, $\delta,D_i,V,\theta >0$ and $\{\delta_i\} \subset \R$ with $\delta_i$ decreasing to $0$. Let $(M_j,g_j)$ be a sequence of compact oriented manifolds with boundary such that
\be\label{eq-PerSorClass1-2} 
\ric(M_j) \geq 0, \,\,\,  \vol(M_j)\le V, \,\,\,
\diam(M_j^{\delta_i}, {\,d_{M_j^{\delta_i}}}) \leq D_{i}, 
\ee
\be\label{eq-PerSorClass2-2} 
\exists q\in M_j^\delta\,\, \text{such that}\,\,\vol(B(q,\delta)) \ge \theta\delta^n, 
\ee
and suppose that there is a compact metric space $(X_\bdry,d_\bdry)$ such that
\be\label{eq-ConvBound-2} %never refed too
(\bdry M_j,d_j) \GHto (X_\bdry,d_\bdry).
\ee
In addition, if for all $j$ we have 
\be\label{A-condition-only}
\vol(\bdry M_j) \leq A.
\ee
Then there is a subsequence that converges in SWIF sense to  a non zero integral current space:
\be
(M_{j_k}, d_{j_k}, T_{j_k}) \Fto (Y \subset X,d_X,T),
\ee
where $(X,d_X)$ denotes the GH limit of  $\{M_{j_k}, d_{j_k}\}_{k=1}^\infty$ such that 
\be
X \setminus X_\bdry \subset Y. 
\ee
If the GH limit of the boundaries is contained in the SWIF limit of the manifolds (or in the completion),
\be
\partial M_j \GHto X_{\partial} \subset Y,
\ee 
then $X=Y$ ($X=\bar Y$). Thus, $X$ is countably $\mathcal{H}^n$ rectifiable, where $\mathcal{H}^n$ denotes $n$-Hausdorff measure.  
\end{thm}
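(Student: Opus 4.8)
The plan is to combine the Gromov--Hausdorff compactness already available (Theorem~\ref{thm-GHconvergence2}) with the Sormani--Wenger machinery relating GH and SWIF limits (Theorems~\ref{thm-FlatInGH} and~\ref{thm-SWgh=if}), the one new ingredient being to run that machinery \emph{locally}, on balls sitting in the interior of the $M_j$ and hence away from the ``missing'' set $X_\bdry$. Since hypotheses \eqref{eq-PerSorClass1-2}--\eqref{eq-ConvBound-2} are exactly those of Theorem~\ref{thm-GHconvergence2}, after passing to a subsequence $(M_j,d_j)\GHto(X,d_X)$ with $X$ compact and $\diam(M_j,d_j)\le D$ for a uniform $D$. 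By Gromov's embedding theorem I fix a compact metric space $Z$ with isometric embeddings $M_j\hookrightarrow Z$ and $X\hookrightarrow Z$ such that $M_j\to X$ in the Hausdorff sense in $Z$, and, after a further subsequence and using \eqref{eq-ConvBound-2}, also $\bdry M_j\to X_\bdry$ in the Hausdorff sense in $Z$, so that $X_\bdry\subset X$. Since in addition $\vol(\bdry M_j)\le A$, Theorem~\ref{thm-FlatInGH} produces a further subsequence and an integral current space $(Y,d_Y,T)$ with $(M_j,d_j,T_j)\Fto(Y,d_Y,T)$, where $T_j$ is the integration current of the oriented manifold $M_j$ and either $Y\subset X$ or $(Y,d_Y,T)$ is the zero integral current space.

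The main point is to prove $X\setminus X_\bdry\subset Y$. Fix $p\in X\setminus X_\bdry$, set $4\rho:=d_X(p,X_\bdry)>0$, and pick $p_j\in M_j$ with $d_Z(p_j,p)\to 0$; since $\bdry M_j\to X_\bdry$ in $Z$, for all large $j$ we have $d_j(p_j,\bdry M_j)\ge 3\rho$, so for $r\le 2\rho$ the balls $B(p_j,r):=B_{M_j}(p_j,r)$ lie in the interior of $M_j$ and, as far as Bishop--Gromov relative volume comparison is concerned, behave like balls in a boundaryless manifold with $\ric\ge 0$. The next step is a uniform interior noncollapsing estimate $\vol\big(B(p_j,r)\big)\ge c\,r^n$ for all $0<r<r_0$, with $c,r_0>0$ depending on $p$ through $\rho$ but independent of $j$: choosing $i$ with $\delta_i<\min\{\delta,3\rho\}$, both $q$ and $p_j$ lie in $M_j^{\delta_i}$, which is connected with $d_{M_j^{\delta_i}}$-diameter $\le D_i$ (since this metric is the length metric, finiteness of the diameter forces connectedness), hence there is a path of length $\le D_i+1$ inside $M_j^{\delta_i}$ joining $q$ to $p_j$; covering it by $N\lesssim D_i/\delta_i$ interior balls of radius $\sim\delta_i$ and chaining the Euclidean Bishop--Gromov inequality along them, starting from the noncollapsed ball $B(q,\delta)\subset M_j^{\delta}$ of \eqref{eq-PerSorClass2-2}, yields a lower bound on $\vol\big(B(p_j,\delta_i/8)\big)$ of the form $3^{-nN}\theta(\delta_i/8)^n$, and Bishop--Gromov at $p_j$ then propagates this to all smaller radii. (This interior noncollapsing is what underlies Theorem~\ref{thm-GHdelta}; cf.~\cite{PerSor-2013}.)

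With this lower volume bound in hand, and the coarea estimate $\int_0^{r_0}\mathbf{M}\big(\bdry(T_j\rstr B_Z(p_j,r))\big)\,dr\le\vol\big(B(p_j,r_0)\big)\le V$ controlling the slice masses for a.e.\ $r$, I would run the argument from the proof of Theorem~\ref{thm-SWgh=if}: for a.e.\ small $r$ one has $T_j\rstr B_Z(p_j,r)\Fto T\rstr B_Z(p,r)$, while Gromov's filling volume inequality \cite{Gromov-metric} applied to the noncollapsed interior balls $B(p_j,r)$ gives $\mathcal{F}\big(T_j\rstr B_Z(p_j,r)\big)\ge c'r^n$ uniformly in $j$; since the flat norm is continuous under flat convergence this yields $\mathbf{M}\big(T\rstr B_Z(p,r)\big)\ge\mathcal{F}\big(T\rstr B_Z(p,r)\big)=\lim_j\mathcal{F}\big(T_j\rstr B_Z(p_j,r)\big)\ge c'r^n$, and letting $r\to 0$ along good radii shows $p$ has positive lower $n$-density for $\|T\|$, i.e.\ $p\in Y$. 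Running this for every $p\in X\setminus X_\bdry$ gives $X\setminus X_\bdry\subset Y$; in particular $Y\neq\emptyset$ (take $p_j=q$, whose $Z$-limit lies in $X\setminus X_\bdry$), so we are in the case $Y\subset X$. I expect this localized step to be the main obstacle: transplanting the ``GH limit $\subset$ nonzero SWIF limit'' argument, designed for closed noncollapsed manifolds with $\ric\ge 0$, to balls that may sit arbitrarily close to $\bdry M_j$ is what forces both the chaining argument for interior noncollapsing above and the care needed to ensure the filling-volume and flat-norm lower bounds only ever invoke genuine balls $B(p_j,r)\subset M_j$, with the slicing keeping the boundary pieces $\bdry(T_j\rstr B)$ under control.

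Finally, the remaining assertions are bookkeeping. If $\bdry M_j\GHto X_\bdry\subset Y$, then $X=(X\setminus X_\bdry)\cup X_\bdry\subset Y\subset X$, hence $X=Y$; if instead $X_\bdry\subset\bar Y$, then $X=(X\setminus X_\bdry)\cup X_\bdry\subset\bar Y$, while $\bar Y\subset X$ because $X$ is compact and $Y\subset X$, hence $X=\bar Y$. In either case $X$ equals $Y$ or $\bar Y$; since $Y$ is an integral current space it is countably $\mathcal{H}^n$ rectifiable, and $\bar Y=\spt T$ differs from $Y$ by a set of $\mathcal{H}^n$-measure zero, so $X$ is countably $\mathcal{H}^n$ rectifiable in both cases.
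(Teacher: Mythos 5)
Your overall architecture matches the paper's: GH convergence via Theorem~\ref{thm-GHconvergence2}, SWIF subconvergence via Theorem~\ref{thm-FlatInGH}, then a local density estimate $\|T\|(B(p,r))\geq c'r^n$ at interior points to conclude $X\setminus X_\bdry\subset Y$, and the same final bookkeeping. But there is a genuine gap at the crucial step. You claim that the uniform interior noncollapsing $\vol(B(p_j,r))\geq c\,r^n$ (your Bishop--Gromov chaining, which is indeed Remark~\ref{rmrk-VolBall}) together with ``Gromov's filling volume inequality'' yields $\mathcal{F}\big(T_j\rstr B_Z(p_j,r)\big)\geq c'r^n$ uniformly in $j$. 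That implication is false as stated, and it is exactly where the hard analysis lives. Gromov's filling inequality is an \emph{upper} bound on filling volume in terms of boundary mass; the \emph{lower} bound on $\fillvol\big(\bdry(B(p_j,r),d_j,T_j\rstr B(p_j,r))\big)$ that one needs (Theorem~\ref{thm-fillvol}, after Greene--Petersen and Sormani--Wenger) requires local contractibility of every $B(z,r)\subset B(p_j,r_0)$ within $B(z,kr)$, not merely a volume lower bound with a possibly tiny constant $c=3^{-nN}\theta$. In the paper this contractibility is extracted only at Cheeger--Colding \emph{regular} points $x$ of the limit: the tangent cones there are $\R^n$ (Theorem~\ref{thm-RegularSet}), so small balls around $x_j$ are GH-close to Euclidean balls, Colding's volume convergence (Theorem~\ref{thm-Colding-volume}) upgrades this to the almost-maximal volume condition $\vol(B(z,\rho))\geq(1-\varepsilon)\omega_n\rho^n$, and only then does Perelman's Main Lemma (Theorem~\ref{thm-Perelman}) deliver contractibility. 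Your constant $c$ is nowhere near $(1-\varepsilon)\omega_n$, so Perelman's lemma does not apply to your balls, and nothing else in your argument produces the contractibility that the filling-volume estimate needs.

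Consequently your argument, even once repaired, only covers the regular points of the inner limits $X(\delta_i)$. The nonregular points of $X\setminus X_\bdry$ still have to be placed in $Y$, and the paper does this by a separate measure-theoretic argument (its Claim~3): the singular set has $\mathcal{H}^n$-measure zero, so $\mathcal{H}^n(B(x,r)\cap Y)\geq\mathcal{H}^n(B(x,r)\cap X(\delta_k))$, and Ambrosio--Kirchheim's lower bound $\|T\|\geq\lambda_n\,\mathcal{H}^n\rstr\set(T)$ (Lemma~\ref{lemma-weight}) combined with the noncollapsing volume estimate then gives positive lower density at singular points as well. This second mechanism is entirely absent from your proposal. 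A secondary remark: your closing assertion that $\bar Y\setminus Y$ has $\mathcal{H}^n$-measure zero is not justified and is not needed for the statement as given.
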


In Example \ref{ex-oneSpline} we construct a 
sequence of manifolds which have GH and SWIF limits that do not agree and that satisfies all the conditions of Theorem \ref{thm-GH=IF}, except $X_{\partial} \subset Y$.    In Example \ref{ex-oneSplineInside} we construct a 
sequence of manifolds which have GH and SWIF limits that do not agree and that satisfies all the conditions of Theorem \ref{thm-GH=IF}, except the Ricci bound.

In \cite{Anderson-2004}, Anderson-Katsuda-Kurylev-Lassas-Taylor prove that a sequence of manifolds with bounds on various injectivity radii and on diameter as well as two sided Ricci curvature bounds on the manifolds and their boundaries, one has a subsequence converging in the 
$C^{1,\alpha}$ sense.   Removing half of these hypotheses we can prove GH and SWIF convergence of the manifolds (Proposition~\ref{prop-injectivity}).   Recall that the boundary injectivity radius of $p \in \bdry M$ is defined by 
\be
i_\bdry(p)= \inf\{t \,|\, \gamma_{p} \,\text{stops minimizing at t}\},
\ee
where $\gamma_p$ is the geodesic in $M$ such that $\gamma_p'(0)$  is the inward unitary normal tangent vector at $p$. The boundary injectivity radius of $M$ is defined by 
\be
i_\bdry(M)=\inf\{i_\bdry(p)\,|\,p \in \bdry M\}.
\ee
Inside we prove that a uniform bound on the boundary injectivity radius of the manifolds implies that
the diameter bounds in (\ref{eq-PerSorClass1}) can be reduced to a single diameter bound.  In addition, we prove GH convergence of the boundaries, (\ref{eq-ConvBound}), and $X_\partial \subset \bar{Y}$.  Combining this with our theorems above we obtain:

\begin{thm}\label{thm-injectivity}
Let $n\in \N$ and $\delta,D,V,\theta, \iota >0$, $\iota > \delta$. Suppose that $(M_j,g_j)$ is a sequence of $n$-dimensional compact oriented manifolds with boundary that satisfy 
\be\label{eq-RicVolDiam}
\ric(M_j) \geq 0, \,\, \vol(M_j)\le V,\,\, \vol(\bdry M_j) \leq A\,\,\diam(M_j^\delta,{d_{M_j^\delta}})\le D,
\ee
\be\label{eq-theta} 
\exists q\in M_j^\delta\,\, \text{such that}\,\,\vol(B(q,\delta)) \ge \theta\delta^n,
\ee
where $B(q,\delta)$ is the ball in $M_j$ with center $q$  and radius $\delta$ and
\be\label{eq-bouInjrad} 
\,i_\bdry(M_j) \geq \iota.
\ee
Then there is a subsequence such that
\be 
(M_{j_k}, d_{j_k}) \GHto (X,d_X),
\ee 
where $X$ is the closure of an $\mathcal H^n$ countably rectifiable metric space. 
\end{thm}

In Example \ref{ex-oneSpline} we construct a sequence of manifolds which has a GH limit $X$ 
which is not the closure of a countably rectifiable metric space. The sequence satisfies all the conditions of Theorem \ref{thm-injectivity}, except that the boundary injectivity radii do not have a positive uniform lower bound.  In Example \ref{ex-injFail} we construct a sequence of manifolds that satisfies all the conditions of Theorem \ref{thm-injectivity}. 

Our results concerning sequences of Riemannian
manifolds with boundary are consequences of 
the next two theorems concerning
sequences of metric spaces and integral current spaces. Let $(X,d)$ be a metric space. In \cite{PerSor-2013}, the author and Sormani defined the boundary of a metric space to be 
\be\label{defn-bdry}
\bdry X= \bar{X} \setminus X,
\ee
 where $\bar{X}$ is the metric completion of $X$. This agrees with the notion of the boundary of a manifold with boundary if one takes $X$ to be interior of the manifold.    
 
%On an integral current space, $(X,d,T)$, the boundary is defined as
%\be \label{defn-bdry-2}
%\partial(X,d,T)=(\set(\partial T), d, \partial T)
%\textrm{ so that } \partial X= \set(\partial T)\subset \bar{X}
%\ee
% which also agrees with the boundary of
% a manifold.   So when one has an integral current space,
% $(X,d,T)$, in order to make the two definitions of boundary
% given in (\ref{defn-bdry}) and (\ref{defn-bdry-2}) coincide, we consider
% \be \label{coincide}
% X'=\bar{X}\setminus \partial X \textrm{ and then } \partial X= \partial X' \textrm{ and } \bar{X}=\bar{X'}.
% \ee 
 For $\delta >0$ let 
\be 
X^{\delta} :=\{ \, x\in X \,|\, d(x,\bdry X) >\delta\}
\ee
be the $\delta$-inner region of $X$. 

In prior work of the author with Sormani \cite{PerSor-2013}, applying
Gromov's Embedding Theorem \cite{Gromov-groups}, it was proven  that given a sequence $\{\delta_i\}$ decreasing to zero and a sequence of compact metric spaces with boundary that converge in $GH$ sense, $(X_j, d_j) \GHto (X,d_X)$, there is a subsequence
$\{j_k\}$ and compact subspaces 
\be
X(\delta_i) \subset X \textrm{ and } X_\bdry \subset X
\ee 
such that 
\be 
(\overline{X^{\delta_i}_{j_k}}, d_{j_k}) \GHto (X(\delta_i),d_X)
\ee
for all $i$ and
\be 
(\bdry X_{j_k}, d_{j_k}) \GHto (X_\bdry,d_X). 
\ee
In Theorem \ref{thm-GHconvergence1} we prove the converse:

\begin{thm}\label{thm-GHconvergence1}
Let $(X_j,d_j)$ be a sequence of precompact length metric spaces with precompact boundary.  Suppose that there is a decreasing sequence  $\{\delta_i\}_{i=1}^\infty \subset \R$ that converges to zero such that $X_j^{\delta_i} \neq \emptyset$ and
$\{(\overline{X_j^{\delta_i}},d_j)\}_{j=1}^{\infty}$ 
converges in GH sense for all $i$ to some compact metric space $(X(\delta_i),d_{X(\delta_i)})$,
\be \label{eq-Xdeltaconv} 
(\overline{X_j^{\delta_i}},d_j) \GHto (X(\delta_i),d_{X(\delta_i)}).
\ee
Suppose that there is a compact metric space $(X_\bdry,d_\bdry)$ such that
\be 
(\bdry X_j,d_j) \GHto (X_\bdry,d_\bdry).
\ee 
Then a subsequence of $\{(\bar X_j,d_j)\}_{j=1}^{\infty}$ converges in GH sense.
\end{thm}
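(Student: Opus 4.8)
The plan is to verify the two conditions of Gromov's compactness theorem for $(\bar X_j,d_j)$: a family of compact metric spaces is precompact in the Gromov--Hausdorff topology precisely when the diameters are uniformly bounded and the family is uniformly totally bounded, i.e.\ for every $\varepsilon>0$ there is $N(\varepsilon)\in\N$ so that every member is covered by $N(\varepsilon)$ balls of radius $\varepsilon$. Since each $X_j$ is precompact, its completion $\bar X_j$ is compact; and since $X_j$ is a length space, so is $\bar X_j$, so any two of its points are joined by a continuous path of length arbitrarily close to their distance. I will use repeatedly the elementary fact that a GH-convergent sequence of compact metric spaces has convergent (hence uniformly bounded) diameters and is uniformly totally bounded, applied both to $(\overline{X_j^{\delta_i}},d_j)$ and to $(\bdry X_j,d_j)$. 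The geometric input is the decomposition $\bar X_j = X_j^{\delta_i}\,\cup\,\{x\in\bar X_j:\dist(x,\bdry X_j)\le\delta_i\}$, whose first piece is comparable to $\overline{X_j^{\delta_i}}$ and whose second piece is contained in the $\delta_i$-neighborhood of $\bdry X_j$; in both estimates the finitely many small $j$ are absorbed by enlarging constants, since each $\bar X_j$ is compact.

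For the uniform diameter bound, fix $i=1$, set $\delta:=\delta_1$, and let $D,D_\bdry$ satisfy $\diam(\overline{X_j^{\delta}},d_j)\le D$ and $\diam(\bdry X_j,d_j)\le D_\bdry$ for $j$ large. Given $p,q\in\bar X_j$: if both lie in $X_j^{\delta}$ then $\dist(p,q)\le D$; if both satisfy $\dist(\cdot,\bdry X_j)\le\delta$, pick nearby boundary points and use the triangle inequality to get $\dist(p,q)\le 2\delta+D_\bdry$. In the remaining case $p\in X_j^{\delta}$ and $\dist(q,\bdry X_j)\le\delta$, pick $q'\in\bdry X_j$ with $\dist(q,q')$ nearly $\dist(q,\bdry X_j)$ and a path $\sigma$ in $\bar X_j$ from $p$ to $q'$. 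The map $t\mapsto \dist(\sigma(t),\bdry X_j)$ is continuous, exceeds $\delta$ at $p$ and equals $0$ at $q'$; let $t_0$ be the first time it takes the value $\delta$. Then $w:=\sigma(t_0)$ is a limit of points of $X_j^{\delta}$, so $w\in\overline{X_j^{\delta}}$, and $\dist(w,\bdry X_j)=\delta$ forces $\dist(w,q')\le\delta+D_\bdry$ via the triangle inequality through a closest boundary point of $w$. Hence $\dist(p,q)\le\dist(p,w)+\dist(w,q')+\dist(q',q)\le D+D_\bdry+2\delta$, and in every case $\diam(\bar X_j,d_j)\le D+D_\bdry+2\delta$ for $j$ large.

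For the uniform covering bound, fix $\varepsilon>0$ and choose $i$ with $\delta_i<\varepsilon/2$. For $j$ large, $\overline{X_j^{\delta_i}}$ is covered by $N_1=N_1(\varepsilon,i)$ balls of radius $\varepsilon/2$ and $\bdry X_j$ is covered by $N_2=N_2(\varepsilon)$ balls $B(y_k,\varepsilon/2)$ with $y_k\in\bdry X_j$. A point $x\in\bar X_j$ either lies in $X_j^{\delta_i}\subset\overline{X_j^{\delta_i}}$, hence in one of the first $N_1$ balls, or satisfies $\dist(x,\bdry X_j)\le\delta_i$, hence lies within $\delta_i+\varepsilon/2<\varepsilon$ of some $y_k$. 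Thus $\bar X_j$ is covered by $N_1+N_2$ balls of radius $\varepsilon$ for $j$ large, and Gromov's compactness theorem yields a GH-convergent subsequence of $(\bar X_j,d_j)$.

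The step I expect to be the main obstacle is the mixed case of the diameter bound, where a point near $\bdry X_j$ must be controlled against the inner region $\overline{X_j^{\delta}}$: since $x\mapsto\dist(x,\bdry X_j)$ is not monotone along paths one cannot simply "walk out" to the inner region, so the argument rests on the length-space structure of the completion $\bar X_j$, the first-hitting-time point $w$ at level $\delta$, and the triangle-inequality bound $\dist(w,q')\le\delta+D_\bdry$ there; one also has to check the standard but essential fact that the metric completion of a length space is again a length space.
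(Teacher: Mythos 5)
Your proof is correct, and for the uniform covering bound your argument is essentially the same as the paper's: decompose $\bar X_j$ into $X_j^{\delta_i}$ plus the $\delta_i$-collar of $\bdry X_j$, cover the first piece using GH-convergence of the inner regions and the second using GH-convergence of the boundaries (the paper records the collar step as Lemma~\ref{lem-cover2cov}), then invoke Gromov's compactness criterion. The genuine difference is in the diameter bound. The paper observes that once the covering bound is in hand, the diameter bound comes for free: in a connected length space covered by $N$ balls of radius $r$ one has $\diam \leq 2rN$, so $\diam(\bar X_j)\le 2\delta_1 N(\delta_1)$ with no further geometry. You instead prove the diameter bound independently by a three-case analysis and, in the mixed case, a first-hitting-time argument along a path of nearly minimal length to produce a transfer point $w\in\overline{X_j^\delta}$ with $\dist(w,\bdry X_j)=\delta$. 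Your argument is sound (including the observation that $\sigma(t)\in X_j^\delta$ for $t<t_0$, so $w\in\overline{X_j^\delta}$, and the use of the fact that the completion of a length space is a length space), but it tackles head-on exactly the delicate interpolation between the inner region and the boundary that the paper's one-line deduction sidesteps entirely. If you want a shorter proof, replace your diameter section with the length-space covering bound; what your version buys is an explicit diameter estimate $D+D_\bdry+2\delta$ in terms of the limiting diameters rather than the cruder $2\delta_1 N(\delta_1)$.
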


The next theorem is the key ingredient to prove our theorems in which 
both GH and SWIF limits agree.  In particular it is
applied to prove Theorem \ref{thm-GH=IF}  and Theorem \ref{thm-injectivity}.

\begin{thm}\label{thm-generalizeConv1}
Let $( X_j,d_j, T_j)$ be precompact integral current spaces. Suppose that there exist a compact metric space $(X,d)$  and a non zero integral current space $(Y\subset X,d,T)$ 
such that 
\be 
(\bar X_j, d_j) \GHto (X,d) \,\, \text{and}\,\, (X_j, d_j, T_j) \Fto (Y,d,T).
\ee
and there is a subsequence such that
\be 
(\partial X_{j_k}, d_{j_k}) \GHto (X_\partial ,d)
\ee
where $\partial X_{j_k}$ is defined as in (\ref{defn-bdry}).
Suppose in addition that
\be\label{eq-completionY}
X_{\partial} \subset \bar{Y},
\ee
and
there is a decreasing sequence $\delta_i\to 0$ such that the inner regions converge
 \be
(\overline{X_{j_k}^{\delta_i}}, d_{j_k}) \GHto X(\delta_i) 
\ee
with
\be\label{inside-inside}
X(\delta_i) \subset Y \qquad \forall i \in \mathbb{N}.
\ee
Then $X=\bar{Y}$.
\end{thm}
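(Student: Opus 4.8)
The plan is to show the reverse inclusion $X \subset \bar{Y}$; since $Y \subset X$ is given and $X$ is complete (being compact), this forces $\bar{Y} \subset X$ and hence $X = \bar{Y}$. So fix $x \in X$ and a small $\vare > 0$; I want to produce a point of $Y$ within distance $\vare$ of $x$. The dichotomy to exploit is whether $x$ is ``close to the boundary'' or ``deep in the interior,'' measured against the sequence $\delta_i \to 0$. More precisely, I would argue: either $x$ lies in (the isometric image inside $X$ of) some inner-region limit $X(\delta_i)$, or $x$ can be approximated by points of the boundary limit $X_\partial$. In the first case, hypothesis (\ref{inside-inside}) gives $x \in X(\delta_i) \subset Y$ directly. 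In the second case, hypothesis (\ref{eq-completionY}) gives $X_\partial \subset \bar{Y}$, so a point of $X_\partial$ near $x$ is itself approximable by points of $Y$, and we win by the triangle inequality.

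To make the dichotomy precise I would first set up a single metric space realizing all the convergences simultaneously. Using the compactness of the various GH limits and Gromov's Embedding Theorem (as invoked in the paragraph preceding Theorem \ref{thm-GHconvergence1}), pass to a common subsequence and embed all of the $\bar{X}_{j_k}$, the limit $X$, the boundary limits $\partial X_{j_k}$ and $X_\partial$, and the inner-region spaces $\overline{X_{j_k}^{\delta_i}}$ and $X(\delta_i)$ isometrically into a single compact metric space $Z$, so that the GH convergences become Hausdorff convergences in $Z$ and the Fréchet/SWIF convergence $(X_{j_k},d_{j_k},T_{j_k}) \Fto (Y,d,T)$ also takes place there (this is exactly the setup behind Theorem \ref{thm-FlatInGH} / Theorem \ref{thm-SWgh=if} quoted above, where $Y \subset X$). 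Inside $Z$, the key observation is that $\bar{X}_{j_k} = \overline{X_{j_k}^{\delta_i}} \cup \{w \in \bar X_{j_k} : d_{j_k}(w, \partial X_{j_k}) \le \delta_i\}$; taking Hausdorff limits in $k$, for each fixed $i$ we get
\be
X \subset X(\delta_i) \cup N_{\delta_i}(X_\partial),
\ee
where $N_{\delta_i}$ denotes the closed $\delta_i$-neighborhood in $Z$ — here one uses that a point within $\delta_i$ of $\partial X_{j_k}$ limits to a point within $\delta_i$ of $X_\partial$, and that a point of $X_{j_k}^{\delta_i}$ limits into $X(\delta_i)$.

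Now chase the dichotomy. Given $x \in X$ and $\vare > 0$, choose $i$ with $\delta_i < \vare/2$. By the displayed inclusion, either $x \in X(\delta_i)$, in which case $x \in Y$ by (\ref{inside-inside}) and we are done, or $d(x, X_\partial) \le \delta_i < \vare/2$; pick $p \in X_\partial$ with $d(x,p) < \vare/2$, then by (\ref{eq-completionY}) pick $q \in Y$ with $d(p,q) < \vare/2$... wait, more carefully: $X_\partial \subset \bar Y$ means $d(p, Y) = 0$, so there is $q \in Y$ with $d(p,q) < \vare/2$, giving $d(x,q) < \vare$. In either case $d(x, Y) < \vare$; since $\vare$ was arbitrary, $x \in \bar Y$. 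Hence $X \subset \bar Y$, and combined with $Y \subset X$ and compactness (hence completeness) of $X$ we conclude $X = \bar Y$.

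The main obstacle I anticipate is the bookkeeping in the first paragraph of the argument: making rigorous the claim that the Hausdorff limit of $\overline{X_{j_k}^{\delta_i}}$ inside $Z$ really is (an isometric copy of) $X(\delta_i)$ sitting inside $X$, and likewise that the Hausdorff limit of the near-boundary collars of $\bar X_{j_k}$ is contained in $N_{\delta_i}(X_\partial)$. This requires care because a point $w_k \in \bar X_{j_k}$ with $d_{j_k}(w_k, \partial X_{j_k}) \le \delta_i$ has, for each $k$, a witness $b_k \in \partial X_{j_k}$ with $d_{j_k}(w_k,b_k) \le \delta_i$; along a further subsequence $b_k \to b \in X_\partial$ and $w_k \to w$, and lower semicontinuity of distance under Hausdorff convergence gives $d(w,b) \le \delta_i$, so $w \in N_{\delta_i}(X_\partial)$. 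The analogous statement for inner regions is precisely the content of the quoted result from \cite{PerSor-2013} that $(\overline{X_{j_k}^{\delta_i}},d_{j_k}) \GHto X(\delta_i)$ with $X(\delta_i) \subset X$. Once these two facts are in hand, the covering inclusion and the dichotomy chase are routine.
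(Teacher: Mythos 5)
Your proposal is correct and is essentially the same argument as the paper's: both proofs establish $X\subset\bar Y$ via the same dichotomy — a point of $X$ either lies in some $X(\delta_i)\subset Y$, or can be approximated arbitrarily well by points of $X_\partial\subset\bar Y$, the second case coming from the fact that the witnessing sequence $x_{j_k}$ eventually leaves every inner region and so is at distance at most $\delta_i$ from $\partial X_{j_k}$. The paper phrases the dichotomy directly on a sequence $x_j\to x$ (does it stay in $X_j^{\delta_i}$ for infinitely many $j$ for some fixed $i$, or not?) and concludes $x\in Y$ or $x\in X_\partial$, whereas you phrase it as the covering inclusion $X\subset X(\delta_i)\cup N_{\delta_i}(X_\partial)$ followed by an $\varepsilon$-chase; these are cosmetically different but the same idea, and your embedding-into-$Z$ setup is exactly the implicit framework the paper uses when it speaks of a sequence $x_j\in\bar X_j$ converging to $x$.
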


In Example \ref{ex-oneSplineInside} we construct a 
sequence of manifolds which have GH and SWIF limits that do not agree that satisfies all the conditions of Theorem \ref{thm-generalizeConv1}, except that $X^{\delta} \subset Y$. In Example \ref{ex-oneSpline} we describe a sequence of regions in Euclidean space with GH and SWIF limits that do not agree. This sequence satisfies all the conditions of 
Theorem \ref{thm-generalizeConv1}, except $X_\bdry \subset Y$.  

We now provide an outline for the paper. We begin with two sections reviewing key theorems needed to prove the results in this paper.  In Section \ref{sec-GH} we review GH convergence
as defined in Gromov's book \cite{Gromov-metric}. We state prior 
GH convergence results for manifolds with boundary proven by Sormani and the author \cite{PerSor-2013}. In Subsection \ref{sbs-ChCo} we state Colding's volume estimate for balls GH close to balls in Euclidean space \cite{Colding-volume}, and some of Cheeger-Colding's results about GH limits of non collapsed sequences of Riemannian manifolds with Ricci curvature bounded below \cite{ChCo-PartI}.  

In Section \ref{sec-IF} we go over Ambrosio-Kirchheim's  
results concerning integral currents \cite{AK}. We then review Sormani-Wenger's integral current spaces, SWIF distance and some of their theorems. We present a simplified proof of Sormani-Wenger's GH=SWIF theorem for manifolds with no boundary \cite{SorWen1} (cf. Theorem \ref{thm-SWgh=if}).  This simplified proof will be adapted later to prove Theorem \ref{thm-GH=IF}.

In Section \ref{sec-ConvergenceMet}, we prove the new convergence theorems for metric spaces stated above: Theorem \ref{thm-GHconvergence1} and Theorem \ref{thm-generalizeConv1}.   We also present some important examples.  There we see that the sequence of metric spaces described in Example \ref{ex-multipleSplines} satisfies all the conditions of Theorem\ref{thm-GHconvergence1}, except the GH convergence of the boundaries. 
Meanwhile, the sequence of metric spaces described in Example \ref{ex-multipleSplines2} satisfies all the conditions of Theorem\ref{thm-GHconvergence1}, except that there is no GH convergent subsequence of inner regions $(M^\delta_j, d_j)$
for any $\delta$ small. In both examples the conclusion of 
Theorem\ref{thm-GHconvergence1} does not hold.  To prove the importance of our conditions in Theorem\ref{thm-generalizeConv1}, we present two examples. In Example \ref{ex-oneSplineInside} we describe a sequence for which the GH limit of the sequences of inner regions is not contained in the SWIF limit and in Example \ref{ex-oneSplineInside} we show a sequence for which the GH and SWIF limit do not agree since the GH limit of the boundaries is not contained in the SWIF limit. 

In Section \ref{sec-ConvergenceMet} we also prove Theorem \ref{thm-GHCollapsing} which deals with the case when the GH limit of $(X_j,d_j)$ agrees with the GH limit of $(\bdry X_j,d_j)$. In Example \ref{ex-collapsingCylinders} we describe a sequence of 3-dimensional cylinders $X_j \subset \R^3$ such that $\{X_j\}$ and $\{\bdry X_j\}$ GH converge to a segment. Notice that if the Hausdorff dimension of the GH limit drops then the GH limit cannot agree with the SWIF  limit. In Lemma \ref{lem-IntPtX} we characterize the points of $X \setminus X_\bdry$.

In Section \ref{sec-ConvergenceMan} we prove our new theorems about limits of Riemannian manifolds with boundary (Theorem \ref{thm-GHconvergence2}, Theorem \ref{thm-GH=IF}
and Theorem \ref{thm-injectivity}) and present key examples related to these theorems.  In Subsection \ref{sbs-GHconvergence2} we prove Theorem \ref{thm-GHconvergence2} by applying Theorem \ref{thm-GHconvergence1}. We note that Example \ref{ex-oneSpline} satisfies all the conditions of Theorem \ref{thm-GHconvergence2}, hence it has a GH limit.

In Subsection \ref{sbs-GH=IF} we prove Theorem \ref{thm-GH=IF} by adapting the simplified proof of Sormani-Wenger's GH=IF Theorem for manifolds with no boundary \cite{SorWen2}.  See Section \ref{sbs-SWgh=if} for details of their proof.  We also see that the sequence described in Example \ref{ex-oneSpline} satisfies all the conditions of Theorem \ref{thm-GH=IF}, except Equation (\ref{eq-completionY}) and so the GH limit does not coincide with the SWIF limit.  Then we provide two examples, Example \ref{ex-bdryGHandIF} and Example \ref{ex-bdryGH=IF}, in which the sequences satisfy all the conditions of Theorem \ref{thm-GH=IF}. Additionally, in these two examples the GH limit of the sequences of boundaries, $X_\bdry$, do not agree with the SWIF limit of the boundaries, $(Y_\bdry, d, T_\bdry)$, showing that we cannot replace Equation (\ref{eq-completionY}) by $X_\bdry=Y_\bdry$.  

In Subsection \ref{sbs-injectivity} we prove Theorem \ref{thm-injectivity}. To prove it we will apply Theorem \ref{thm-GH=IF}.  To do so, we first prove uniform diameter bounds for sequences of inner regions, Lemma \ref{lem-diameters}, and the GH convergence of the sequence of boundaries, Lemma \ref{thm-bdryConv}. Then we show that $X_\bdry \subset \bar Y$. From Example \ref{ex-oneSpline} we notice that a positive lower bound on the injectivity radii is not necessary for the GH convergence. In Example \ref{ex-injFail} we present a sequence that satisfies all the conditions of Theorem \ref{thm-injectivity}, except the positive boundary injectivity radii bound. In this example the GH limit and the SWIF limit agree showing that the hypothesis in the boundary injectivity radii is stronger than necessary. 

In Section \ref{sec-ConvergenceBoun} we prove GH convergence of sequences of boundaries, Theorem \ref{thm-bdryConv}.  
In order to prove this theorem, in Proposition  \ref{prop-epsCovers} we show that the GH convergence of $(\bdry M_j, d_{\bdry M_j})$ implies the convergence of $(\bdry M_j, d_j)$ and in Proposition \ref{prop-RiciBounded} we obtain a uniform Ricci curvature bound on the boundaries. Notice that in Theorem \ref{thm-GHconvergence2} and Theorem \ref{thm-GH=IF} one of the hypothesis is the convergence in GH sense of the sequence $(\bdry M_j, d_j)$, Equation (\ref{eq-ConvBound}) and Equation (\ref{eq-ConvBound-2}). So Theorem \ref{thm-bdryConv} can be used in these cases. 

%Knox proved that the class of Riemannian manifolds satisfying (\ref{Knox-class-1}) and (\ref{Knox-class-2}) is precompact in the $L^{1,p}$ topology for any $p < \infty$, \cite{Knox-2012}. In Section \ref{sec-applications} we prove that this class is precompact with respect to GH and SWIF convergence and that the limits agree, Theorem \ref{thm-KnoxIF=GH}. We do this by applying Knox's boundary injectivity raddi uniform lower bound as well as a uniform lower bound in the volume of balls not touching the boundary, Theorem \ref{thm-knoxVol}. We also apply the upper volume and diameter bounds estimated by Wong \cite{Wong-2008}, cf Theorem \ref{thm-WongVol}.

%\rp{
%We have to show that the subsequences of inner-regions converge since we do not have the diameter estimate as in Theorem Injectivity. The convergence of the boundaries can be obtained in the same way as in the Theorem injectivity. To prove that the volume of the boundaries is bounded above we use Bishop-Gromov volume comparison, the estimate of the diameter of the boundaries and the estimate of the sectional curvature of the boundaries. 
%Then we prove that X is contained in Y. 
%Due to the estimate calculated by Knox 
%it is possible to prove that both limits are equal unlike Theorem injectivity where the volume estimate depends goes to zero as the balls approach the boundary. }

I would like to thank my doctoral advisor, Professor Sormani, for introducing me to these notions of convergence and helping with expository aspects of this paper. I also would like to thank Professors Anderson, Lawson and Fukaya for their excellent courses and their support. 

I would like to thank Professor Villani for inviting me to present at the Optimal Transport Reading Seminar at MSRI and Professor Gigli for his excellent courses on this subject at MSRI and at Bonn as well.

I want to thank Professors Searle, Plaut and Wilkins for providing me with the opportunity to speak at the Smoky Cascade Geometry Conference at Knoxville.  I would like to thank Notre Dame University for providing me with the opportunity to speak at the Felix Klein Seminar. I want to thank Professor Nabutovsky for inviting me to give a talk at the University of Toronto. I thank the American Mathematical Society that provided me with the opportunity to talk at the Spring Southwest Sectional Meeting in 2014. 
   
\tableofcontents

%%%%%%%%%%%%%%%%%%%%%%%%%%%%%%%%%%%%%%%%%%%%%%%%%%%%%%%%%%%%%%%%%%

%\newpage
\section{A Review of GH Limits}\label{sec-GH}

In this section we list GH convergence results that will be used in the next sections
including prior published results of the author with Sormani as well as work of
Gromov and Cheeger-Colding.
In Subsection \ref{sbs-GHC} we define Gromov-Hausdorff distance and state Gromov's compactness theorem and its converse; Theorem \ref{thm-GromovCompactness} and \ref{thm-ConverseGromov}. In subsection \ref{sbs-PerSor} we review the GH compactness theorems for $\delta$-inner regions of manifolds with boundary proven by the author and Sormani in \cite{PerSor-2013}, Theorem \ref{thm-GHdelta} and \ref{thm-GHdeltas}.   
In Subsection \ref{sbs-ChCo} we state Colding's theorem about the volume of balls being close to the volume of balls in $\R^n$, provided the balls are GH close and the Ricci curvature is bounded below \cite{Colding-volume}, cf. Theorem \ref{thm-Colding-volume}. We also state Cheeger-Colding's theorem \cite{ChCo-PartI} cf. Theorem \ref{thm-RegularSet} about the singular set of the GH limit of a noncollapsing sequences of Riemannian manifolds with curvature bounded below having zero Hausdorff measure and the regular points having all tangent cones of the maximal dimension. These results will be applied to prove that the GH limit agrees with the SWIF limit in Theorem \ref{thm-GH=IF}.

\subsection{Gromov-Hausdoff Convergence}\label{sbs-GHC}
Here we introduce Gromov-Hausdorff convergence.  A detailed exposition see Burago-Burago-Ivanov \cite{BBI} and Gromov \cite{Gromov-metric}.

The Hausdorff distance in a complete metric space $Z$, $d^Z_H$, between two subsets $A,B \subset Z$ is defined as
\be
d_{H}^Z\left(A,B\right) = \inf\{ \varepsilon>0: A \subset T_\varepsilon\left(B\right) \textrm{ and } B \subset T_\varepsilon\left(A\right)\}.
\ee
Here, $T_\varepsilon(A)$ denotes the $\varepsilon$ neighborhood of $A$.

\begin{defn}[Gromov]\label{defn-GH} 
Let $\left(X_i, d_{X_i}\right)$, $i=1,2$, be two metric spaces. The Gromov-Hausdorff distance between them is defined as
\be \label{eqn-GH-def}
d_{GH}\left(X_1,X_2\right) = \inf  \, d^Z_H\left(\varphi_1\left(X_1\right), \varphi_2\left(X_2\right)\right)
\ee
where $Z$ is a complete metric space and $\varphi_i: X_i \to Z$ are distance preserving maps. 
\end{defn}

The above function is symmetric and satisfies the triangle inequality.
It is a distance when considering compact metric spaces.

\begin{thm}[Gromov]\label{thm-GromovCompactness}
Let $\left(X_j, d_j\right)$ be a sequence of compact metric spaces. If there exist $D$ and $N:(0,\infty) \to \N$  such that for all $j$ 
\be   
\diam(X_j) \leq D
\ee
and for all $\varepsilon$ there are $N(\varepsilon)$  $\varepsilon$-balls that cover $X_j$, then there exist a compact metric space $(X,d_X)$ and a subsequence such that 
\be 
\left(X_{j_k}, d_{j_k}\right) \GHto \left(X,d_X\right).
\ee 
\end{thm}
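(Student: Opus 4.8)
The plan is to run the classical diagonal--sequence argument. The idea is to extract a subsequence along which a countable \emph{skeleton} of finite nets in the $X_j$ has all of its mutual distances converging, to assemble these limiting distances into a pseudometric on a fixed countable index set, and to take $(X,d_X)$ to be the metric completion of the associated quotient metric space. The uniform covering hypothesis is exactly what keeps the skeleton finite at each scale and what forces the limit to be totally bounded, hence compact.

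Concretely, for each $k\in\N$ I would fix a finite $(1/k)$-net in $X_j$; by hypothesis it can be chosen with at most $N(1/k)$ points, and after padding with repetitions and relabelling I may assume it has exactly $m_k:=N(1/k)$ points, say $\{x^j_{k,1},\dots,x^j_{k,m_k}\}$. All the numbers $d_j(x^j_{k,a},x^j_{k',b})$ lie in $[0,D]$, and there are only countably many quadruples $(k,a,k',b)$, so a diagonalization yields a subsequence $\{j_m\}$ along which $d_{j_m}(x^{j_m}_{k,a},x^{j_m}_{k',b})\to\rho\big((k,a),(k',b)\big)$ for every quadruple, each limit in $[0,D]$. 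Passing the triangle inequality to the limit shows $\rho$ is a pseudometric on $I:=\{(k,a):k\in\N,\ 1\le a\le m_k\}$; let $Y$ be the metric space obtained by collapsing $\rho$-null pairs, and let $(X,d_X)$ be the metric completion of $Y$.

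Next I would check compactness of $X$. It is complete by construction, so it suffices to see that $Y$ is totally bounded. Fix $k$ and any $(k',b)\in I$: in $X_{j_m}$ the point $x^{j_m}_{k',b}$ lies within $1/k$ of some point of the $k$-th net, and since there are only $m_k$ choices, along a suitable sub-subsequence that choice is a fixed index $a$; since the whole sequence $d_{j_m}(x^{j_m}_{k',b},x^{j_m}_{k,a})$ converges, its limit $\rho\big((k',b),(k,a)\big)$ is $\le 1/k$. Thus the images in $Y$ of the level-$k$ indices form a $(1/k)$-net in $Y$ for every $k$, so $Y$ is totally bounded and $X$ is compact. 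For the convergence $(X_{j_m},d_{j_m})\GHto(X,d_X)$, given $\varepsilon>0$ I would pick $k$ with $3/k<\varepsilon$ and $m$ so large that $\big|d_{j_m}(x^{j_m}_{k,a},x^{j_m}_{k,b})-\rho\big((k,a),(k,b)\big)\big|<1/k$ for all $a,b\le m_k$, and then use the correspondence relating $x\in X_{j_m}$ to $z\in X$ whenever some level-$k$ index $(k,a)$ satisfies $d_{j_m}(x,x^{j_m}_{k,a})\le 1/k$ and $d_X(z,[(k,a)])\le 1/k$. Because the level-$k$ net is a $(1/k)$-net in $X_{j_m}$ and its image is a $(1/k)$-net in $X$, this is a genuine correspondence, and three applications of the triangle inequality bound its distortion by $5/k<2\varepsilon$; hence $d_{GH}(X_{j_m},X)\to 0$.

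This argument has no deep obstacle; the points requiring care are organizing the diagonal extraction over the countably many skeleton distances and checking that the converging finite nets genuinely descend to $\varepsilon$-nets of the limit. The latter is precisely what simultaneously gives compactness of $X$ and vanishing distortion of the correspondences, and it is where the uniform function $N(\cdot)$ --- rather than merely diameter control --- is used.
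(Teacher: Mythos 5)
Your argument is the standard textbook proof of Gromov's precompactness theorem (diagonal extraction over finite $\varepsilon$-nets, assembly of a limiting pseudometric on the countable skeleton, completion and total boundedness, and correspondence with distortion $O(1/k)$), and it is correct. The paper does not prove this statement at all --- it is cited as Theorem~\ref{thm-GromovCompactness} with attribution to Gromov and no proof supplied --- so there is nothing to compare against; your proof fills in exactly the classical argument one would find in Gromov's book or Burago--Burago--Ivanov.
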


\begin{defn}\label{defn-equibounded}
We say that a family, $\mathcal{F}$, of compact metric spaces is equibounded if there exists a function $N:(0,\infty) \to \N$ as in theorem \ref{thm-GromovCompactness}. For the purpose of clarity we will denote $N$ by $N(\,\cdot \, , \mathcal{F})$ when working with different families.
\end{defn}

\begin{thm}[Gromov]\label{thm-ConverseGromov}
Let $\{(X_j, d_j)\}$ be a sequence of compact metric spaces that converges in GH sense. Then $\{(X_j, d_j)\}$ is equibounded in the sense of Definition \ref{defn-equibounded} and, there is $D>0$ such that 
$\diam(X_j) \leq D $ for all $j$.
\end{thm}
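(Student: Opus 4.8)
The plan is to prove both conclusions (equiboundedness and a uniform diameter bound) directly from the definition of GH convergence, using the limit space $(X,d_X)$ as an intermediary. First I would note that GH convergence means there is a sequence $\varepsilon_j \to 0$ and, for each $j$, a complete metric space $Z_j$ together with distance-preserving maps $\varphi_j : X_j \to Z_j$ and $\psi_j : X \to Z_j$ with $d_H^{Z_j}(\varphi_j(X_j), \psi_j(X)) \le \varepsilon_j$. (Here I use that $d_{GH}(X_j, X) \to 0$ and that one may choose realizations; strictly, one takes $\varepsilon_j$ slightly larger than $d_{GH}(X_j,X)$.)

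For the diameter bound: since $X$ is compact, $D_0 := \diam(X) < \infty$. Fix $x, y \in X_j$. Pick $x', y' \in X$ with $d_{Z_j}(\varphi_j(x), \psi_j(x')) \le \varepsilon_j$ and similarly for $y$; this is possible because $\varphi_j(x)$ lies in the $\varepsilon_j$-neighborhood of $\psi_j(X)$. Then by the triangle inequality in $Z_j$ and the fact that $\varphi_j, \psi_j$ are distance-preserving,
\[
d_{X_j}(x,y) = d_{Z_j}(\varphi_j(x),\varphi_j(y)) \le d_{Z_j}(\psi_j(x'),\psi_j(y')) + 2\varepsilon_j = d_X(x',y') + 2\varepsilon_j \le D_0 + 2\varepsilon_j.
\]
Since $\varepsilon_j \to 0$, we have $\varepsilon_j \le 1$ for all $j$ large, and setting $D := D_0 + 2\max\{1, \sup_j \varepsilon_j\}$ (a finite quantity, as $\varepsilon_j \to 0$) gives $\diam(X_j) \le D$ for all $j$. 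The finitely many initial terms can be absorbed by enlarging $D$ since each $X_j$ is compact hence has finite diameter.

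For equiboundedness: given $\varepsilon > 0$, use compactness of $X$ to cover $X$ by $N_0(\varepsilon) := N(X, \varepsilon/3)$ balls of radius $\varepsilon/3$ centered at points $p_1, \dots, p_{N_0}$. For $j$ large enough that $\varepsilon_j < \varepsilon/3$, any point $\varphi_j(x) \in Z_j$ is within $\varepsilon_j$ of some $\psi_j(p_k)$, which in turn lies within $\varepsilon/3$ (in $Z_j$) of... — more carefully: for each $k$ choose (if it exists) a point $q_k^j \in X_j$ with $d_{Z_j}(\varphi_j(q_k^j), \psi_j(p_k)) \le \varepsilon_j$; such a $q_k^j$ exists because $\psi_j(X)$ lies in the $\varepsilon_j$-neighborhood of $\varphi_j(X_j)$. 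Then for arbitrary $x \in X_j$, pick $x' \in X$ with $d_{Z_j}(\varphi_j(x), \psi_j(x')) \le \varepsilon_j$, pick $p_k$ with $d_X(x', p_k) < \varepsilon/3$, and estimate $d_{X_j}(x, q_k^j) = d_{Z_j}(\varphi_j(x), \varphi_j(q_k^j)) \le \varepsilon_j + d_X(x', p_k) + \varepsilon_j < \varepsilon/3 + \varepsilon/3 + \varepsilon/3 = \varepsilon$. Hence the at most $N_0(\varepsilon)$ balls $B(q_k^j, \varepsilon)$ cover $X_j$ for all $j$ beyond some index $j_0(\varepsilon)$. For the finitely many $j < j_0(\varepsilon)$, each $X_j$ is compact and totally bounded, so it is covered by some finite number $M_j(\varepsilon)$ of $\varepsilon$-balls; taking $N(\varepsilon) := \max\{N_0(\varepsilon), M_1(\varepsilon), \dots, M_{j_0(\varepsilon)-1}(\varepsilon)\}$ yields the desired function, which is finite-valued on $(0,\infty)$.

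The only genuinely delicate point — and the step I would be most careful about — is the interchange of quantifiers hidden in choosing a single sequence of ambient spaces $Z_j$ and realizations: one must make sure that the definition of $d_{GH}$ as an infimum is turned into an actual near-optimal embedding for each $j$, and that $\varepsilon_j \to 0$ can indeed be taken. This is standard (one simply takes $\varepsilon_j = d_{GH}(X_j, X) + 1/j$ and an $\varepsilon_j$-near-optimal common embedding), but it is where the argument must be pinned down; everything after that is the routine triangle-inequality bookkeeping sketched above. The handling of the finitely many small-index terms is trivial but should be mentioned for completeness since the statement asks for the estimate "for all $j$".
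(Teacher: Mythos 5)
The paper does not give its own proof of this statement: Theorem~\ref{thm-ConverseGromov} is quoted as a background result of Gromov in the review Subsection~\ref{sbs-GHC}, with no argument supplied. So there is nothing to compare against, and the right question is simply whether your proof is correct — and it is. Your argument is the standard one: pick near-optimal realizations $\varphi_j:X_j\to Z_j$, $\psi_j:X\to Z_j$ with $d_H^{Z_j}(\varphi_j(X_j),\psi_j(X))\le\varepsilon_j\to 0$; the diameter bound $\diam(X_j)\le\diam(X)+2\varepsilon_j$ follows by projecting two points of $X_j$ to nearby points of $X$ via the Hausdorff estimate and using that $\varphi_j,\psi_j$ are isometric embeddings; for equiboundedness, transport an $\varepsilon/3$-net of $X$ into $X_j$ (pulling each center back to a nearby point of $\varphi_j(X_j)$, which exists again by the Hausdorff estimate) and the $\varepsilon_j+\varepsilon/3+\varepsilon_j<\varepsilon$ triangle-inequality bookkeeping gives an $\varepsilon$-cover of $X_j$ of cardinality $N(X,\varepsilon/3)$ once $\varepsilon_j<\varepsilon/3$; the finitely many small indices are handled by total boundedness of each compact $X_j$. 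You also correctly flag the quantifier issue of choosing a single near-optimal embedding per $j$, which is resolved exactly as you say. Two cosmetic remarks: since $\varepsilon_j\to 0$, the sequence $(\varepsilon_j)$ is bounded, so $D:=\diam(X)+2\sup_j\varepsilon_j$ already works for all $j$ without the $\max\{1,\cdot\}$ and without separately absorbing initial terms; and in the equiboundedness step the phrase ``(if it exists)'' is unnecessary since existence of $q_k^j$ is guaranteed by the inclusion $\psi_j(X)\subset T_{\varepsilon_j}(\varphi_j(X_j))$. Neither affects correctness.
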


\begin{thm}[Gromov in \cite{Gromov-81a}]\label{thm-GromovEmbedding}
Let $\{(X_j, d_j)\}$ be a sequence of compact metric spaces that converges in GH sense
to $(X_\infty,d_\infty)$. Then there is a compact metric space $(Z,d)$ and isometric embeddings $\varphi_j:X_j \to Z$ such that a subsequence $\{(\varphi_{j_k}(X_{j_k}),d)\}$ converges in Hausdorff sense to $(\varphi_\infty(X_\infty),d)$. 
\end{thm}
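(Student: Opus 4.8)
The plan is to realize the given Gromov--Hausdorff convergence \emph{inside a single compact metric space}, by amalgamating isometric copies of all the $X_j$ together with the limit $X_\infty$ along their common model $X_\infty$.

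\emph{Step 1 (from GH distance to admissible metrics).} Since $d_{GH}(X_j,X_\infty)\to 0$, I would first produce, for each $j$, a number $\varepsilon_j>0$ with $\varepsilon_j\to 0$ and a metric $\rho_j$ on the disjoint union $X_j\disjointunion X_\infty$ that restricts to $d_j$ on $X_j$, to $d_\infty$ on $X_\infty$, and satisfies $d_H^{(X_j\disjointunion X_\infty,\,\rho_j)}(X_j,X_\infty)<\varepsilon_j$. Such a $\rho_j$ is obtained from Definition~\ref{defn-GH}: embed $X_j$ and $X_\infty$ isometrically into a complete space $W_j$ with the Hausdorff distance between their images small, pull the metric of $W_j$ back to a pseudometric on $X_j\disjointunion X_\infty$, and, if the two images overlap, add a small constant to all cross-pairs to turn it into a metric (this changes the Hausdorff distance by the same small constant). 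Since $X_j$ and $X_\infty$ are compact they are disjoint closed subsets of the compact space $(X_j\disjointunion X_\infty,\rho_j)$, so $\inf\{\rho_j(x,z):z\in X_\infty\}>0$ for every $x\in X_j$.

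\emph{Step 2 (amalgamation).} Let $Z=\big(\bigsqcup_j X_j\big)\disjointunion X_\infty$ as a set and define $d$ on $Z$ by $d=d_j$ on $X_j\times X_j$, $d=d_\infty$ on $X_\infty\times X_\infty$, $d=\rho_j$ on $X_j\times X_\infty$, and for $i\neq j$, $x\in X_i$, $y\in X_j$,
\be
d(x,y)=\inf_{z\in X_\infty}\big(\rho_i(x,z)+\rho_j(z,y)\big).
\ee
Symmetry and positivity are immediate (for the last case $d(x,y)\ge\inf_{z\in X_\infty}\rho_i(x,z)>0$ when $x\neq y$, by Step 1). The triangle inequality is verified case by case according to which of the pieces $X_i,X_j,X_\infty$ the three points lie in; in each case, after replacing an infimum by a near-optimal $z\in X_\infty$, it reduces to the triangle inequality inside one $\rho_k$ together with the fact that each $\rho_k$ restricts to $d_\infty$ on $X_\infty$ --- i.e.\ going ``through $X_\infty$'' is consistent with the distances already fixed inside each piece.

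\emph{Step 3 (compactness and conclusion).} The inclusions $\varphi_j\colon X_j\hookrightarrow(Z,d)$ and $\varphi_\infty\colon X_\infty\hookrightarrow(Z,d)$ are then isometric embeddings, and $d_H^Z(\varphi_j(X_j),\varphi_\infty(X_\infty))=d_H^{\rho_j}(X_j,X_\infty)<\varepsilon_j\to 0$, so $\varphi_j(X_j)\to\varphi_\infty(X_\infty)$ in Hausdorff sense (for the whole sequence, which in particular yields the subsequence in the statement). To see $(Z,d)$ is compact I would check it is totally bounded --- given $\varepsilon>0$, take $N$ with $\varepsilon_j<\varepsilon/2$ for $j\ge N$; finitely many $\varepsilon$-balls centered in $X_\infty$ cover $X_\infty$ and every $X_j$ with $j\ge N$, and finitely many $\varepsilon$-balls cover each of $X_1,\dots,X_{N-1}$ --- and complete: a Cauchy sequence is either eventually trapped in a single $X_j$, or, using $d(x,X_\infty)<\varepsilon_j$ for $x\in X_j$, is asymptotic to a Cauchy sequence in the compact set $X_\infty$ and so converges to a point of $X_\infty\subset Z$. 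A complete, totally bounded space is compact. (If one prefers, simply pass to the metric completion of $(Z,d)$.) The only genuinely technical point is the metric verification in Step 2: one must make sure that the ``shortcuts through $X_\infty$'' defining cross-piece distances never contradict the intra-piece distances $d_j$ and $d_\infty$ fixed at the outset. This is a finite case analysis but has to be carried out with care; the remaining points (total boundedness, completeness, the Hausdorff estimate) are routine.
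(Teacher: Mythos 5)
The paper does not prove Theorem~\ref{thm-GromovEmbedding}; it is stated as a cited classical result of Gromov, so there is no in-paper argument to compare against. Your proposal is the standard proof of that result and is correct: Step~1 (admissible metrics on $X_j\disjointunion X_\infty$ realizing the GH distance up to $\varepsilon_j$), Step~2 (gluing all pieces along $X_\infty$ with cross-distances defined by passing through $X_\infty$), and the Hausdorff estimate $d_H^Z(\varphi_j(X_j),\varphi_\infty(X_\infty))=d_H^{\rho_j}(X_j,X_\infty)<\varepsilon_j$ all check out; the case analysis for the triangle inequality goes through exactly as you indicate because each $\rho_k$ restricts to $d_\infty$ on $X_\infty$. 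In fact you obtain convergence of the whole sequence, which is stronger than the subsequence statement in the paper. The only point I would tighten is the completeness dichotomy in Step~3: a Cauchy sequence could a priori visit infinitely many pieces without being ``eventually in one $X_j$'' and without its piece-indices visibly tending to infinity. This is repaired by observing that distinct pieces are at positive distance in $Z$ (since $d(x,y)\ge \operatorname{dist}_{\rho_i}(X_i,X_\infty)>0$ for $x\in X_i$, $y\notin X_i$, by compactness and disjointness), so at most one piece can contain infinitely many terms; any subsequence escaping to pieces of unbounded index is then handled by your $d(x,X_\infty)<\varepsilon_j$ estimate. Alternatively your fallback of passing to the metric completion (which is compact, being the completion of a totally bounded space) disposes of the issue entirely.
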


Whenever we have a GH converging sequence, we choose embeddings $\varphi_j$ as in the previous theorem. Then we consider $\{(X_{j_k},d_{j_k})\}$ to be our original sequence, $\{(X_j, d_j)\}$. We say that a sequence $x_j \in X_j$ converges to $x_\infty \in X_\infty$ if 
\be  
\varphi_j(x_j) \to \varphi_\infty(x_\infty).
\ee  

Moreover, using the following theorem we can say that a sequence $A_j \subset X_j$ GH converges to a set $A_\infty \subset X_\infty$. 

\begin{thm}[Blaschke]\label{thm-Blaschke}
Let $(Z,d)$ be a compact metric space and $A_j$ be a sequence of closed subsets of $Z$. Then, there is a subsequence $A_{j_k}$ that converges in Hausdorff sense. 
\end{thm}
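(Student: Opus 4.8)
The plan is to prove Blaschke's theorem by encoding each closed subset of $Z$ via its distance function and then invoking the Arzel\`a--Ascoli theorem. First I would dispose of the empty set: after passing to a subsequence we may assume either that every $A_j$ is empty, in which case $A_{j_k}=\emptyset$ converges in Hausdorff sense to $\emptyset$ and we are done, or that every $A_j$ is nonempty, which I assume henceforth. For nonempty closed $A\subset Z$ write $f_A(z)=d(z,A)=\inf_{a\in A}d(z,a)$. Each $f_A$ is $1$-Lipschitz and takes values in $[0,\diam(Z)]$, so the family $\{f_{A_j}\}_j$ is uniformly bounded and equicontinuous; since $Z$ is compact, the Arzel\`a--Ascoli theorem produces a subsequence $\{A_{j_k}\}$ with $f_{A_{j_k}}\to f$ uniformly on $Z$, where $f\colon Z\to[0,\diam Z]$ is $1$-Lipschitz. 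I then set $A=f^{-1}(0)$, which is closed because $f$ is continuous.

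Next I would check $A\neq\emptyset$. Choose $a_k\in A_{j_k}$ with $f_{A_{j_k}}(a_k)=0$ (attained since $A_{j_k}$ is closed in the compact space $Z$), and pass to a convergent subsequence $a_k\to a$. The estimate $|f(a)|\le|f(a)-f(a_k)|+|f(a_k)-f_{A_{j_k}}(a_k)|\le d(a,a_k)+\norm{f-f_{A_{j_k}}}_{\infty}\to 0$ gives $f(a)=0$, so $a\in A$.

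Then I would identify $f$ with $f_A$. Since $f$ is $1$-Lipschitz and vanishes on $A$, for any $z\in Z$ and any $a\in A$ we get $f(z)\le f(a)+d(z,a)=d(z,a)$, hence $f(z)\le d(z,A)$. For the reverse inequality, fix $z$; for each $k$ pick $b_k\in A_{j_k}$ realizing $d(z,b_k)=f_{A_{j_k}}(z)$, pass to $b_k\to b$, and argue as above that $b\in A$; then $d(z,A)\le d(z,b)=\lim_k d(z,b_k)=\lim_k f_{A_{j_k}}(z)=f(z)$. Thus $f=f_A$. Finally I would invoke the elementary identity $d^Z_H(B,C)=\sup_{z\in Z}|d(z,B)-d(z,C)|$ for nonempty closed $B,C\subset Z$, which follows directly from the definition of $d^Z_H$ via the triangle inequality: the uniform convergence $f_{A_{j_k}}\to f_A$ then translates exactly into $d^Z_H(A_{j_k},A)\to 0$, i.e. $A_{j_k}\to A$ in Hausdorff sense.

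The argument contains no genuinely hard step; the only points requiring care are the empty-set bookkeeping at the very start and the verification that the pointwise limit $f$ is the distance function to $A=f^{-1}(0)$ and not to some proper subset of it. This latter point is precisely why the ``reverse inequality'' step above, extracting a convergent subsequence of the near-minimizers $b_k$ and checking that the limit lies in $A$, is the crux of the proof.
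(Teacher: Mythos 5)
Your proof is correct. The paper states Blaschke's theorem without proof in its review section (Theorem \ref{thm-Blaschke} is cited as a classical result), so there is nothing in the paper to compare against; your argument is the standard proof via the $1$-Lipschitz distance functions $z\mapsto d(z,A_j)$, Arzel\`a--Ascoli, and the identity $d_H^Z(B,C)=\sup_{z\in Z}|d(z,B)-d(z,C)|$, and all the steps (nonemptiness of $f^{-1}(0)$ and the identification $f=f_A$ via convergent near-minimizers) check out.
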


For Riemannian manifolds with no boundary the following compactness theorem holds.  

\begin{thm}[Gromov]\label{thm-GromovRic}
Every sequence of $n$-dimensional compact Riemannian manifolds with diameter $\leq D$ and $\ric\geq (n-1)k$ has a GH convergent subsequence.
\end{thm}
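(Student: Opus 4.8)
The plan is to reduce the statement to Gromov's compactness criterion, Theorem~\ref{thm-GromovCompactness}. The diameter hypothesis $\diam(M_j)\le D$ is given outright, so the only thing to check is that the family of $n$-dimensional closed Riemannian manifolds with $\ric\ge (n-1)k$ and $\diam\le D$ is equibounded in the sense of Definition~\ref{defn-equibounded}: one must produce a single function $N:(0,\infty)\to\N$, independent of $j$, so that each $M_j$ is covered by $N(\vare)$ balls of radius $\vare$. Everything hinges on this uniform packing estimate.

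The tool I would use is the Bishop--Gromov relative volume comparison inequality: if $\ric(M)\ge (n-1)k$, then for every $p\in M$ the ratio $r\mapsto \vol(B(p,r))/V_k(r)$ is nonincreasing, where $V_k(r)$ denotes the volume of a metric ball of radius $r$ in the simply connected $n$-dimensional model space of constant curvature $k$. Fix $\vare>0$ and let $\{p_1,\dots,p_N\}\subset M_j$ be a \emph{maximal} $\vare$-separated subset, i.e. $d_j(p_a,p_b)\ge\vare$ for $a\ne b$. Then the balls $B(p_a,\vare/2)$ are pairwise disjoint, while $\diam(M_j)\le D$ forces $M_j\subset B(p_a,2D)$ for every $a$. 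Applying Bishop--Gromov with radii $\vare/2<2D$ gives
\be
\vol\big(B(p_a,\vare/2)\big)\ \ge\ \frac{V_k(\vare/2)}{V_k(2D)}\,\vol\big(B(p_a,2D)\big)\ =\ \frac{V_k(\vare/2)}{V_k(2D)}\,\vol(M_j),
\ee
and summing over the disjoint balls $B(p_a,\vare/2)\subset M_j$ yields $N\cdot \frac{V_k(\vare/2)}{V_k(2D)}\,\vol(M_j)\le \vol(M_j)$, hence
\be
N\ \le\ \frac{V_k(2D)}{V_k(\vare/2)}\ =:\ N(\vare),
\ee
a bound depending only on $n$, $k$, $D$ and $\vare$. (When $k>0$ one uses $D\le\pi/\sqrt{k}$, which is automatic by Bonnet--Myers, so $V_k(2D)$ is finite.)

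To finish, note that maximality of the $\vare$-separated set means the balls $B(p_a,\vare)$ cover $M_j$: any $x\in M_j$ lies within $\vare$ of some $p_a$, for otherwise $\{p_1,\dots,p_N,x\}$ would be a strictly larger $\vare$-separated set. Thus each $M_j$ is covered by at most $N(\vare)$ balls of radius $\vare$, with $N(\vare)$ independent of $j$, so the hypotheses of Theorem~\ref{thm-GromovCompactness} are satisfied and a GH convergent subsequence exists. The only substantive ingredient is the Bishop--Gromov inequality; the rest is the standard packing/net argument. The point that deserves care — and the real reason the argument works — is that Bishop--Gromov compares volumes against a \emph{fixed} model space, which is precisely what makes the covering function $N(\vare)$ uniform across the entire family rather than depending on the individual manifold.
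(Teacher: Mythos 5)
The paper states this as a classical theorem of Gromov and gives no proof of its own, so there is nothing to compare against; your argument is the standard one and is correct. You reduce to the equiboundedness criterion of Theorem~\ref{thm-GromovCompactness} and verify it with the maximal $\varepsilon$-separated set (packing) argument, using Bishop--Gromov against a fixed model space $V_k$ to get a covering bound $N(\varepsilon)$ uniform over the family; the parenthetical remark invoking Bonnet--Myers (or, equivalently, capping $V_k$ at the total volume of the model sphere) correctly handles the case $k>0$.
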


\subsection{Prior GH Convergence Results of the Author with Sormani} \label{sbs-PerSor}

In this subsection we review results published in \cite{PerSor-2013}.

Recall that for a Riemannian manifold with boundary $(M,g)$ the $\delta$-inner region of $M$ is given by 
\be
M^{\delta} =\{ \, x\in M : d(x,\bdry M) >\delta\}.
\ee
The inner regions may be endowed with the induced length metric $d_{M^\delta}$ 
\be
d_{M^\delta}(x,y) := \inf\Big\{ L_g(C): \,\, C:[0,1]\to M^\delta \,\,\text{smooth}, \, C(0)=x, \, C(1)=y \Big\}
\ee
(which is possibly infinite) or the restricted metric $d$
\be 
d(x,y) := \inf\Big\{ L_g(C): \,\, C:[0,1]\to M \,\,\text{smooth}, \, C(0)=x, \, C(1)=y \Big\}
\ee
where
\be
L_g(C)=\int_0^1 g^{1/2}(C'(t),C'(t))\, dt.
\ee

In the first theorem presented here, we proved GH subconvergence of the sequence $M^\delta_j$ with respect to the restricted metric since this provides more information about the original sequence of manifolds $M_j$.  But note that the diameter bound we required is with respect to the induced length metric.   In particular the inner regions were assumed to be path connected:  

\begin{thm}[P\---Sormani]\label{thm-GHdelta}
Given $n\in \N$ and $\delta,D,V,\theta >0$ suppose that $(M_j, g_j)$ is a sequence of compact oriented manifolds with boundary such that
\be
\ric(M_j) \geq 0, \,\,\,  \vol(M_j)\le V, \,\,\,
\diam(M_j^{\delta}, {\,d_{M_j^{\delta}}}) \leq D, 
\ee
\be
\exists q\in M_j^\delta\,\, \text{such that}\,\,\vol(B(q,\delta)) \ge \theta\delta^n, 
\ee
where $B(q,\delta)$ is the ball in $M_j$ with center $q$  and radius $\delta$. Then there is a subsequence $\{j_k\}$ and a compact metric
space $(X(\delta),d_{X(\delta)})$ such that 
\be
(\overline{M^\delta_{j_k}}, d_{M_{j_k}}) \GHto (X(\delta),d_{X(\delta)}).
\ee
\end{thm}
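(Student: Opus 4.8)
The plan is to verify the hypotheses of Gromov's compactness theorem (Theorem~\ref{thm-GromovCompactness}) for the sequence $\{(\overline{M_j^\delta},d_{M_j})\}$. Each $\overline{M_j^\delta}$ is a closed subset of the compact manifold $M_j$, hence a compact metric space under the restricted metric $d_{M_j}$. Since every curve in $M_j^\delta$ is also a curve in $M_j$, we have $d_{M_j}\le d_{M_j^\delta}$ on $M_j^\delta$, so
\[
\diam(\overline{M_j^\delta},d_{M_j})=\diam(M_j^\delta,d_{M_j})\le \diam(M_j^\delta,d_{M_j^\delta})\le D ,
\]
which gives the required uniform diameter bound. (Finiteness of $\diam(M_j^\delta,d_{M_j^\delta})$ also forces $M_j^\delta$ to be path connected, which will be used below.)

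It then remains to produce, for every $\varepsilon\in(0,\delta)$, a bound $N(\varepsilon)$ on the number of $\varepsilon$-balls needed to cover $\overline{M_j^\delta}$ that is independent of $j$. Fix such an $\varepsilon$, choose a maximal $(\varepsilon/2)$-separated subset $\{p_1,\dots,p_{N_j}\}$ of $M_j^\delta$ with respect to $d_{M_j}$, and note that the balls $B(p_i,\varepsilon/4)$ are pairwise disjoint while the balls $B(p_i,\varepsilon)$ cover $\overline{M_j^\delta}$. The heart of the matter is the uniform lower volume bound
\[
\vol\bigl(B(p_i,\varepsilon/4)\bigr)\ \ge\ c(n,\delta,D,\theta)\,\varepsilon^n ,
\]
which combined with $\sum_i\vol\bigl(B(p_i,\varepsilon/4)\bigr)\le\vol(M_j)\le V$ yields $N_j\le V/(c\,\varepsilon^n)$, and hence a function $N(\cdot)$ as required (for $\varepsilon\ge\delta$ one simply reuses the $\delta$-count). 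Granting this, Gromov's theorem produces the subsequence and a compact limit $(X(\delta),d_{X(\delta)})$, as claimed.

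To obtain the volume bound I would argue as follows. Since $p_i\in M_j^\delta$, every metric ball of radius at most $\delta$ centered at $p_i$ lies in the interior of $M_j$ and misses $\bdry M_j$; on such balls the Bishop--Gromov volume comparison is available because $\ric(M_j)\ge 0$, so it suffices to bound $\vol\bigl(B(p_i,\delta/2)\bigr)$ from below, since then $\vol\bigl(B(p_i,\varepsilon/4)\bigr)\ge(\varepsilon/(2\delta))^n\vol\bigl(B(p_i,\delta/2)\bigr)$. To bound the former, propagate the noncollapsing hypothesis at $q$ along a path $\gamma\subset M_j^\delta$ from $q$ to $p_i$ of length at most $D+1$: subdivide $\gamma$ by points $q=x_0,x_1,\dots,x_m=p_i$ with $d_{M_j}(x_k,x_{k+1})\le\delta/4$ and $m\le\lceil 4(D+1)/\delta\rceil$, so that $B(x_k,\delta/2)\subset B(x_{k+1},3\delta/4)$ and Bishop--Gromov at $x_{k+1}$ gives $\vol\bigl(B(x_{k+1},\delta/2)\bigr)\ge(2/3)^n\vol\bigl(B(x_k,\delta/2)\bigr)$. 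Iterating and using $\vol\bigl(B(q,\delta/2)\bigr)\ge 2^{-n}\vol\bigl(B(q,\delta)\bigr)\ge 2^{-n}\theta\delta^n$ yields a lower bound depending only on $n,\delta,D,\theta$, since $m$ is bounded independently of $j$ precisely by the intrinsic diameter bound on $M_j^\delta$.

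The step I expect to be the main obstacle is exactly this last volume estimate: turning the single-point noncollapsing condition at $q$ into a uniform lower bound on the volume of small balls centered at arbitrary points of $M_j^\delta$. Keeping all the chaining radii below $\delta$ is what makes the Bishop--Gromov comparison legitimate despite the presence of $\bdry M_j$ (one must be careful that the relevant balls genuinely avoid the boundary), and the intrinsic length-metric diameter bound is what keeps the number of chaining steps — and hence the final constant $c(n,\delta,D,\theta)$ — uniform in $j$. Everything else is a routine application of Gromov's theorem.
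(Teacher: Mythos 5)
Your proof is correct and takes the same route as the paper's own proof (given in \cite{PerSor-2013}): verify Gromov's compactness via uniform covering numbers, which are bounded using a Bishop--Gromov chaining argument that propagates the single-point noncollapsing at $q$ along $d_{M_j^\delta}$-paths to produce a uniform lower volume bound $\vol(B(p,r)) \geq c(n,\delta,D,\theta)\,r^n$ for all $p\in M_j^\delta$ and small $r$. The paper's Remark~\ref{rmrk-VolBall} records exactly this estimate, and its $2^{-nD/\varepsilon}$ form is the signature of precisely the chaining you carried out.
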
  

\begin{rmrk}\label{rmrk-VolBall}
In the proof of Theorem \ref{thm-GHdelta} it was shown that for all $p \in M^\delta$ and 
$\varepsilon < \delta/2$ 
\be
\vol(B(p,\varepsilon)) \geq  2^{-nD/\varepsilon} \theta\varepsilon^n.
\ee
This estimate also works for $\varepsilon=\delta/2$. Choosing $\varepsilon=\delta/2$ and applying Bishop-Gromov Volume Comparison we get:
\be\label{eq-noncollapsing}
\vol(B(p,r)) \geq  2^{-nD/(\delta/2)} \theta r^n.
\ee
for all $r \leq \delta/2$.
\end{rmrk}

For a decreasing sequence of real numbers, $\delta_i \to 0$, we obtained simultaneous convergence of sequences of inner regions.

\begin{thm}[P\---Sormani]\label{thm-GHdeltas}
Take $n\in \N$, a decreasing sequence, $\delta_i \to 0$, $D_i>0$, $i=0,1,2...$, $V>0$ and $\theta>0$. Suppose that $(M_j, g_j)$ is a sequence of compact $n$-dimensional Riemannian manifolds with boundary such that
\be
\ric(M_j) \geq 0, \,\,\,  \vol(M_j)\le V, \,\,\,
\diam(M_j^{\delta_i}, {\,d_{M_j^{\delta_i}}}) \leq D_{i} \,\,\,\forall i 
\ee
and 
\be
\exists q\in M_j^\delta\,\, \text{such that}\,\,\vol(B(q,\delta)) \ge \theta\delta^n, 
\ee
where $B(q,\delta)$ is the ball in $M_j$ with center $q$  and radius $\delta$. Then there is a subsequence $\{j_k\}$ such that 
$(\overline{M^{\delta_i}_{j_k}}, d_{M_{j_k}})$ converges in Gromov-Hausdorff sense for all $i$.
\end{thm}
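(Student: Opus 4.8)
The proof is a nested-subsequence diagonal argument resting on Theorem~\ref{thm-GHdelta}, Bishop--Gromov volume monotonicity, and Blaschke's selection theorem. The one genuine point to be checked is that the \emph{single}-scale noncollapsing hypothesis yields a scale-$\delta_i$ noncollapsing, with a constant independent of $j$, for every $\delta_i$ small enough to invoke Theorem~\ref{thm-GHdelta}.

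\textbf{Step 1 (propagating the noncollapsing condition).} Since $\{\delta_i\}$ decreases to $0$, set $i_0=\min\{i:\delta_i\le\delta\}$. For every $i\ge i_0$ the given point $q\in M_j^\delta$ satisfies $d_j(q,\bdry M_j)>\delta\ge\delta_i$, so $q\in M_j^{\delta_i}$; moreover every minimizing geodesic from $q$ of length $<\delta$ stays in the interior of $M_j$, so $B(q,r)$ for $r\le\delta$ is an interior ball to which, using $\ric(M_j)\ge0$, the Bishop--Gromov comparison applies and makes $r\mapsto\vol(B(q,r))/r^n$ non-increasing on $(0,\delta]$. Hence for all $i\ge i_0$,
\be
\vol(B(q,\delta_i))\ \ge\ \Big(\tfrac{\delta_i}{\delta}\Big)^{n}\vol(B(q,\delta))\ \ge\ \theta\,\delta_i^{\,n}.
\ee
Together with $\ric(M_j)\ge 0$, $\vol(M_j)\le V$ and the hypothesis $\diam(M_j^{\delta_i},d_{M_j^{\delta_i}})\le D_i$, this shows that for each fixed $i\ge i_0$ the sequence $(M_j,g_j)$ satisfies all the hypotheses of Theorem~\ref{thm-GHdelta} with parameters $(n,\delta_i,D_i,V,\theta)$.

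\textbf{Step 2 (small scales and diagonalization).} By Theorem~\ref{thm-GHdelta}, for each $i\ge i_0$ some subsequence of $\{(\overline{M_j^{\delta_i}},d_{M_j})\}_j$ converges in GH sense to a compact space. Passing to nested subsequences $\{j^{(i_0)}_k\}\supseteq\{j^{(i_0+1)}_k\}\supseteq\cdots$ and taking the usual diagonal sequence $\{j_k\}$ produces a single subsequence along which $(\overline{M_{j_k}^{\delta_i}},d_{M_{j_k}})$ converges in GH sense for every $i\ge i_0$. \textbf{Step 3 (the finitely many large scales).} It remains to treat the indices $i<i_0$; for these $\delta_i>\delta_{i_0}$, hence $M_j^{\delta_i}\subseteq M_j^{\delta_{i_0}}$ for all $j$. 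Along $\{j_k\}$ the spaces $\overline{M_{j_k}^{\delta_{i_0}}}$ GH-converge, so by Gromov's embedding theorem (Theorem~\ref{thm-GromovEmbedding}) we may pass to a further subsequence and isometrically embed all $\overline{M_{j_k}^{\delta_{i_0}}}$ and their limit into a common compact metric space $(Z,d)$, with Hausdorff convergence. Each $\overline{M_{j_k}^{\delta_i}}$ with $i<i_0$ is then a compact, hence closed, subset of $Z$, so by Blaschke's theorem (Theorem~\ref{thm-Blaschke}) a further subsequence makes it converge in Hausdorff sense in $Z$; doing this successively for $i=i_0-1,i_0-2,\dots$ (finitely many steps) gives one subsequence, still denoted $\{j_k\}$, along which $\overline{M_{j_k}^{\delta_i}}$ converges in Hausdorff, and therefore GH, sense for every $i<i_0$ as well. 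Subsequences of the GH-convergent sequences from Step~2 remain GH-convergent, so $\{j_k\}$ works for all $i$, which is the claim. (If $M_j^{\delta_i}=\emptyset$ for some $i<i_0$, the corresponding statement is trivial, $\emptyset$ being a closed subset of $Z$.)

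\textbf{Expected main obstacle.} Everything past Step~1 is the standard diagonal/Blaschke machinery. The delicate point is that the hypothesis gives noncollapsing only at the fixed scale $\delta$, not at each $\delta_i$; this is harmless for $\delta_i\le\delta$ by Bishop--Gromov monotonicity at the fixed center $q$, but for the finitely many $\delta_i>\delta$ no uniform lower bound on the volume of a $\delta_i$-ball is available from the hypotheses, which is exactly why those scales are handled by the nesting $M_j^{\delta_i}\subseteq M_j^{\delta_{i_0}}$ and Blaschke rather than by a direct appeal to Theorem~\ref{thm-GHdelta}.
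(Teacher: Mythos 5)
The paper does not actually reprove Theorem~\ref{thm-GHdeltas}; it is stated as a result of Perales--Sormani \cite{PerSor-2013} and used as a black box, so there is no internal proof to compare your argument against. Judged on its own terms, your proof is correct and is the natural way to derive the multi-scale statement from the single-scale Theorem~\ref{thm-GHdelta}. You correctly identify the one genuinely nontrivial point --- that the hypothesis provides noncollapsing at a single scale $\delta$ while Theorem~\ref{thm-GHdelta} must be invoked at each scale $\delta_i$ --- and you resolve it cleanly: $q\in M_j^{\delta}\subset M_j^{\delta_i}$ for $\delta_i\le\delta$, the ball $B(q,\delta)$ lies in the interior so Bishop--Gromov is applicable, and monotonicity of $r\mapsto\vol(B(q,r))/r^n$ yields $\vol(B(q,\delta_i))\ge\theta\delta_i^n$ with the \emph{same} constant $\theta$. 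Incidentally, this is slightly cleaner than what Remark~\ref{rmrk-VolBall} records about the proof in \cite{PerSor-2013}, which propagates noncollapsing to arbitrary centers $p\in M^\delta$ at the cost of a constant deteriorating like $2^{-nD/\varepsilon}$; since Theorem~\ref{thm-GHdelta} only needs the estimate at one center, fixing $q$ and using Bishop--Gromov is enough and preserves $\theta$. The diagonalization in Step~2 is standard. Step~3, handling the finitely many indices with $\delta_i>\delta$ via Gromov embedding at scale $\delta_{i_0}$ and Blaschke on the nested closed subsets $\overline{M_{j_k}^{\delta_i}}\subset\overline{M_{j_k}^{\delta_{i_0}}}$, is also correct and shows you read the hypotheses carefully; note though that in the companion Theorem~\ref{thm-GHconvergence2} (where $\delta$ is listed explicitly as a parameter alongside $\{\delta_i\}$) one normally normalizes so that $\delta_0\le\delta$, in which case $i_0=0$ and Step~3 is vacuous. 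Including it makes your argument robust to the more general reading.
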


\subsection{Cheeger-Colding Theorems}\label{sbs-ChCo}

Here we review a result by Colding \cite{Colding-volume} and few of the many important theorems of Cheeger-Colding proven in \cite{ChCo-PartI} that we need to prove that the GH limit is inside the SWIF limit, see proof of Theorem \ref{thm-GH=IF}. 

The next theorem tells us that the volume of balls of manifolds are close to the volume of balls in Euclidean space when these balls are close in GH sense. This result is used in Sormani-Wenger \cite{SorWen1} to prove that the GH limit of manifolds with no boundary coincides with the SWIF limit (cf Theorem \ref{thm-SWgh=if} within). We will use this theorem as well to prove our new Theorem \ref{thm-GH=IF}.  

\begin{thm}[Colding, Corollary 2.19 in \cite{Colding-volume}]\label{thm-Colding-volume}
For all $\varepsilon >0$ and $n \in \N$ there exist $k(\varepsilon,n) > 0$ and $\delta(\varepsilon,n) >0 $ such that for any complete n-dimensional Riemannian manifold $M$ that satisfies 
\be  
\ric(M) \ge -(n-1)k \text{ and  } d_{GH}(B(p,1),B(0,1)) < \delta,
\ee
the following holds:
\be 
|\vol(B(p,1)) -\vol(B(0,1))| < \varepsilon,
\ee
where $B(0,1)$ denotes the open ball of radius $1$ and center $0$ in the Euclidean space $\R^n$.
\end{thm}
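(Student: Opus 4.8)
Throughout, write $\omega_n:=\vol(B(0,1))$. The plan, following \cite{Colding-volume}, is to split the estimate into its two halves. The upper bound requires no hypothesis on $d_{GH}$: by the Bishop--Gromov volume comparison, $\ric(M)\ge-(n-1)k$ forces $\vol(B(p,1))\le v_{-k}(1)$, where $v_{-k}(1)$ is the volume of the unit ball in the simply connected space form of constant curvature $-k$, and $v_{-k}(1)\downarrow\omega_n$ as $k\downarrow0$; so I would choose $k(\varepsilon,n)$ with $v_{-k}(1)<\omega_n+\varepsilon$. The substance of the theorem is the matching lower bound $\vol(B(p,1))>\omega_n-\varepsilon$, which I would prove by contradiction. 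If it failed there would be $k_j\downarrow0$, $\delta_j\downarrow0$ and complete $n$-dimensional manifolds $M_j$ with $\ric(M_j)\ge-(n-1)k_j$, $d_{GH}(B(p_j,1),B(0,1))<\delta_j$, yet $\vol(B(p_j,1))\le\omega_n-\varepsilon$; I would then contradict this by showing $\liminf_j\vol(B(p_j,r))\ge r^n\omega_n$ for each fixed $r<1$, since that forces $\liminf_j\vol(B(p_j,1))\ge r^n\omega_n\to\omega_n$ as $r\to1$.

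To prove the latter I would, for fixed $r<1$ and small $\eta>0$, use a $\delta_j$-Gromov--Hausdorff approximation $F_j\colon B(0,1)\to B(p_j,1)$ to pull back the nearly antipodal ``axis'' points: set $q_i^{\pm}:=F_j(\pm(1-\eta)e_i)$ for $i=1,\dots,n$ and $b_i:=d(\cdot,q_i^{+})-d(p_j,q_i^{+})$. Because $F_j$ nearly preserves distances, the pair $q_i^{+},q_i^{-}$ is nearly antipodal through $p_j$, so the excess $e_i(x):=d(x,q_i^{+})+d(x,q_i^{-})-d(q_i^{+},q_i^{-})$ is uniformly small on $B(p_j,r)$, while the ``axes'' are mutually orthogonal in the Euclidean model at the origin. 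Feeding this into the standard Ricci machinery -- the Laplacian comparison $\Delta\,d(\cdot,q_i^{\pm})\le(n-1)\sqrt{k_j}\coth(\sqrt{k_j}\,d(\cdot,q_i^{\pm}))$, the Abresch--Gromoll excess estimate, and the segment (Poincar\'e-type) inequality -- one shows that on $B(p_j,r)$ the $b_i$ form an $L^1$-almost linear, almost harmonic system whose gradients are an almost orthonormal frame:
\[
|\nabla b_i|=1\ \text{a.e.},\qquad \frac{1}{\vol(B(p_j,r))}\int_{B(p_j,r)}\bigl|\langle\nabla b_i,\nabla b_j\rangle\bigr|\,d\vol\ \longrightarrow\ 0\quad(j\to\infty,\ i\neq j),
\]
equivalently, the Gram matrix $G^{(j)}=(\langle\nabla b_i,\nabla b_j\rangle)$ tends to the identity in $L^1(B(p_j,r))$. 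I expect this step -- the passage from the cheap $C^0$ closeness of the $b_i$ to quantitative control of their gradients, which rests on the rigidity of distance functions under a lower Ricci bound -- to be the main obstacle; everything afterwards is comparison geometry and measure theory.

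Granting it, I would finish as follows. Set $\Phi_j:=(b_1,\dots,b_n)\colon B(p_j,r)\to\R^n$. Each $b_i$ is $1$-Lipschitz, so $\jac\Phi_j=\sqrt{\det G^{(j)}}\le\prod_i|\nabla b_i|\le1$ almost everywhere, and the area formula gives
\[
\vol(B(p_j,r))\ \ge\ \int_{B(p_j,r)}\jac\Phi_j\,d\vol\ =\ \int_{\R^n}\#\bigl(\Phi_j^{-1}(y)\cap B(p_j,r)\bigr)\,d\mathcal{L}^n(y)\ \ge\ \mathcal{L}^n\bigl(\Phi_j(B(p_j,r))\bigr).
\]
It then remains to see that $\mathcal{L}^n(\Phi_j(B(p_j,r)))\to r^n\omega_n$: the Gram estimate makes $\Phi_j$ almost distance preserving, so $\Phi_j\circ F_j$ is a $\Psi_j$-almost isometry of $B(0,r)$ into $\R^n$ with $\Psi_j\to0$; by the stability of isometries of Euclidean balls it lies within $O(\Psi_j)$ of a rigid motion $R_j$, and a degree-theoretic argument on slightly smaller balls yields $\Phi_j(B(p_j,r))\supseteq R_j\bigl(B(0,r-\Psi_j')\bigr)$ for some $\Psi_j'\to0$, whence $\mathcal{L}^n(\Phi_j(B(p_j,r)))\ge(r-\Psi_j')^n\omega_n\to r^n\omega_n$. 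This gives $\liminf_j\vol(B(p_j,r))\ge r^n\omega_n$; letting $r\to1$ contradicts $\vol(B(p_j,1))\le\omega_n-\varepsilon$ and proves the theorem.
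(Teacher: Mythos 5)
The paper does not prove this theorem; it is stated as a citation of Colding's Corollary 2.19 in \cite{Colding-volume} and used as a black box in the proof of Theorem~\ref{thm-GH=IF}. There is therefore no proof in the paper to compare your attempt against, and I will comment on your sketch on its own merits.

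Your outline is a faithful reconstruction of the route Colding actually takes: Bishop--Gromov for the costless upper bound; for the lower bound, a contradiction setup, distance functions $b_i$ built from nearly antipodal points pulled back through a Gromov--Hausdorff approximation, an $L^1$ estimate forcing the Gram matrix $G^{(j)}$ to tend to the identity, and a conversion to a volume bound via the area formula. You correctly flag the $L^1$ Gram estimate as the crux, and you should note explicitly that $C^0$-closeness together with $|\nabla b_i|=1$ a.e.\ is essentially free and nowhere near sufficient there: what is genuinely needed is an integral Hessian bound coming from Bochner and the Laplacian comparison, transported along segments via the Cheeger--Colding segment inequality. The remaining genuine gap is your final surjectivity step. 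You invoke degree theory on $\overline{B(p_j,r)}$, but that set is not a priori a topological ball --- obtaining topological control of small metric balls under a mere lower Ricci bound is of roughly the same depth as the volume estimate you are trying to prove --- so the degree of $\Phi_j|_{\partial B(p_j,r)}$ at an interior point cannot simply be read off as $1$. Nor can the Gromov--Hausdorff approximation $F_j$ be used to transport a degree from the Euclidean model, since $F_j$ is not continuous; the $C^0$ estimate only yields that $\Phi_j(B(p_j,r))$ is \emph{dense} in a slightly smaller Euclidean ball, which is a much weaker statement than a Lebesgue-measure lower bound and cannot close your area-formula chain. In Colding's and Cheeger--Colding's treatments this is handled by a local almost-inverse / measure-covering argument on the set where $d\Phi_j$ is quantitatively close to a linear isometry, not by a global degree argument, and that step is real work rather than a formality. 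So the plan is sound and essentially the right one, but this last step is a substantive gap in the proposal as written.
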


In the noncollapsing case the volume of the manifolds converge to the Hausdorff measure of the limit space. 

\begin{thm}[Cheeger-Colding \cite{ChCo-PartI}] \label{thm-chco}
Let $k \in \R$, $v>0$ and $\{M_j^n\}$ be a sequence of $n$-dimensional compact Riemannian manifolds
such that
\be
\ric(M_j)\geq (n-1)k,\,\, M_j \GHto X \textrm{ and } \vol(M_j) \ge v.
\ee
Then for all $r>0$ and  $x \in X$
\be
\lim_{j\to\infty} \vol(B(x_j,r))= \mathcal{H}^n(B(x,r)),
\ee 
where $x_j\in M_j$ such that $x_j \to x$ and $\mathcal{H}^n$ denotes $n$-Hausdorff measure. 
In particular, 
\be
\lim_{j\to\infty} \vol(M_j)= \mathcal{H}^n(X).
\ee
\end{thm}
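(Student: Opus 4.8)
The plan is to reduce the statement to the single assertion that, after isometrically embedding the $M_j$ and $X$ into a common compact metric space $Z$ as in Theorem~\ref{thm-GromovEmbedding}, the Riemannian volume measures of the $M_j$ converge weakly-$*$, as Borel measures on $Z$, to $\mathcal{H}^n$ restricted to $X$; the pointwise statement then falls out of weak-$*$ convergence. First I would record uniform two-sided volume bounds: by Theorem~\ref{thm-ConverseGromov} there is $D$ with $\diam(M_j)\le D$ for all $j$, and Bishop--Gromov volume comparison together with $\vol(M_j)\ge v$ and $\diam(M_j)\le D$ gives
\be
c_0\,r^n\ \le\ \vol(B(p,r))\ \le\ C_0\,r^n,\qquad \forall\, p\in M_j,\ r\le D,
\ee
with $c_0,C_0$ depending only on $n,k,D,v$. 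Since the total masses are uniformly bounded, a subsequence of these measures converges weakly-$*$ to a finite Borel measure $\nu$ supported on $X$, and the bounds above pass to the limit: $c_0 r^n\le\nu(\bar B(x,r))$ and $\nu(B(x,r))\le C_0 r^n$ for every $x\in X$ and $r\le D$, so $\spt\nu=X$ and $0<\nu(X)<\infty$.

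Next I would use Bishop--Gromov monotonicity: on each $M_j$ the ratio $r\mapsto\vol(B(p,r))/V_k(r)$ is non-increasing, where $V_k(r)$ is the volume of an $r$-ball in the simply connected space form of curvature $k$, and this monotonicity passes to the limit, so $r\mapsto\nu(\bar B(x,r))/V_k(r)$ is non-increasing for every $x\in X$. Hence the $n$-density $\Theta(x):=\lim_{r\to0^+}\nu(B(x,r))/(\omega_n r^n)$ exists at every $x\in X$, where $\omega_n$ denotes the volume of the Euclidean unit ball in $\R^n$, and it is pinched by $0<c_0/\omega_n\le\Theta(x)\le C_0/\omega_n$.

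The core of the proof is to show $\Theta(x)=1$ for $\nu$-almost every $x$. Because the bounded non-increasing function $r\mapsto\nu(\bar B(x,r))/V_k(r)$ converges as $r\to0$, for a sequence of scales $r\to0$ its oscillation on $[\lambda r,\,r]$ tends to $0$; the Cheeger--Colding almost-rigidity theorems (almost volume annulus implies almost metric annulus, \cite{ChCo-PartI}) then force the rescaled pointed spaces $(X,\,r^{-1}d,\,x)$ to subconverge to a metric cone, which by the non-collapsing lower bound is non-collapsed, and at $\nu$-a.e.\ $x$ this cone is in fact isometric to $\R^n$. For such $x$ and the corresponding scales, the balls $B(x_j,r)$ about points $x_j\to x$ are, after rescaling the metric by $r^{-1}$, arbitrarily close in GH sense to the Euclidean unit ball, so applying Colding's volume estimate, Theorem~\ref{thm-Colding-volume}, to $(M_j,\,r^{-2}g_j)$ gives $\vol(B(x_j,r))=\omega_n r^n(1+o(1))$; letting $j\to\infty$ and then $r\to0$ yields $\Theta(x)=1$. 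Since $\nu$ is doubling with $n$-density $1$ $\nu$-a.e., while $\mathcal{H}^n\rstr X$ is finite (by the upper density bound and a Vitali covering argument) and doubling, a standard density/differentiation comparison shows $\nu$ and $\mathcal{H}^n\rstr X$ are mutually absolutely continuous with Radon--Nikodym derivative $1$, i.e.\ $\nu=\mathcal{H}^n\rstr X$. As this is independent of the chosen subsequence, the whole sequence of volume measures converges weakly-$*$ to $\mathcal{H}^n\rstr X$.

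Finally, for $x\in X$, $r>0$ and $x_j\to x$, weak-$*$ convergence gives $\mathcal{H}^n(B(x,r))=\nu(B(x,r))\le\liminf_j\vol(B(x_j,r))$ and $\limsup_j\vol(B(x_j,r))\le\nu(\bar B(x,r))=\mathcal{H}^n(\bar B(x,r))$, and since in a non-collapsed limit the metric sphere $\{y\in X:d(x,y)=r\}$ is $\mathcal{H}^n$-null (clear for a.e.\ $r$ from finiteness of $\mathcal{H}^n(X)$, then extended to all $r$ by monotonicity of $r\mapsto\mathcal{H}^n(\bar B(x,r))$ and squeezing), the two sides agree; taking $r\ge D$ gives $\vol(M_j)\to\mathcal{H}^n(X)$. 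The step I expect to be the main obstacle is the almost-everywhere Euclidean-tangent-cone assertion in the third paragraph: this is the genuine geometric content of Cheeger--Colding, requiring the almost-rigidity machinery together with the non-collapsing hypothesis to rule out lower-dimensional or singular tangent cones, with Colding's volume estimate serving precisely to convert the Euclidean tangent into the sharp value of the limit measure; the subsequent identification of $\nu$ with $\mathcal{H}^n\rstr X$ is routine measure theory.
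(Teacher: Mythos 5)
The paper does not prove this statement: Theorem~\ref{thm-chco} appears in the review subsection on Cheeger--Colding theory and is quoted from \cite{ChCo-PartI} without proof, so there is no in-paper argument to compare yours against. Judged on its own, your outline is a faithful reconstruction of the standard Cheeger--Colding proof of volume continuity in the non-collapsed case: two-sided Bishop--Gromov bounds giving Ahlfors regularity of a weak-$*$ limit measure $\nu$, monotonicity of $r\mapsto\nu(\bar B(x,r))/V_k(r)$ in the limit (which, as you note, also kills the open/closed ball discrepancy for every $r$, not just a.e.\ $r$), density one at $\nu$-a.e.\ point via almost-rigidity plus Colding's estimate applied to rescaled balls, and then the standard density comparison to identify $\nu$ with $\mathcal{H}^n\rstr X$; uniqueness of the limit measure upgrades subsequential to full convergence. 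You correctly flag that the one genuinely deep step --- that $\nu$-a.e.\ point of $X$ has all tangent cones isometric to $\R^n$ --- is not proved but imported from the almost-rigidity machinery; that is precisely the content of Theorems 2.1 and 5.9 of \cite{ChCo-PartI} (restated as Theorem~\ref{thm-RegularSet} in this paper), so invoking it is consistent with the level at which the present paper treats this material. One small point to be careful about if you were to write this out in full: the sharp-constant density comparison needed to conclude $\nu=\mathcal{H}^n\rstr X$ (rather than mutual absolute continuity up to dimensional constants) uses that the density exists as a genuine limit and equals one on a set of full measure for both measures; the everywhere-positive lower density bound is what transfers ``$\nu$-null'' to ``$\mathcal{H}^n$-null'' and closes that loop.
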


\begin{rmrk}\label{rmrk-singSet}
Since the theorem is proven locally, if $M_j$ is a sequence of $n$-dimensional manifolds with boundary that satisfy 
\be
\ric(M_j)\geq (n-1)k,\,\, M_j \GHto X
\ee
and for each $x \in M_j^\delta$
\be
\vol(B(x,r)) \ge v(\delta)>0
\ee
for $r\leq \delta/2$. Then 
\be
\lim_{j\to\infty} \vol(B(x_j,r))= \mathcal{H}^n(B(x,r)), 
\ee 
where $x \in X$, $x_j\in M^\delta_j$ such that $x_j \to x$ and $r \leq \delta/2$.  
\end{rmrk}

\begin{defn}
A sequence $\{(X_j, d_j,p_j) \}$, $p_j \in X_j$, converges in the pointed Gromov-Hausdorff sense to a metric space $(X,d,p)$ if the following holds. For all $r >0$ and $\varepsilon >0$ there exists $N \in \mathbb N$ and maps 
\be f_{j}:B(p_j,r) \to X
\ee
such that
\be 
f(p_j)=p,\,\,\,\,d_{GH}(B(p_j,r),f_j(B(p_j,r))) < 2\delta 
\ee
and 
\be 
B(p,r-\varepsilon) \subset  T_\varepsilon f_j(B(p_j,r)),
\ee
where $T_\varepsilon f_j(B(p_j,r))$ is the $\varepsilon$ neighborhood of $f_j(B(p_j,r))$.
\end{defn}

\begin{defn}
Let $(X,d)$ be a metric space. A tangent cone at $x \in X$ is a complete pointed GH limit $(X,d_\infty,x)$ of a sequence of the form $\{(X,r_j^{-1}d,x)\}$, where $\lim_{j \to \infty}r_j=0$.
\end{defn}

\begin{defn}\label{defn-regpt}
A point $x \in X$ is called regular if for some $k$ every tangent cone of $x$ is isometric to $\R^k$. A point is called non regular  if it is not regular.  The set of regular points of $X$ is denoted by $\mathcal R (X)$.
\end{defn}

\begin{thm}[Cheeger-Colding, Theorem 2.1 and Theorem 5.9 \cite{ChCo-PartI}]\label{thm-RegularSet}
Let $k \in \R$, $v>0$ and $\{M_j^n\}$ be a sequence of $n$-dimensional compact Riemannian manifolds
such that
\be
\ric(M_j)\geq (n-1)k,\,\, M_j \GHto X \textrm{ and } \vol(M_j) \ge v.
\ee
Then the set of nonregular points of $X$ has zero $n$-Hausdorff measure and all the tangent cones of the regular points of $X$ are isometric to $\R^n$.
\end{thm}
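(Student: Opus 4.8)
This is one of the cornerstone structure theorems of Cheeger--Colding \cite{ChCo-PartI}, so I would not attempt an independent proof but rather recall the shape of their argument, which runs through a chain of almost-rigidity statements. Throughout one fixes Gromov--Hausdorff embeddings as in Theorem~\ref{thm-GromovEmbedding} and works with the weak limit $\mu$ of the Riemannian volume measures on $X$; by the volume convergence theorem (Theorem~\ref{thm-chco}, which itself rests on Colding's continuity, Theorem~\ref{thm-Colding-volume}) one has $\mu=\mathcal H^n$ and, because $\vol(M_j)\ge v>0$, the limit carries positive $\mathcal H^n$-measure, so $\dim_{\mathcal H}X=n$. Bishop--Gromov comparison on the $M_j$ (using $\ric(M_j)\ge(n-1)k$) passes to the limit to give, for every $x\in X$, monotonicity of the ratio $\mathcal H^n(B(x,r))/\bar v_k(r)$ in $r$, where $\bar v_k(r)$ is the volume of an $r$-ball in the space form of curvature $k$. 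Consequently the density
\be
\Theta(x):=\lim_{r\to 0}\frac{\mathcal H^n(B(x,r))}{\omega_n r^n}
\ee
exists, lies in $(0,1]$ (positivity from noncollapsing, the bound $\le 1$ from $\ric\ge 0$ in the blow-ups), and $x\mapsto\Theta(x)$ is upper semicontinuous.

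\emph{Tangent cones are metric cones.} Blowing up at $x$ along scales $r_j\to 0$ and using the Bishop--Gromov packing bounds together with Theorem~\ref{thm-GromovCompactness}, subsequential pointed limits (tangent cones) exist, and on each of them the limit measure has constant density $\Theta(x)$ on every annulus about the cone point (the monotone ratio has become constant). Cheeger--Colding's ``almost volume cone implies almost metric cone'' theorem then forces every tangent cone at $x$ to be isometric to a metric cone $C(\Sigma_x)$ over a compact length space $\Sigma_x$ of diameter $\le\pi$; moreover $C(\Sigma_x)$ again satisfies the generalized $\ric\ge 0$ condition, so one can iterate.

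\emph{Stratification.} Define $\mathcal S_\ell=\{x\in X:\text{no tangent cone at }x\text{ is isometric to }\R^{\ell+1}\times C(Z)\text{ for any }Z\}$, so that $\mathcal S_0\subset\mathcal S_1\subset\cdots$ and the regular set is $X\setminus\bigcup_\ell\mathcal S_\ell$. Using the cone structure above, the Cheeger--Gromoll-type splitting theorem for limit spaces (the presence of a line forces an isometric $\R$-factor), and a Federer-style dimension-reduction argument applied to iterated tangent cones, one obtains $\dim_{\mathcal H}\mathcal S_\ell\le\ell$. The point special to the noncollapsed boundaryless setting is that $\mathcal S_{n-1}=\mathcal S_{n-2}$: a point of $\mathcal S_{n-1}\setminus\mathcal S_{n-2}$ would have a tangent cone $\R^{n-1}\times C(Z)$ with $C(Z)$ one-dimensional, hence either a line (giving $\R^n$, contradicting $x\in\mathcal S_{n-1}$) or a ray (giving a half-space $\R^{n-1}\times[0,\infty)$ of density $\tfrac12$), and the half-space is excluded because it cannot arise as a tangent cone of a noncollapsed Ricci limit of manifolds without boundary---equivalently there is a dimensional gap forbidding density close to, but below, the Euclidean value. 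Hence the nonregular set equals $\mathcal S_{n-2}$ and has $\mathcal H^n$-measure zero.

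\emph{Regular points have $\R^n$ tangent cones.} Off $\mathcal S_{n-2}$ some tangent cone splits off an $\R^n$ factor; since $\dim_{\mathcal H}X=n$, that tangent cone \emph{is} $\R^n$, so the point is regular with $\Theta=1$. Finally, to see that \emph{every} tangent cone at a regular point is $\R^n$: at such a point $\Theta(x)$ attains its maximal (Euclidean) value, so by monotonicity and upper semicontinuity every tangent cone also has Euclidean density, and by the rigid version of the volume comparison (Colding, Theorem~\ref{thm-Colding-volume}, and its equality case in \cite{ChCo-PartI}) a metric cone with $\ric\ge 0$ and Euclidean volume density must be isometric to $\R^n$. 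The main obstacles are precisely the almost-rigidity inputs---``almost volume cone $\Rightarrow$ almost metric cone'', the almost-splitting theorem, and the volume gap ruling out the codimension-one stratum---which are the technical heart of \cite{ChCo-PartI} and which I would invoke rather than reprove.
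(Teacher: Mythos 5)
The paper offers no proof of this statement: it is imported verbatim from Cheeger--Colding \cite{ChCo-PartI} (Theorems 2.1 and 5.9), so there is nothing to compare your argument against except the reference itself, and your outline is a faithful summary of the Cheeger--Colding program (volume convergence, Bishop--Gromov monotonicity and existence of densities, ``volume cone implies metric cone,'' stratification with $\mathcal{S}_{n-1}=\mathcal{S}_{n-2}$ in the noncollapsed boundaryless case, and the rigidity of density $1$ at regular points). The one imprecision is your gloss on why the half-space $\R^{n-1}\times[0,\infty)$ is excluded: its vertex density is exactly $\tfrac12$, so the $\varepsilon$-regularity gap near density $1$ is not what rules it out --- Cheeger--Colding dispose of the top stratum $\mathcal{S}_{n-1}\setminus\mathcal{S}_{n-2}$ by a separate argument --- but since you explicitly invoke rather than reprove these inputs, this does not affect the correctness of the outline.
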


\begin{rmrk}\label{rmrk-singSet}
Since the theorem is proven locally, if $M_j$ is a sequence of $n$ dimensional manifolds with boundary that satisfy 
\be
\ric(M_j)\geq (n-1)k,\,\, M_j \GHto X
\ee
and for each $x \in M_j^\delta$
\be
\vol(B(x,r)) \ge v(\delta)>0
\ee
for $r<\delta/2$, then the set of nonregular points of $X$ contained in 
\be
X(\delta)=\{x \in X \,|\, \exists\, x_j \in M_j^\delta \to x \} 
\ee
has zero $n$-Hausdorff measure and all the tangent cones of the regular points of $X$ contained in $X(\delta)$ are isometric to $\R^n$.

\end{rmrk}

%%%%%%%%%%%%%%%%%%%%%%%%%%%%%%%%%%%%%%%%%%%%%%%%%%%%%%%%%%%%%%%%%%%%%%%%%%%

\section{A Review of Integral Current Spaces and SWIF Convergence}\label{sec-IF}

In Subsection \ref{sbs-currents} we review the notion and properties of integral currents that appear on Ambrosio-Kirchheim's paper ``Currents in Metric Spaces" \cite{AK}. Here we see that an integral current, $T$, in a metric space is a current acting
on a tuple of functions (rather than a differential form) that has integer valued Borel weight functions whose boundaries are also integer rectifiable currents. The set of the current,
denoted $\set(T)$, is an oriented countably $\mathcal{H}^n$ rectifiable subset of the given metric space.   

In Subsection \ref{sbs-IFdistance} we see that Sormani-Wenger \cite{SorWen2} defined integral rectifiable current spaces, $(Y,d,T)$, where $T$ is an integral current in $\bar{Y}$ and $\set(T)=Y$.  We also define the Sormani-Wenger intrinsic flat distance (SWIF distance) which was defined in imitation of Gromov's intrinsic Hausdorff distance (GH distance), except that the Hausdorff distance, $d_H$, in Definition~\ref{defn-GH} is replaced by Federer-Fleming's flat distance $d_F$ \cite{SorWen2}. We end Subsection \ref{sbs-IFdistance} with Sormani-Wenger's Theorem that shows that under certain conditions the GH limit contains the SWIF limit from \cite{SorWen2}. 

In Subsection \ref{sbs-SWgh=if} we explain Sormani-Wenger's GH=IF Theorem for manifolds with no boundary, \cite{SorWen1} (cf. Theorem \ref{thm-SWgh=if} within).

%In Subsection \ref{sbs-LiPergh=if} we review joint work of the author with Li appearing in a preprint entitled "On the Sormani-Wenger Intrinsic Flat Convergence of Alexandrov Spaces" \cite{Li-Perales}.

\subsection{Integral Currents}\label{sbs-currents}

The aim of this subsection is to review Ambrosio-Kirchheim's notion of an integral current on a metric space (which extends the notion of Federer-Fleming) \cite{AK} \cite{FF}.
To accomplish this we define currents, Definition \ref{defn-current}, and integer currents, Definition  \ref{def-integercur}. Then we mention two important properties of integer currents proven by Ambrosio-Kirchheim \cite{AK}.  
The characterization of the mass measure, Lemma \ref{lemma-weight}, which is amply used in SWIF convergence and that the $\set$ is a countably rectifiable metric space, Lemma \ref{lemma-setrect}. The subsection finishes with the definition of an integral current, Definition \ref{defn-integralcur}.  
%The material of this subsection is based on Sormani-Wenger's paper \cite{CS11}.  We refer to their work and  Ambrosio-Kirchheim's paper \cite{AK} for a further exposition.

For a metric space $Z$, denote by $\mathcal{D}^m(Z)$ the collection
of $(m+1)$-tuples of Lipschitz functions where the first entry is a bounded function:
\be
\mathcal{D}^m(Z)=\left\{ (f,\pi)=\left(f,\pi_1 ..., \pi_m\right)\, |\, f, \pi_i: Z \to \R \text{ Lipschitz and } f \,\text{ is bounded}\right\}.
\ee

\begin{defn}[Ambrosio-Kirchheim]\label{defn-current}
Let $Z$ be a complete metric space. A multilinear functional $T:\mathcal{D}^m(Z) \to \R$ is called an $m$ dimensional current if it satisfies:

i) If there is an $i$ such that $\pi_i$ is constant on a neighborhood of $\{f\neq0\}$ then $T(f, \pi)=0$.

ii) $T$ is continuous with respect to the pointwise convergence of the $\pi_i$ for  $\Lip(\pi_i)\le 1$.

iii) There exists a finite Borel measure $\mu$ on $Z$ such that for all $(f,\pi)\in \mathcal{D}^m(Z)$
\be\label{def-AK-current-iii}
|T(f,\pi)| \le \prod_{i=1}^m \Lip(\pi_i)  \int_Z |f| \,d\mu .
\ee
The collection of all m dimensional currents of $Z$ is denoted by $\curr_m(Z)$.
\end{defn}

To each current we associate a measure and a mass: 

\begin{defn}[Ambrosio-Kirchheim] \label{defn-mass}
Let $T:\mathcal{D}^m(Z) \to \R$ be an $m$-dimensional current. The mass measure of $T$ is the smallest Borel measure $\|T\|$  such that (\ref{def-AK-current-iii}) holds for all $(f,\pi) \in \mathcal{D}^m(Z)$.

The mass of $T$ is defined as
\be \label{def-mass-from-current}
M\left(T\right) = || T || \left(Z\right) = \int_Z \, d\| T\|.
\ee
\end{defn}

To give the definition of integer current, 
we first see how to get a current by pushing forward another one, Definition \ref{defn-push}, and in Example \ref{defn-basiccur}
we define a current in Euclidean space, $\R^n$, that only requires an integer valued $L^1$ function.

\begin{defn}[Ambrosio-Kirchheim Defn 2.4]\label{defn-push}
Let $T\in \curr_m(Z)$ and $\varphi:Z\to Z'$ be a
Lipschitz map. The {\em pushforward} of $T$
to a current $\varphi_\# T \in \curr_m(Z')$ is given by
\be \label{def-push-forward}
\varphi_\#T(f,\pi)=T(f\circ \varphi, \pi_1\circ\varphi,..., \pi_m\circ\varphi).
\ee
\end{defn}

\begin{ex}[Ambrosio-Kirchheim]\label{defn-basiccur}
Let $h: A \subset \R^m \to \Z$ be an $L^1$ function. Then $\Lbrack h \Rbrack:  \mathcal{D}^m(\R^m) \to \R$ given by
\be \label{def-current-from-function}
\Lbrack h \Rbrack \left(f, \pi\right) = \int_{A \subset \R^m}  h f \det\left(\nabla \pi_i\right) \, d\mathcal{L}^m
\ee
is an $m$ dimensional current, where $\nabla \pi_i$ are defined almost everywhere by Rademacher's Theorem.
\end{ex}

Now we proceed to define integer currents: 
\begin{defn}[Defn 4.2, Thm 4.5 in Ambrosio-Kirchheim \cite{AK}] \label{def-integercur} 
Let $T\in \curr_m(Z)$. $T$ is an  integer rectifiable
current if it has a parametrization of the form $\left(\{\varphi_i\}, \{\theta_i\}\right)$, where

i) $\varphi_i:A_i\subset\R^m \to Z$ is a countable collection of bilipschitz maps
such that $A_i$ are precompact Borel measurable with pairwise disjoint images,

ii) $\theta_i\in L^1\left(A_i,\N\right)$ 
such that
\be\label{param-representation}
T = \sum_{i=1}^\infty \varphi_{i\#} \Lbrack \theta_i \Rbrack \quad\text{and}\quad \mass\left(T\right) = \sum_{i=1}^\infty \mass\left(\varphi_{i\#}\Lbrack \theta_i \Rbrack\right).
\ee
The mass measure is
\be
||T|| = \sum_{i=1}^\infty ||\varphi_{i\#}\Lbrack \theta_i \Rbrack ||.
\ee
The space of $m$ dimensional integer rectifiable currents on $Z$ is denoted by
$\intrectcurr_m\left(Z\right)$.
\end{defn}

In the next lemma we see that the mass measure of an integral current is concentrated in its $\set$.

\begin{defn}[Ambrosio-Kirchheim] \label{defn-set}
Let $T \in \curr_m(Z)$, the canonical set of $T$, denoted $\set(T)$, is 
\be
\set(T)=\{ p\in Z:\, \Theta_{*m}\left( \|T\|, p\right)>0\}
\ee 
where
\be \label{eq-set}
\Theta_{*m}\left( \|T\|, p\right):= \liminf_{r\to 0} \frac{\|T\|(B(p,r))}{\omega_m r^m}.
\ee
The function $\Theta_{*m}\left( \|T\|, p\right)$ is called the $ \|T\|$ lower density of $p$  and $\omega_m$ denotes the volume of the unit ball in $\R^m$.
\end{defn}

\begin{defn} \label{defn-rstr} 
Given $T \in \curr_m(Z)$
and $A \subset Z$ a Borel set, the restriction of $T$ to $A$ is a current,  $T \rstr A \in  \curr_m(Z)$, given by 
\be
\left( T \rstr A \right) (f, \pi)=T( \chi_A f, \pi).
\ee
where $\chi_A$ is the indicator function of $A$. 
\end{defn}

We note that the mass measure of $T \rstr A$, $||T \rstr A||$, equals $||T|||_A$. Hence, $||T \rstr A||(A)=\| T\rstr A\|(Z)=||T||(A)$.   So 
\be
\Theta_{*m}\left( \|T\|, p\right):= \liminf_{r\to 0} \frac{\|T\rstr B(p,r)\| (Z)}{\omega_m r^m}.
\ee

\begin{lem}[Ambrosio-Kirchheim]\label{lemma-weight}
Let $T \in \intrectcurr_m\left(Z\right)$ with parametrization $\left(\{\varphi_i\}, \theta_i\right)$. Then there is a function
\be \label{eqn-lem-weight-lambda}
\lambda:\set(T) \to [m^{-m/2}, 2^m/\omega_m]
\ee
such that
\be \label{eqn-lem-weight-new-key}
\Theta_{*m}(||T||,x)=\theta_T(x)\lambda(x)  %CHECK
\ee
for $\mathcal{H}^m$ almost every $x\in \set (T)$ and
\be \label{eqn-lem-weight-2}
||T||=\theta_T \lambda \mathcal{H}^m \rstr \set(T),
\ee
where $\omega_m$ denotes the volume of an unitary ball in $\R^m$ and $\theta_T: Z \to \N \cup \{0\}$ is an $L^1$ function called weight given by
\be
\theta_T= \sum_{i=1}^\infty \theta_i\circ\varphi_i^{-1}\One_{\varphi_i\left(A_i\right)}.
\ee
\end{lem}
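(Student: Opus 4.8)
This lemma is Ambrosio--Kirchheim's structure/area theorem for integer rectifiable currents in metric spaces, so the plan is to reproduce the main line of their argument [AK, \S 4], indicating the key steps. \textbf{Step 1 (reduction to a single chart).} By the mass additivity built into the parametrization \eqref{param-representation} and the fact that the images $\varphi_i(A_i)$ are pairwise disjoint, $\|T\|=\sum_i\|\varphi_{i\#}\Lbrack\theta_i\Rbrack\|$ with the summands mutually singular, so it suffices to analyze one summand $\varphi_\#\Lbrack\theta\Rbrack$, with $\varphi\colon A\subset\R^m\to Z$ bilipschitz and $\theta\in L^1(A,\N)$, and to show that $\|\varphi_\#\Lbrack\theta\Rbrack\|$ is concentrated on $S:=\varphi(A)$, is absolutely continuous with respect to $\mathcal H^m\rstr S$, and has a density of the stated form; summing over $i$ and recalling $\theta_T=\sum_i\theta_i\circ\varphi_i^{-1}\One_{\varphi_i(A_i)}$ then yields \eqref{eqn-lem-weight-2} and \eqref{eqn-lem-weight-new-key}.

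\textbf{Step 2 (metric differentiability and the area formula).} By Kirchheim's theorem, at $\mathcal L^m$-a.e.\ $a\in A$ the map $\varphi$ is metrically differentiable, with metric differential a norm $\|\cdot\|_a$ on $\R^m$ (genuinely a norm, since $\varphi$ is bilipschitz), and the metric area formula gives $\mathcal H^m(\varphi(E))=\int_E\mathbf J(\|\cdot\|_a)\,d\mathcal L^m(a)$ for every Borel $E\subset A$, where $\mathbf J$ is the Busemann--Kirchheim Jacobian of a norm; in particular $S$ is countably $\mathcal H^m$ rectifiable and $\mathcal H^m\rstr S$ is $\sigma$-finite. \textbf{Step 3 (mass of the pushforward and the representation).} Starting from Definition~\ref{defn-mass} and the formula \eqref{def-current-from-function} for $\Lbrack\theta\Rbrack$, composing test tuples $(f,\pi)$ with $\varphi$, using the chain rule for Lipschitz maps and approximating, one identifies $\|\varphi_\#\Lbrack\theta\Rbrack\|=\varphi_\#\bigl(\theta\cdot\mathbf J^{\mathrm m}(\|\cdot\|_a)\,\mathcal L^m\rstr A\bigr)$, where $\mathbf J^{\mathrm m}(\|\cdot\|_a)$ is the \emph{mass} Jacobian of the norm, i.e.\ the $\mathcal L^m$-density of the mass measure of the flat current $\Lbrack 1\Rbrack$ on the normed space $(\R^m,\|\cdot\|_a)$. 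Setting $\lambda(x):=\mathbf J^{\mathrm m}(\|\cdot\|_{a})/\mathbf J(\|\cdot\|_{a})$ at $x=\varphi(a)$ and combining with Step~2 gives $\|\varphi_\#\Lbrack\theta\Rbrack\|=(\theta\circ\varphi^{-1})\,\lambda\,\mathcal H^m\rstr S$; a blow-up at $\mathcal H^m$-a.e.\ $x\in S$, in which the rescalings of $S$ converge to $(\R^m,\|\cdot\|_a)$ and those of $\varphi_\#\Lbrack\theta\Rbrack$ to $\theta_T(x)\Lbrack 1\Rbrack$ on that tangent space, lets one pass to the limit in $\|T\|(B(x,r))/(\omega_m r^m)$ and obtain $\Theta_{*m}(\|T\|,x)=\theta_T(x)\lambda(x)$ with the same $\lambda$, given the normalization by $\omega_m$ in \eqref{eq-set}.

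\textbf{Step 4 (bounds on $\lambda$).} The two-sided bound \eqref{eqn-lem-weight-lambda} is a coordinate-free linear-algebra fact: $\lambda$ depends only on the tangent norm and is invariant under linear changes of coordinates, so by John's ellipsoid theorem one may compare the unit ball $B_{\|\cdot\|_a}(0,1)$ with inscribed and circumscribed Euclidean balls differing by a factor $\sqrt m$; comparing the mass Jacobian and the Hausdorff density of the norm ball with the Euclidean ones through these inclusions yields $m^{-m/2}\le\lambda(x)\le 2^m/\omega_m$ (the upper bound being attained in the cube-versus-ball extremal case), uniformly in $x$.

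The main obstacle is Steps 2--3: setting up the metric area formula and the blow-up analysis that identifies the approximate tangent norm (Kirchheim's machinery), together with the precise computation of the mass measure of a pushforward of $\Lbrack\theta\Rbrack$ under a bilipschitz map, which is the technical heart of [AK]; the reduction in Step 1 and the John-ellipsoid estimate in Step 4 are comparatively routine.
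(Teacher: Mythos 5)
The paper does not prove this statement: it is stated as a review of Ambrosio--Kirchheim's representation theorem for integer rectifiable currents (essentially their Theorem 4.6 together with the area-factor bounds of their Lemma 9.2), cited in Subsection~\ref{sbs-currents} as background and applied later as a black box in the proof of Theorem~\ref{thm-GH=IF}. There is therefore no in-paper proof to compare against.

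As a reconstruction of the Ambrosio--Kirchheim argument, your sketch identifies the right ingredients and organizes them in the right order: mutual singularity of the $\|\varphi_{i\#}\Lbrack\theta_i\Rbrack\|$ to reduce to a single bilipschitz chart, Kirchheim's metric differentiability together with the area formula to exhibit $\mathcal{H}^m\rstr\set(T)$ as a reference measure, identification of the pushforward mass as an absolutely continuous measure with a density depending only on the approximate tangent norm, and John's ellipsoid (plus the cube comparison) for the two-sided bound on that density. Two points to watch. First, in [AK] the quantity you call $\lambda$ is defined intrinsically as the \emph{area factor} $\lambda_V$ of the $m$-dimensional normed tangent space $V=(\R^m,\|\cdot\|_a)$, via an extremal problem over linear maps to $\R^m$, rather than as a ratio of a ``mass Jacobian'' to a Hausdorff Jacobian; your ratio formulation is equivalent after normalization but would need a line or two to reconcile with \eqref{eq-set}, whose $\liminf$ is normalized by $\omega_m r^m$ (Euclidean), not by the $\|\cdot\|_a$-ball volume. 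Second, the passage from the representation $\|T\|=\theta_T\lambda\,\mathcal H^m\rstr\set(T)$ to the pointwise density statement \eqref{eqn-lem-weight-new-key} at $\mathcal H^m$-a.e.\ point is not purely a blow-up of the current; it uses that $\mathcal H^m\rstr\set(T)$ has lower density $\ge$ a dimensional constant at a.e.\ point of a countably rectifiable set (a Besicovitch-type density result) so that the Radon--Nikod\'ym density and $\Theta_{*m}$ agree a.e. Your outline implicitly folds this into the blow-up step; making it explicit would close the only real gap in the sketch.
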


\begin{lem}[Ambrosio-Kirchheim]\label{lemma-setrect}
If $T \in \intrectcurr_m\left(Z\right)$, then $\set(T)$ is a countably $\mathcal{H}^m$ rectifiable metric space, ie. there exist a countable
collection of bilipschitz charts
\be
\psi_i: A_i \subset \mathbb{R}^n \to U_i \subset Z
\ee
where $A_i$ are Borel measurable sets and 
\be
\mathcal{H}^n(\, \set(T) \, \setminus \,\cup_{i=1}^\infty U_i\,) =0.
\ee
\end{lem}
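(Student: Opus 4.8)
The plan is to extract the required biLipschitz charts directly from a parametrization of $T$ and then use the lower-density characterization of $\set(T)$ to check that these charts exhaust $\set(T)$ up to an $\mathcal{H}^m$-null set. Since $T\in\intrectcurr_m(Z)$, Definition \ref{def-integercur} furnishes a parametrization $(\{\varphi_i\},\{\theta_i\})$ with each $\varphi_i:A_i\subset\R^m\to Z$ biLipschitz onto its image, the $A_i$ precompact Borel with pairwise disjoint images $U_i:=\varphi_i(A_i)$, $\theta_i\in L^1(A_i,\N)$, and the representation (\ref{param-representation}) holding together with the measure-level mass additivity $\|T\|=\sum_i\|\varphi_{i\#}\Lbrack\theta_i\Rbrack\|$ recorded in Definition \ref{def-integercur}. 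Setting $\psi_i:=\varphi_i:A_i\to U_i$ we already have a countable family of biLipschitz charts on Borel subsets of $\R^m$, so the whole statement reduces to showing
\[
\mathcal{H}^m\Big(\set(T)\setminus\bigcup_i U_i\Big)=0.
\]

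First I would observe that $\|T\|$ is concentrated on $\bigcup_i U_i$. Indeed, for each $i$ the pushforward current $\varphi_{i\#}\Lbrack\theta_i\Rbrack$ depends on its first argument only through its values on $A_i$, so the finite Borel measure $\|\varphi_{i\#}\Lbrack\theta_i\Rbrack\|$ vanishes on $Z\setminus U_i$; combined with the mass additivity $\|T\|=\sum_i\|\varphi_{i\#}\Lbrack\theta_i\Rbrack\|$, this gives $\|T\|(Z\setminus\bigcup_i U_i)=0$. Put $S:=\set(T)\setminus\bigcup_i U_i$. Then $S$ is $\mathcal{H}^m$-measurable (the lower density $x\mapsto\Theta_{*m}(\|T\|,x)$ is Borel), it satisfies $\|T\|(S)=0$, and by Definition \ref{defn-set} every $x\in S$ has $\Theta_{*m}(\|T\|,x)>0$.

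Finally I would invoke the standard density comparison for Hausdorff measure, valid in any metric space via a $5r$-covering argument: if $\mu$ is a finite Borel measure and $E$ is a Borel set with $\Theta_{*m}(\mu,\cdot)\ge\eta>0$ on $E$, then $\mathcal{H}^m(E)\le c(m)\,\eta^{-1}\,\mu(E)$. Stratifying $S=\bigcup_{k\ge1}S_k$ with $S_k:=\{x\in S:\Theta_{*m}(\|T\|,x)>1/k\}$, this yields $\mathcal{H}^m(S_k)\le c(m)\,k\,\|T\|(S_k)\le c(m)\,k\,\|T\|(S)=0$ for every $k$, hence $\mathcal{H}^m(S)=0$, as required. (Once Lemma \ref{lemma-weight} is on hand one could instead read this off the identity $\|T\|=\theta_T\lambda\,\mathcal{H}^m\rstr\set(T)$ with $\lambda\ge m^{-m/2}$ and $\theta_T$ supported in $\bigcup_i U_i$, but that argument still uses the positivity of the lower density on $\set(T)$.) Everything here is routine bookkeeping with the parametrization except the density comparison lemma, which is the one step I would treat carefully.
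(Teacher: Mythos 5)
The paper states this lemma as a quoted result of Ambrosio--Kirchheim (Theorems 4.5/4.6 of \cite{AK}) and gives no proof of its own, so there is no internal argument to compare against. Your proof is correct and is essentially the standard derivation underlying the cited result: you take the charts $\psi_i=\varphi_i$ straight from the parametrization in Definition~\ref{def-integercur}, observe that the mass-measure additivity there forces $\|T\|(Z\setminus\bigcup_i U_i)=0$ because each $\|\varphi_{i\#}\Lbrack\theta_i\Rbrack\|$ is dominated by $(\Lip\varphi_i)^m\,\varphi_{i\#}\|\Lbrack\theta_i\Rbrack\|$ and hence concentrated on $U_i=\varphi_i(A_i)$, and then dispose of $S=\set(T)\setminus\bigcup_i U_i$ with the Federer-type density comparison (a $5r$-covering / Vitali argument, valid in any metric space); the stratification $S=\bigcup_k S_k$ is needed precisely because the lower density is only assumed positive, not uniformly bounded below, and the step is sound since $\Theta_{*m}\le\Theta^{*m}$, so the usual upper-density form of the comparison applies. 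This is cleaner than invoking Lemma~\ref{lemma-weight}, which in Ambrosio--Kirchheim's development actually comes \emph{after} the rectifiability of $\set(T)$ and therefore cannot be used to prove it without circularity --- your parenthetical remark acknowledging this is appropriate. One small point of care worth making explicit if you write this up: $\varphi_i(A_i)$ is Borel because $\varphi_i$ is an injective Lipschitz map defined on a precompact Borel set, which is what makes the concentration statement meaningful and the set $S$ Borel.
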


Finally, we define integral currents. 

\begin{defn}[Ambrosio-Kirchheim]\label{defn-integralcur}
An integral current is an integer rectifiable current,  $ T\in\intrectcurr_m(Z)$, such that $\partial T$ is also a current of finite mass where $\partial T$ is defined by:
\begin{equation}
\partial T \left(f, \pi_1,..., \pi_{m-1}\right) = T \left(1, f, \pi_1,..., \pi_{m-1}\right)
\end{equation}
We denote the space of $m$ dimensional integral currents on $Z$ by $\intcurr_m\left(Z\right)$ .
\end{defn}

%%%%%%%%%%%%%%%%%%%%%%%%%%%%%%%%%%%%%%%%%%
%%%%%%%%%%%%%%%%%%%%%%%%%%%%%%%%%%%%%%%%%%

\subsection{Integral Current Spaces and SWIF Distance}\label{sbs-IFdistance}

In this subsection we define integral current spaces, Definition \ref{defn-intcurrspace}, the Sormani-Wenger intrinsic flat distance between these spaces, Definition \ref{defn-IFdistance} and state a theorem that shows that the SWIF limit is contained in the GH limit, Theorem \ref{thm-FlatInGH}.

\begin{defn} [Sormani-Wenger]\label{defn-intcurrspace}
Let $\left(Y,d\right)$ be a metric space and $T \in \intcurr_m(\bar{Y})$. If $\set\left(T\right)=Y$ then $(Y,d,T)$ is called an $m$ dimensional integral current space. 
$T$ is called the integral current structure.   $Y$ is called the canonical set.

For technical reasons the zero integral current space is defined. It is denoted by ${\bf{0}}$ and has current $T=0$. 

We denote by $\mathcal{M}^m$ the space of $m$ dimensional integral current spaces and by $\mathcal{M}_0^m$ the space of $m$ dimensional integral current spaces whose canonical set is precompact. 
\end{defn}

Note that we can obtain an integral current space $(M,d,T)$ from a compact oriented Riemannian manifold $(M^n,g)$ (with or without boundary). In this case, $d$ represents the metric induced by $g$ and $T$ is integration over $M$:
\be
T(f, \pi_1,..., \pi_n)=\int_M f d\pi_1 \wedge \cdots \wedge d\pi_n.
\ee

\begin{defn}[Sormani-Wenger]\label{defn-IFdistance}
Let $(Y_i,d_i,T_i) \in \mathcal{M}^m$. Then the intrinsic flat distance between
these two integral current spaces is defined by
\begin{align}\label{eqn-local-defn}
d_{\Fm}\left((Y_1,d_1,T_1),(Y_2,d_2,T_2)\right) 
=\inf\{ d_F^Z(\varphi_{1\#}T_1,\varphi_{2\#}T_2)\}\\
=\inf\{\mass\left(U\right)+\mass\left(V\right)\},
\end{align}
where the infimum is taken over all complete metric spaces,
$\left(Z,d\right)$, and all integral currents,
$U\in\intcurr_m\left(Z\right), V\in\intcurr_{m+1}\left(Z\right)$,
for which there exist isometric embeddings
$
\varphi_i : \left(\bar{Y_i}, d_i\right)\to \left(Z,d\right)
$
with
\begin{equation} \label{eqn-Federer-Flat-2}
{\varphi_1}_\# T_1- {\varphi_2}_\# T_2=U+\bdry V.
\end{equation}
The ${\bf{0}}$ $m$-dimensional integral current isometrically embeds into any $Z$
with $\varphi_\#0=0 \in \intcurr_m\left(Z\right)$. 
\end{defn}

It was proven in Theorem 3.27 of \cite{SorWen2} that $d_{\Fm}$ is a distance on the class of precompact integral current spaces, $\mathcal{M}_0^m$.

We apply the following compactness theorem in all of our SWIF theorems.   It is
proven by Sormani-Wenger applying a combination of Gromov's Compactness Theorem
and Ambrosio-Kirchheim's Compactness Theorem.

\begin{thm}[Sormani-Wenger]\label{thm-FlatInGH}
Let $\left(X_j, d_j, T_j\right)$ be a sequence of $m$ dimensional integral current spaces. If there exist $D$, $M$ and $N:(0,\infty) \to \N$  such that for all $j$
\be
\diam(X_j) \leq D, \,\,\, \mass(T_j) + \mass(\bdry T_j) \leq M
\ee
and, for all $\varepsilon$ there are $N(\varepsilon)$  $\varepsilon$-balls that cover $X_j$, then
\be
\left(X_{j_k}, d_{j_k}\right) \GHto \left(X,d_X\right)\,\,\,\text{and}\,\,
\left(X_{j_k}, d_{j_k}, T_{j_k}\right) \Fto \left(Y,d,T\right),
\ee
where either $\left(Y,d,T\right)$ is an $m$ dimensional integral current space
with $Y \subset X$
or it is the ${\bf 0}$ current space.
\end{thm}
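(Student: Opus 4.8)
The plan is to combine Gromov's compactness and embedding theorems (Theorems~\ref{thm-GromovCompactness} and \ref{thm-GromovEmbedding}) with Ambrosio--Kirchheim's compactness theorem for integral currents \cite{AK}, and then to show that the resulting flat limit current is supported in the Gromov--Hausdorff limit. First I would note that the equibounded hypothesis makes each $X_j$ totally bounded, so its completion $\bar X_j$ is compact with $\diam(\bar X_j)\le D$. By Theorem~\ref{thm-GromovCompactness} a subsequence of $(\bar X_j,d_j)$ converges in GH sense to a compact metric space $(X,d_X)$; passing to a further subsequence and applying Theorem~\ref{thm-GromovEmbedding}, I may assume there is a common compact metric space $(Z,d)$ and isometric embeddings $\varphi_j\colon(\bar X_j,d_j)\to(Z,d)$, $\varphi_\infty\colon(X,d_X)\to(Z,d)$ with $\varphi_j(\bar X_j)\to\varphi_\infty(X)$ in the Hausdorff sense inside $Z$. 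Since $X_j$ is dense in $\bar X_j$, this also gives $(X_j,d_j)\GHto(X,d_X)$.

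Next I would pass to the pushforward currents $S_j:=\varphi_{j\#}T_j\in\intcurr_m(Z)$. Isometric embeddings are $1$-Lipschitz, so $\mass(S_j)\le\mass(T_j)$ and $\mass(\bdry S_j)=\mass(\varphi_{j\#}\bdry T_j)\le\mass(\bdry T_j)$, whence $\mass(S_j)+\mass(\bdry S_j)\le M$ for every $j$, and every $S_j$ is supported in the compact space $Z$. Ambrosio--Kirchheim's compactness theorem then provides, after a further subsequence, an integral current $T\in\intcurr_m(Z)$ with $S_{j_k}\to T$ in the flat sense, that is, $S_{j_k}-T=A_k+\bdry B_k$ with $A_k\in\intcurr_m(Z)$, $B_k\in\intcurr_{m+1}(Z)$ and $\mass(A_k)+\mass(B_k)\to0$.

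The heart of the argument is to show $\spt T\subset\varphi_\infty(X)$. Fix $\delta>0$ and let $U_\delta$ denote the open $\delta$-neighborhood of $\varphi_\infty(X)$ in $Z$. Hausdorff convergence gives $\varphi_{j_k}(\bar X_{j_k})\subset U_\delta$ for $k$ large, hence $\spt S_{j_k}\subset U_\delta$ and, for every $(f,\pi)\in\mathcal D^m(Z)$ with $\spt f\subset Z\setminus\bar U_\delta$, $S_{j_k}(f,\pi)=0$. Meanwhile $|A_k(f,\pi)|\le\|f\|_\infty\prod_i\Lip(\pi_i)\,\mass(A_k)$ and $|\bdry B_k(f,\pi)|=|B_k(1,f,\pi_1,\dots,\pi_{m-1})|\le\Lip(f)\prod_{i<m}\Lip(\pi_i)\,\mass(B_k)$ both tend to $0$, so $T(f,\pi)=0$ for all such tuples. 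Therefore $\|T\|$ charges no open subset of $Z\setminus\bar U_\delta$, i.e.\ $\spt T\subset\bar U_\delta$; letting $\delta\downarrow0$ and using that $\varphi_\infty(X)$ is closed gives $\set(T)\subset\spt T\subset\varphi_\infty(X)$.

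Finally I would identify the limit space. If $T=0$ the flat limit is the zero integral current space $\mathbf 0$. Otherwise set $Y:=\set(T)$ with the metric restricted from $Z$; since $\spt T=\overline{\set(T)}=\bar Y\subset\varphi_\infty(X)$ we have $T\in\intcurr_m(\bar Y)$, so $(Y,d,T)\in\mathcal M^m$ is an $m$-dimensional integral current space with $Y\subset\varphi_\infty(X)\cong X$. Identifying $\bar Y$ with its image in $Z$, the relation $S_{j_k}-T=A_k+\bdry B_k$ shows $d_{\mathcal F}\big((X_{j_k},d_{j_k},T_{j_k}),(Y,d,T)\big)\le\mass(A_k)+\mass(B_k)\to0$, that is, $(X_{j_k},d_{j_k},T_{j_k})\Fto(Y,d,T)$. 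The step I expect to be most delicate is the support localization together with this last bookkeeping: keeping the successive subsequence extractions and the fixed embeddings $\varphi_j$ coherent, justifying that the mass measure charges no open set on which $T$ vanishes, and verifying that $\set(T)$ equipped with the restricted metric and structure $T$ is honestly an integral current space contained in $X$; the two compactness theorems are invoked as black boxes.
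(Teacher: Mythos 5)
Your route is the one the paper itself attributes to Sormani--Wenger for this theorem (the paper only quotes the result from \cite{SorWen2} and does not reprove it): Gromov's compactness and embedding theorems to place all the spaces in one compact $Z$, Ambrosio--Kirchheim compactness for the pushed-forward currents, and a support-localization argument to put $\set(T)$ inside the Hausdorff limit. The localization argument and the final bookkeeping with $d_{\mathcal F}\le\mass(A_k)+\mass(B_k)$ are fine as far as they go.

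There is, however, one genuine gap at the step you label as the application of Ambrosio--Kirchheim. Their compactness/closure theorem yields a subsequence $S_{j_k}$ converging \emph{weakly} to an integral current $T$, i.e.\ $S_{j_k}(f,\pi)\to T(f,\pi)$ for every test tuple; it does not hand you the decomposition $S_{j_k}-T=A_k+\bdry B_k$ with $A_k\in\intcurr_m(Z)$, $B_k\in\intcurr_{m+1}(Z)$ and $\mass(A_k)+\mass(B_k)\to 0$. Passing from weak convergence to flat convergence with \emph{integral} fillings, for a sequence with uniformly bounded $\mass(S_j)+\mass(\bdry S_j)$ supported in a compact set, is a separate nontrivial theorem of Wenger \cite{Wenger-flat}, typically applied after a further isometric (Kuratowski-type) embedding of $Z$ into a Banach space. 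Since your endgame runs entirely through that decomposition --- both the intrinsic flat distance estimate and, as you set it up, the vanishing of $T$ on tuples supported away from $U_\delta$ --- this ingredient must be invoked explicitly; without it the proof does not close. (A minor remark: once you have weak convergence, the vanishing $T(f,\pi)=\lim_k S_{j_k}(f,\pi)=0$ for $\spt f\subset Z\setminus \bar U_\delta$ is immediate, so the support localization is actually easier from weak convergence than from the flat decomposition you use.)
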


\begin{rmrk}\label{rmrk-FlatInGH}
In a later theorem, Sormani-Wenger constructed a common compact metric space $Z$ and isometric embeddings $\varphi_j: X_j \to Z$ and $\varphi: X \to Z$ such that 
\be
\varphi_j(X_{j_k}) \Hto  \varphi(X) \,\,\text{and}\,\,\varphi_{j_k\#}(T_{j_k}) \Fto \varphi_{\#}(T),
\ee
where $\set(\varphi_{\#}(T)) \subset X$. Here, $\set(\varphi_{\#}(T))$ can be the empty set.   Note that this is not proven using the common compact metric space constructed by Gromov in his work.  In fact, the $Z$ constructed in \cite{SorWen2} is a countably $\mathcal{H}^{n+1}$ rectifiable metric space.
\end{rmrk}

\subsection{Sormani-Wenger: GH=SWIF when there is no boundary}\label{sbs-SWgh=if}

For  a sequence of compact oriented Riemannian manifolds, $(M_j,g_j)$,  with nonnegative Ricci curvature, $\partial M_j=\emptyset$ and two sided uniform volume bounds,  Sormani-Wenger  proved that the GH limit, $X$, of these type of sequences agree with the SWIF limit, Y, \cite{SorWen1}  (cf. Theorem \ref{thm-SWgh=if} below).
In \cite{SorWen1}, Sormani-Wenger prove a far more general theorem about a larger
class of integral current spaces without boundary, and thus the proof is quite technically
complicated.  In this subsection we present an adapted and simplified version of their proof specialized to oriented Riemannian manifolds without boundary based upon Sormani's Geometry Festival presentation of the result.  Moreover, Portegies-Sormani \cite{PorSor}  provides
detailed proofs of some ideas presented there.

\begin{thm}[Sormani-Wenger, Theorem 7.1 in \cite{SorWen1}]\label{thm-SWgh=if}
Let $(M_j,g_j)$ be a sequence of  $n$ dimensional oriented compact Riemannian manifolds with no boundary
that satisfy the following  
\be\label{eq-SWgh=if1}
\ric(M_j) \geq 0, \,\,\, \diam(M_j) \leq D \text{   and  } \vol(M_j) \geq v
\ee
for some constants $v, D >0$.
Then there exist  subsequence and an $n$-integral current space $(X,d,T)$ such that 
\be\label{eq-SWgh=if2}
(M_{j_k},d_{j_k}) \GHto (X,d) 
\ee
and 
\be
(M_{j_k},d_{j_k}, T_{j_k}) \Fto (X,d,T), 
\ee
where $T_j$ is integration of top forms over $M_j$. 
\end{thm}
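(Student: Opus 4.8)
The plan is to pull the limit space out of the intrinsic flat compactness theorem and then use the Cheeger--Colding structure theory to show that no $n$-volume is lost in the flat limit, so that the flat limit is nonzero and its canonical set is all of the Gromov--Hausdorff limit. \emph{Extracting the limits.} Since $\ric(M_j)\ge 0$ and $\diam(M_j)\le D$, Bishop--Gromov volume comparison gives $\vol(M_j)\le \omega_n D^n=:V$, and together with $\vol(M_j)\ge v$ the relative estimate $\vol(B(p,r))\ge c_n\, v\,(r/D)^n$ for every $p\in M_j$ and $0<r\le D$, with $c_n$ a dimensional constant. Because $T_j$ is integration over $M_j$ and $\partial M_j=\emptyset$, we have $\mass(T_j)=\vol(M_j)\le V$ and $\mass(\partial T_j)=0$, and by Theorem~\ref{thm-GromovRic} the sequence is equibounded; Theorem~\ref{thm-FlatInGH} then provides a subsequence with $(M_{j_k},d_{j_k})\GHto(X,d)$ and $(M_{j_k},d_{j_k},T_{j_k})\Fto(Y\subset X,d,T)$, where a priori $Y$ could be the zero current space. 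By Remark~\ref{rmrk-FlatInGH} we may regard all $M_{j_k}$ and $X$ as isometrically embedded in a common compact metric space $Z$, with $\varphi_{j_k\#}T_{j_k}\Fto\varphi_\# T$ in $Z$; lower semicontinuity of the boundary mass under flat convergence forces $\partial T=0$.

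\emph{Cheeger--Colding input.} The hypotheses let us apply Theorems~\ref{thm-chco} and \ref{thm-RegularSet}: for every $x\in X$, $r>0$ and $x^k\in M_{j_k}$ with $x^k\to x$ one has $\mathcal{H}^n(B(x,r))=\lim_k\vol(B(x^k,r))$, so $\mathcal{H}^n(X)=\lim_k\vol(M_{j_k})\in[v,V]$ and, by the previous paragraph, $\mathcal{H}^n(B(x,r))\ge c_n\,v\,(r/D)^n$ for all $x\in X$, $0<r\le D$; moreover the nonregular set of $X$ is $\mathcal{H}^n$-null and every regular point has tangent cone isometric to $\R^n$, hence density $1$. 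Thus $X$ is a compact countably $\mathcal{H}^n$-rectifiable space of finite positive measure, and $\mathcal{H}^n$-a.e.\ $x\in X$ is regular.

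\emph{Heart of the proof: regular points lie in $\set(T)$.} Fix a regular point $x\in X$; I claim $\Theta_{*n}(\|\varphi_\# T\|,x)\ge 1$. The idea is a blow-up: rescaling $d$ by $1/r$ around $x$ and each $d_{j_k}$ by $1/r$ around $x^k\to x$, the rescaled balls converge in GH sense to the unit ball of $\R^n$ as $r\to 0$, so Colding's estimate (Theorem~\ref{thm-Colding-volume}) gives $\vol(B(x^k,r))\ge(1-\varepsilon)\omega_n r^n$ for $r$ small and $k$ large. On the current side, since $\partial(\varphi_\# T)=0$, for a.e.\ $r$ the restriction $\varphi_\# T\rstr B(x,r)$ is an integral filling of the slice $\langle\varphi_\# T,\rho_x,r\rangle$ with $\rho_x=d_Z(\cdot,x)$; the slicing theory of Ambrosio--Kirchheim combined with $\varphi_{j_k\#}T_{j_k}\Fto\varphi_\# T$ shows that for a.e.\ $r$ the currents $\varphi_{j_k\#}(T_{j_k}\rstr B(x^k,r))$ and their boundary slices converge in flat norm to $\varphi_\#(T\rstr B(x,r))$ and its boundary slice. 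Carrying out the blow-up compatibly, so that the rescaled restrictions converge, in a common space, to a multiplicity-one integer current filling a sphere slice in $\R^n$, namely integration over $B(0,1)\subset\R^n$, one concludes that $\|\varphi_\# T\|(B(x,r))$ cannot be asymptotically smaller than $\vol(B(x^k,r))\to\mathcal{H}^n(B(x,r))$; hence $\|\varphi_\# T\|(B(x,r))\ge(1-\varepsilon)\omega_n r^n$ for small $r$, giving $\Theta_{*n}(\|\varphi_\# T\|,x)\ge 1-\varepsilon$ and $x\in\set(\varphi_\# T)=Y$.

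\emph{Conclusion and main obstacle.} Since the nonregular set is $\mathcal{H}^n$-null, the previous step gives $\mathcal{H}^n(X\setminus Y)=0$; in particular $T\ne 0$. By Lemma~\ref{lemma-weight}, $\|T\|=\theta_T\lambda\,\mathcal{H}^n\rstr Y$ with $\theta_T\ge 1$ and $\lambda\ge n^{-n/2}$, so for every $p\in X$
\be
\|T\|(B(p,r))\ \ge\ n^{-n/2}\,\mathcal{H}^n\big(B(p,r)\cap Y\big)\ =\ n^{-n/2}\,\mathcal{H}^n(B(p,r))\ \ge\ n^{-n/2}\,c_n\,v\,(r/D)^n,
\ee
whence $\Theta_{*n}(\|T\|,p)\ge n^{-n/2}c_n v/(\omega_n D^n)>0$ and $p\in\set(T)$. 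Therefore $\set(T)=X$, i.e.\ $(X,d,T)$ is itself an integral current space and is the intrinsic flat limit of $(M_{j_k},d_{j_k},T_{j_k})$. The main obstacle is the heart of the proof: the intrinsic flat distance is not lower semicontinuous on balls, so one genuinely needs the Cheeger--Colding local structure — absence of thin necks (via Bishop--Gromov), Euclidean tangent cones, and Colding's volume estimate — to certify that the $n$-volume concentrated in $B(x^k,r)$ neither cancels nor escapes in the flat limit. This is precisely where the argument breaks for manifolds with boundary, where ``splines'' do allow volume to disappear (cf.\ Example~\ref{ex-oneSpline}).
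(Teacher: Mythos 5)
Your proposal correctly sets up the extraction of GH and SWIF limits and correctly invokes Cheeger--Colding structure theory (regular points dense, tangent cones $\R^n$, Colding's volume estimate). It also correctly identifies the central difficulty: the SWIF limit can a priori lose mass, so one needs some mechanism that forbids it. The gap is precisely in the ``heart of the proof,'' where that mechanism is asserted rather than supplied.

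Concretely, you write that ``carrying out the blow-up compatibly, so that the rescaled restrictions converge\dots to a multiplicity-one integer current filling a sphere slice in $\R^n$, one concludes that $\|\varphi_\#T\|(B(x,r))$ cannot be asymptotically smaller than $\vol(B(x^k,r))$.'' But this is the thing to be proved, and nothing in your argument rules out the ``thin spline'' degeneration you mention at the end: a GH-convergence statement on balls, plus slicing, gives you that $\varphi_{j_k\#}\bigl(T_{j_k}\rstr B(x^k,r)\bigr)$ converges in flat norm, but mass is only \emph{lower} semicontinuous, i.e.\ $\mass(\text{limit}) \le \liminf_k \mass$. That inequality points the wrong way; by itself it allows the limit to be the zero current, and GH-closeness to Euclidean balls is no obstruction to such collapse. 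So the sentence ``one concludes'' is exactly the place where the argument is missing.

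The paper's proof plugs this hole with the filling volume. The two properties you need are $(i)$ $\mass(N)\ge\fillvol(\partial N)$ for any integral current space $N$, and $(ii)$ $|\fillvol(\partial M_1)-\fillvol(\partial M_2)|\le d_{\mathcal F}(M_1,M_2)$ (Theorem~\ref{thm-fillvolCont}), i.e.\ filling volume \emph{is} continuous under SWIF convergence, unlike mass. Then the chain is: Colding's volume estimate gives $\vol(B(x_j,\rho))\ge(1-\delta)\omega_n\rho^n$ for all small $\rho$ near a regular $x$; feeding this into Perelman's Main Lemma (Theorem~\ref{thm-Perelman}) yields quantitative local contractibility of balls; the Greene--Petersen/Sormani--Wenger filling estimate (Theorem~\ref{thm-fillvol}) converts that contractibility into $\fillvol(\partial B(x_j,r))\ge C_k r^n$; and Theorem~\ref{thm-fillvolCont} plus Theorem~\ref{thm-SorArzela} pass this lower bound to $\fillvol(\partial B(x,r))$, hence to $\|T\|(B(x,r))$, so $\Theta_{*n}(\|T\|,x)>0$. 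Without the filling-volume Lipschitz estimate, or some other quantity that is both a lower bound for mass and continuous under $d_{\mathcal F}$, the blow-up picture does not force the restricted limit current to be nonzero. The rest of your proof (treating the nonregular points via Lemma~\ref{lemma-weight} and concluding $\set(T)=X$) is the same as the paper's and would go through once this step is repaired.
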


From (\ref{eq-SWgh=if1}) by Gromov's Compactness theorem \cite{Gromov-metric}, Sormani-Wenger obtain a subsequence converging to a metric
space $(X,d)$ in GH sense, (\ref{eq-SWgh=if2}).  Then applying Sormani-Wenger's theorem \cite{SorWen2} (cf. Theorem~\ref{thm-FlatInGH} above), they get a further subsequence converging in SWIF sense to 
an integral current space $(Y,d_Y,T)$. Note that by Remark \ref{rmrk-FlatInGH} we can suppose that $(M_{j_k},d_{j_k})$, $(X,d)$, $(M_{j_k},d_{j_k}, d_{j_k})$, $(Y,d_Y,T)$ lie in a common metric space. 

By the definition of an integral current space, $x \in Y$ if and only if 
\be 
\liminf_{r \to 0} \frac{  ||T||(B(x,r)) }{\omega_n r^n} > 0.
\ee
Then, to prove that the GH limit coincides with the SWIF limit they estimate 
$||T||(B(x,r))$ for all $x \in X$.

Sormani-Wenger show that for each $x \in \mathcal R(X)$, where $\mathcal R(X)$ denotes the regular points in Cheeger-Colding sense of $X$ (see Definition \ref{defn-regpt}) there is $C(x) >0$ and $r_0(x) >0$ such that 
\be
||T||(B(x,r)) > C(x)r^n \qquad \forall r \leq r_0.
\ee
We now review how they prove this.  Since the mass measure is only lower semicontinuous with respect to SWIF convergence \cite{AK}, they use the notion of filling volume of a current \cite{SorWen1}. The filling volume by definition is smaller than the mass and is  continuous with respect to SWIF convergence:

\begin{defn}\label{defn-fillvol} (c.f. \cite{PorSor})
Given an $n$-integral current space $N=(Y,d,T)$, $n \geq 1$, define  the filling volume of $\bdry N$
by 
\be
\fillvol(\bdry N)= \inf \{ \mass(S) \, | S \text{is an $n+1$ integral current space such that } \bdry S= \bdry N\}.
\ee
That is, there is a current preserving isometry $\varphi: \bdry S \to \bdry N$ such that $\varphi_\sharp \bdry S = \bdry N$.
\end{defn}

Thus, from the definition of filling volume and mass it follows that
\be 
||T||(Y) = \mass(N) \geq \fillvol(\bdry N). 
\ee

The continuity of the filling volume with respect to SWIF convergence follows from the following theorem. This fact was first observed by Sormani-Wenger \cite{SorWen1} building upon work by Wenger on flat convergence of integral currents in metric spaces  \cite{Wenger-flat}.  The precise statement given here is
Theorem 2.48 in work of Portegies-Sormani \cite{PorSor}: 

\begin{thm}[cf. Portegies-Sormani \cite{PorSor}]\label{thm-fillvolCont}
For any pair of integral current spaces, $M_i$, we have
\be
\fillvol(\bdry M_1) \leq \fillvol(\bdry M_2) + d_{\mathcal F}(M_1,M_2).
\ee
\end{thm}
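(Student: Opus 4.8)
\emph{Proof strategy.} The inequality has the shape of a one-sided triangle inequality, and I would prove it by concatenating an almost-optimal filling of $\bdry M_2$ with an almost-optimal flat decomposition realizing $d_{\mathcal{F}}(M_1,M_2)$, exploiting the fact that taking the boundary of a flat decomposition annihilates its top-dimensional piece. Write $M_i=(Y_i,d_i,T_i)$ with $T_i$ an $n$-dimensional integral current; we may assume $\fillvol(\bdry M_2)<\infty$ and $d_{\mathcal{F}}(M_1,M_2)<\infty$, since otherwise there is nothing to prove. Fix $\vare>0$. By Definition \ref{defn-IFdistance}, choose a complete metric space $(Z,d_Z)$, isometric embeddings $\varphi_i\colon(\bar Y_i,d_i)\to (Z,d_Z)$, and integral currents $U\in\intcurr_n(Z)$, $V\in\intcurr_{n+1}(Z)$ with
\be
\varphi_{1\#}T_1-\varphi_{2\#}T_2=U+\bdry V
\qquad\text{and}\qquad
\mass(U)+\mass(V)<d_{\mathcal{F}}(M_1,M_2)+\vare .
\ee
Applying $\bdry$ to this identity and using $\bdry\bdry V=0$ together with the naturality $\bdry\varphi_{i\#}T_i=\varphi_{i\#}\bdry T_i$ of the boundary under pushforward yields
\be\label{eq-plan-Ubdry}
\varphi_{1\#}\bdry T_1-\varphi_{2\#}\bdry T_2=\bdry U .
\ee

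Next, by Definition \ref{defn-fillvol}, choose an $n$-dimensional integral current space $S_2=(W_2,d_{W_2},Q_2)$ with $\mass(Q_2)<\fillvol(\bdry M_2)+\vare$ together with a current-preserving isometry $\iota\colon\bdry S_2\to\bdry M_2$. Then I would amalgamate $\bar W_2$ with $Z$ along the isometric identification of $\set(\bdry Q_2)\subset\bar W_2$ with $\varphi_2(\set(\bdry T_2))\subset Z$ furnished by $\varphi_2\circ\iota$, obtaining a complete metric space $(Z',d_{Z'})$ into which $Z$ and $\bar W_2$ embed isometrically. I extend $\varphi_1,\varphi_2$ to $Z'$ and push $U,V,Q_2$ forward to $Z'$, so that in $Z'$ one has $\bdry Q_2=\varphi_{2\#}\bdry T_2$. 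Since the mass of $Q_2$ and of $U$ is concentrated on their sets (Lemma \ref{lemma-weight}), $R:=Q_2+U$ is an $n$-dimensional integral current in $Z'$, and by (\ref{eq-plan-Ubdry}),
\be
\bdry R=\bdry Q_2+\bdry U=\varphi_{2\#}\bdry T_2+\bigl(\varphi_{1\#}\bdry T_1-\varphi_{2\#}\bdry T_2\bigr)=\varphi_{1\#}\bdry T_1 .
\ee
Hence $(\set(R),d_{Z'},R)$ is an $n$-dimensional integral current space (in the sense of Definition \ref{defn-intcurrspace}) whose boundary is, via $\varphi_1$, current-isometric to $\bdry M_1$; it is therefore an admissible competitor in the definition of $\fillvol(\bdry M_1)$, so by subadditivity of mass
\be
\fillvol(\bdry M_1)\le \mass(R)\le \mass(Q_2)+\mass(U)<\fillvol(\bdry M_2)+d_{\mathcal{F}}(M_1,M_2)+2\vare .
\ee
Letting $\vare\to 0$ gives the claim. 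The degenerate cases in which $M_1$ or $M_2$ is the zero integral current space are covered by allowing the corresponding currents to vanish.

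\textbf{Main obstacle.} The only step requiring genuine care is the amalgamation placing an almost-optimal flat decomposition and an almost-optimal filling of $\bdry M_2$ into one complete metric space $Z'$: one must verify that the two isometric copies of $\set(\bdry T_2)$ can be glued and that the pushforwards of $Q_2$ and of $U$ to $Z'$ really agree along this identified set, which is what legitimizes the identity $\bdry Q_2=\varphi_{2\#}\bdry T_2$ in $Z'$ and hence the computation of $\bdry R$. This is the standard gluing of metric spaces over a common closed subspace that is used throughout Sormani--Wenger's development of the intrinsic flat distance, so I would invoke it rather than redevelop it. Everything else is bookkeeping; the conceptual point is simply that the $(n{+}1)$-dimensional term $V$ disappears under $\bdry$, so that the only excess cost over $\fillvol(\bdry M_2)$ is $\mass(U)\le d_{\mathcal{F}}(M_1,M_2)+\vare$.
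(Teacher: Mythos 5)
Your argument is correct: it is the standard concatenation proof (decompose $\varphi_{1\#}T_1-\varphi_{2\#}T_2=U+\bdry V$, note that $\bdry$ kills $V$, and add $U$ to a near-optimal filling of $\bdry M_2$ after gluing the ambient spaces along $\set(\bdry T_2)$), which is exactly the route taken in the cited source. The paper itself does not prove this statement but quotes it from Portegies--Sormani, so there is nothing further to compare; the only point deserving the care you already flag is the gluing step, which is the standard Sormani--Wenger amalgamation along isometric closed subsets.
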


We note that the notion of filling volume given in Definition \ref{defn-fillvol} is not exactly the same notion as the Gromov Filling Volume \cite{Gromov-filling}, however many similar properties hold. Gromov's Filling volume is defined using chains rather than integral current spaces and the notion of volume used by Gromov is not the same as Ambrosio-Kirchheim's mass.   

Now, in order to use the notion of filling volume and its continuity under SWIF convergence to estimate $||T||(B(x,r))$ for $x \in \mathcal R (X)$ we state Portegies-Sormani \cite{PorSor}, Lemma \ref{lem-ICSball}, which allows us to view a ball as an integral current space and Sormani \cite{SorAA}, Theorem \ref{thm-SorArzela}, which allows us to take the limits of balls.

\begin{lem}[cf. Lemma 3.1 in \cite{PorSor}]\label{lem-ICSball}
Let $M$ be a Riemannian manifold, $p \in M$. For almost every $r > 0$, the ball $B(p,r)$
with the current restricted from the current structure of the Riemannian manifold, $T$,  
\be
(\set(T \rstr  B(p,r)), d, T \rstr B(p,r)),
\ee
is an integral current space itself.
\end{lem}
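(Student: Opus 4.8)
The plan is to recognize $(\set(T\rstr B(p,r)),d,T\rstr B(p,r))$ as an integral current space directly from Definition~\ref{defn-intcurrspace}: one must check that $T\rstr B(p,r)$ is an $n$-dimensional integral current supported in the closure of its canonical set, and then simply declare that canonical set to be the underlying space with the restricted metric $d$. The first requirement has two parts, integer rectifiability (which will hold for \emph{every} $r$) and finiteness of the mass of the boundary (which is where the ``almost every $r$'' hypothesis is consumed).

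First I would observe that $T$, integration over the oriented Riemannian manifold $M$, is an integral current, and that the restriction of an integer rectifiable current to a Borel set is again integer rectifiable (Ambrosio-Kirchheim \cite{AK}): one intersects each chart domain $A_i$ in a parametrization of $T$ with $\varphi_i^{-1}(B(p,r))$, and the mass decomposition in~(\ref{param-representation}) is inherited. Hence $T\rstr B(p,r)\in\intrectcurr_n$ for every $r$, with $\mass(T\rstr B(p,r))\le\mass(T)<\infty$ and $\mass\big((\bdry T)\rstr B(p,r)\big)\le\mass(\bdry T)<\infty$.

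The key step is to control $\bdry\big(T\rstr B(p,r)\big)$. Set $\rho(x):=d(p,x)$, which is $1$-Lipschitz, so $B(p,r)=\{\rho<r\}$. By the slicing theory of Ambrosio-Kirchheim \cite{AK}, for almost every $r>0$ the slice $\langle T,\rho,r\rangle$ is an $(n-1)$-dimensional integral current with
\[
\bdry\big(T\rstr\{\rho<r\}\big)=(\bdry T)\rstr\{\rho<r\}+\langle T,\rho,r\rangle,
\]
and $\int_0^\infty \mass\big(\langle T,\rho,r\rangle\big)\,dr\le \Lip(\rho)\,\mass(T)=\mass(T)<\infty$, so $\mass\big(\langle T,\rho,r\rangle\big)<\infty$ for almost every $r$. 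For almost every $r$ one also has $\|T\|(\{\rho=r\})=\|\bdry T\|(\{\rho=r\})=0$, so there is no ambiguity in using $B(p,r)$ or $\overline{B(p,r)}$ in these formulas. Combining the two terms, $\mass\big(\bdry(T\rstr B(p,r))\big)\le\mass(\bdry T)+\mass\big(\langle T,\rho,r\rangle\big)<\infty$ for almost every $r$, so $T\rstr B(p,r)$ is an integral current for such $r$.

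Finally I would record that $\spt\|T\rstr B(p,r)\|=\overline{\set(T\rstr B(p,r))}$: by Lemma~\ref{lemma-weight} the mass measure is concentrated on the canonical set, while every point of the canonical set has positive lower density and hence lies in the support. Since $\spt\|\bdry(T\rstr B(p,r))\|\subseteq\spt\|T\rstr B(p,r)\|$ by the general containment of boundary supports, $T\rstr B(p,r)$ restricts to a current in $\intcurr_n(\overline{\set(T\rstr B(p,r))})$. Setting $Y:=\set(T\rstr B(p,r))$ with the restricted metric, the triple $(Y,d,T\rstr B(p,r))$ then satisfies Definition~\ref{defn-intcurrspace}. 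The main obstacle is precisely the slicing step: establishing that the boundary of the truncation has finite mass off a null set of radii, which is exactly Ambrosio-Kirchheim's coarea/slicing estimate applied to the distance function $\rho$, together with the elementary fact that the exceptional radii (where the slice mass is infinite, or where $\|T\|$ or $\|\bdry T\|$ charges the sphere $\{\rho=r\}$) form a Lebesgue-null set.
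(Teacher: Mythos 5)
The paper does not actually prove this lemma; it cites it (``cf.\ Lemma 3.1 in \cite{PorSor}'') and uses it as a black box, so there is no in-paper argument to compare yours against line by line. That said, your proof is correct and is the standard Ambrosio--Kirchheim slicing argument, which is almost certainly what the cited source uses.

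Your decomposition into three requirements is exactly right: integer rectifiability of $T\rstr B(p,r)$ (holds for every $r$ because restriction to a Borel set respects the parametrization $(\{\varphi_i\},\{\theta_i\})$ in Definition~\ref{def-integercur}); finiteness of $\mass\big(\bdry(T\rstr B(p,r))\big)$, which you obtain for a.e.\ $r$ from the slicing identity
$\bdry\big(T\rstr\{\rho<r\}\big)=(\bdry T)\rstr\{\rho<r\}\pm\langle T,\rho,r\rangle$
together with the coarea bound $\int_0^\infty\mass(\langle T,\rho,r\rangle)\,dr\le\Lip(\rho)\,\mass(T)<\infty$; and the bookkeeping that $T\rstr B(p,r)$ can be regarded as an element of $\intcurr_n\bigl(\overline{\set(T\rstr B(p,r))}\bigr)$, which you handle via Lemma~\ref{lemma-weight} (mass measure concentrated on the canonical set) and $\spt\|\bdry S\|\subseteq\spt\|S\|$. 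The observations that the pushforward of the finite measures $\|T\|$ and $\|\bdry T\|$ under $\rho$ charge at most countably many points, so that $B(p,r)$ versus $\overline{B(p,r)}$ is immaterial off a null set, and that the exceptional radii for the slicing theorem are Lebesgue-null, are both correct and are precisely where the ``almost every $r$'' in the statement is consumed. The only caveat worth flagging is that finiteness of $\mass(T)$ and $\mass(\bdry T)$ is implicit in the lemma's premise that $T$ \emph{is} the integral current structure of $M$ (so $M$ is tacitly compact, or at least of finite volume and boundary area); you invoke this correctly without belaboring it.
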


\begin{thm}[Sormani \cite{SorAA}]\label{thm-SorArzela}
Let $\left(X_j, d_j, T_j\right)$ be a sequence of  integral current spaces such that 
\be
\left(X_j, d_j, T_j\right) \Fto \left(X_\infty ,d_\infty,T_\infty \right)
\ee
and $x_j \in X_j$ a Cauchy sequence. 
Then there is a subsequence such that for almost all $r>0$,   
\be
(B(x_j,r) , d_j, T_j \rstr  B(x_j,r) )
\ee
are integral current spaces and 
\be
(B(x_j,r) , d_j, T_j \rstr  B(x_j,r)) \Fto (B(x_\infty,r) , d_\infty, T_\infty \rstr  B(x_\infty , r) ).
\ee
\end{thm}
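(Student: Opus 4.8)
The plan is to put the whole sequence together with the limit inside one complete metric space and then estimate the flat distance of the restricted currents \emph{directly}, using Ambrosio--Kirchheim's slicing of integral currents by Lipschitz functions. For the reduction: for each $j$ the definition of the flat distance provides a complete metric space $Z_j$, isometric embeddings $\varphi_j\colon\bar X_j\to Z_j$ and $\psi_j\colon\bar X_\infty\to Z_j$, and integral currents $A_j\in\intcurr_m(Z_j)$, $B_j\in\intcurr_{m+1}(Z_j)$ with $\varphi_{j\#}T_j-\psi_{j\#}T_\infty=A_j+\partial B_j$ and $\mass(A_j)+\mass(B_j)\to 0$. Gluing the $Z_j$ along the isometric copies of $\bar X_\infty$ yields a single complete metric space $Z$ into which every $\bar X_j$ and $\bar X_\infty$ embeds isometrically; after this identification $T_j-T_\infty=A_j+\partial B_j$ holds in $Z$ with $\mass(A_j)+\mass(B_j)\to 0$, and passing to a subsequence I may assume $\mass(A_j)+\mass(B_j)\le 2^{-j}$. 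Since $\{x_j\}$ is Cauchy it converges in $Z$ to a point $x_\infty$. Set $f_j=d_Z(\cdot,x_j)$, $f_\infty=d_Z(\cdot,x_\infty)$, so $|f_j-f_\infty|\le\varepsilon_j:=d_Z(x_j,x_\infty)\to 0$ and, as the embeddings are isometric, $\{f_j<r\}\cap X_j=B(x_j,r)$ and $\{f_\infty<r\}\cap X_\infty=B(x_\infty,r)$.

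To see the balls are integral current spaces: restricting an integer rectifiable current to a Borel set preserves integer rectifiability and finite mass, and for Lipschitz $f$ the slice $\inner{S,f,r}=\partial(S\rstr\{f<r\})-(\partial S)\rstr\{f<r\}$ is an integral current of finite mass for a.e.\ $r$ (coarea); hence for an integral current $S$ the current $S\rstr\{f<r\}$ is itself integral for a.e.\ $r$. Applying this to $T_j$, $T_\infty$ and $B_j$, and noting that a countable union of null sets of $r$ is still null, I obtain a single full-measure set of $r$ for which $T_j\rstr B(x_j,r)$, $T_\infty\rstr B(x_\infty,r)$ and $B_j\rstr\{f_j<r\}$ are all integral currents; the first two then define integral current spaces on their canonical sets (denoted $B(x_j,r)$, $B(x_\infty,r)$ by the usual abuse, as in Lemma~\ref{lem-ICSball}).

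For the flat convergence I would restrict $T_j=T_\infty+A_j+\partial B_j$ to $\{f_j<r\}$ and rewrite $(\partial B_j)\rstr\{f_j<r\}=\partial(B_j\rstr\{f_j<r\})-\inner{B_j,f_j,r}$, obtaining for a.e.\ $r$ a decomposition
\be
T_j\rstr\{f_j<r\}-T_\infty\rstr\{f_\infty<r\}=U_r+\partial V_r ,
\ee
with $V_r=B_j\rstr\{f_j<r\}$ and $U_r=T_\infty\rstr(\{f_j<r\}\setminus\{f_\infty<r\})-T_\infty\rstr(\{f_\infty<r\}\setminus\{f_j<r\})+A_j\rstr\{f_j<r\}-\inner{B_j,f_j,r}$, both integral currents for a.e.\ $r$. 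Using $Z$ with its inclusion maps as an admissible common space for the two balls,
\be
d_{\mathcal F}\le d_F^Z\le \|T_\infty\|\big(\{|f_\infty-r|\le\varepsilon_j\}\big)+\mass(A_j)+\mass\big(\inner{B_j,f_j,r}\big)+\mass(B_j).
\ee
Letting $j\to\infty$ with $r$ fixed: $\mass(A_j),\mass(B_j)\le 2^{-j}\to 0$; the first term decreases to $\|T_\infty\|(\{f_\infty=r\})$, which is $0$ for all but countably many $r$ since $\|T_\infty\|$ is finite; and $\sum_j\int_0^\infty\mass\big(\inner{B_j,f_j,r}\big)\,dr\le\sum_j\mass(B_j)<\infty$ by coarea, so $\mass\big(\inner{B_j,f_j,r}\big)\to 0$ for a.e.\ $r$. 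Hence the flat distance between the two balls tends to $0$ for a.e.\ $r$, which is the claim.

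I expect the main obstacle to be the bookkeeping that makes a \emph{single} subsequence work for \emph{almost every} $r$ at once, while the ball centres $x_j$ are moving. Both issues are handled by first passing to a subsequence with \emph{summable} error masses $\mass(B_j)$ (and $\mass(A_j)$), and only afterwards invoking the coarea inequality — to kill the slice masses $\mass(\inner{B_j,f_j,r})$ for a.e.\ $r$ — and the finiteness of $\|T_\infty\|$ — to kill the mass of $T_\infty$ on the thin annuli $\{|f_\infty-r|\le\varepsilon_j\}$ around the moving spheres.
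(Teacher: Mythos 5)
The paper states this result as a citation to Sormani's Arzel\`a--Ascoli paper \cite{SorAA} and does not reprove it, so I compare your argument against the known proof in that reference. Your main tools — slicing integral currents by the distance functions $f_j=d_Z(\cdot,x_j)$, the coarea inequality to control the slice masses, and passing first to a subsequence with summable error masses $\mass(A_j)+\mass(B_j)\le 2^{-j}$ before invoking any a.e.~argument — are exactly what drives Sormani's proof, and your concrete estimates all check out: the symmetric difference $\{f_j<r\}\,\triangle\,\{f_\infty<r\}$ is contained in the annulus $\{|f_\infty-r|\le\varepsilon_j\}$, the quantity $\|T_\infty\|(\{|f_\infty-r|\le\varepsilon_j\})$ decreases to $\|T_\infty\|(\{f_\infty=r\})$ which vanishes off a countable set of $r$, and $\sum_j\int_0^\infty\mass\inner{B_j,f_j,r}\,dr\le\sum_j\mass(B_j)<\infty$ forces $\mass\inner{B_j,f_j,r}\to 0$ for a.e.~$r$. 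The decomposition $T_j\rstr\{f_j<r\}-T_\infty\rstr\{f_\infty<r\}=U_r+\partial V_r$ is correct, and $U_r,V_r$ are integral currents for a.e.~$r$ by the slicing theorem (this is a separate statement from the coarea inequality you cite in passing, but both are in Ambrosio--Kirchheim).

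The one step I would not accept as written is the gluing together with the phrase ``since $\{x_j\}$ is Cauchy it converges in $Z$.'' In your glued $Z$, the only identification between distinct $Z_j$ and $Z_k$ is along $\bar X_\infty$, so any path from $x_j\in X_j\subset Z_j$ to $x_k\in X_k\subset Z_k$ passes through $\bar X_\infty$ and hence $d_Z(x_j,x_k)\ge d_{Z_j}(x_j,\psi_j(\bar X_\infty))$. Thus $\{x_j\}$ being Cauchy in your glued $Z$ already \emph{implies} $d_{Z_j}(x_j,\psi_j(\bar X_\infty))\to 0$, which is a nontrivial extra assumption, not something granted by the words ``$x_j\in X_j$ a Cauchy sequence.'' The hypothesis of being a Cauchy sequence is defined relative to a common complete metric space $Z$ in which the flat convergence $\varphi_{j\#}T_j\to\varphi_{\#}T_\infty$ holds and $\{\varphi_j(x_j)\}$ is Cauchy — such a $Z$ exists and is part of the data of the statement. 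If you simply take that $Z$ and choose witnesses $A_j,B_j\in\intcurr(Z)$ there with $\mass(A_j)+\mass(B_j)\to 0$ (which is possible because the flat norm in $Z$ tends to $0$), you can drop the gluing entirely; the Cauchy limit $x_\infty\in Z$ is then well-defined, your slicing argument applies verbatim, and the compatibility issue disappears.
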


From Lemma \ref{lem-ICSball} and Theorem \ref{thm-fillvol} and applying
Theorem~\ref{thm-fillvolCont} to a sequence of converging balls we see that
for $x\in \mathcal{R}(X)$ we have
\begin{eqnarray}\label{Txr}
||T||(B(x,r)) &\ge& \fillvol(\bdry (B(x,r) , d, T \rstr  B(x,r)))\\
& =&\lim_{j\to \infty}
\fillvol(\bdry (B(x_j,r) , d_j, T_j \rstr  B(x_j,r))).
\end{eqnarray}
Thus to get a lower estimate of $\fillvol(\bdry (B(x,r) , d, T \rstr  B(x,r)))$ it is enough to estimate
\be\label{SW-estimate}
\fillvol(\bdry (B(x_j,r) , d_j, T_j \rstr  B(x_j,r))) \ge Cr^n \text{ for } x_j \to x. 
\ee

Sormani-Wenger proved the following filling volume estimate in \cite{SorWen1}. It holds in a more general setting and was proven using work of Gromov \cite{Gromov-filling} and Ambrosio-Kirchheim \cite{AK}. In the case of Riemannian manifolds the proof is quite similar to Greene-Petersen's proof of the existence of lower bounds on volume of balls of manifolds with a local geometric contractibility function \cite{Greene-Petersen}.  

\begin{thm}[Sormani-Wenger \cite{SorWen1}, cf. Theorem 3.19 in \cite{PorSor}]\label{thm-fillvol}
Let $(M,g)$ be a compact $n$-dimensional Riemannian manifold (with or without boundary) and $p \in M$. If there exist $r_0 > 0$ and
$k\geq 1$ such that $B(p,kr_0) \cap \set(\bdry M) = \emptyset$, and every $B(z,r) \subset B(p, r_0)$ is contractible within $B(z, kr)$ for all $r \leq 2^{-(n+6)}k^{-(n+1)}r$. Then there is $C_k >0$ such that 
\be 
\fillvol(\bdry (\overline{B(p,r)},d, T \rstr B(p,r))    ) \geq C_k r^n
\ee
for all $r \leq 2^{-(n+6)}k^{-(n+1)}r_0$.
\end{thm}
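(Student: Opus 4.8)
\medskip
\noindent\textbf{Proof proposal.} I would argue by contradiction, recasting Gromov's filling inequalities and Greene--Petersen's volume bounds in the language of integer rectifiable currents (Ambrosio--Kirchheim). Write $N_r:=(\overline{B(p,r)},d,T\rstr B(p,r))$, an integral current space for a.e.\ $r$ by Lemma~\ref{lem-ICSball}, so that $\bdry N_r$ is carried, with multiplicity one, by the metric sphere $\Sigma_r:=\{x:d(x,p)=r\}$ with the restricted metric. Suppose $\fillvol(\bdry N_r)<C_k r^n$ for some admissible $r\le r_0$, where $C_k=C_k(n,k)$ is to be fixed at the end; then some integral $n$-current $V$ in a complete metric space $Z$ has $\bdry V=\Sigma_r$ (after isometric embedding) and $\mass(V)<C_k r^n$. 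The fact that singles out $p$ is this: since $B(p,kr_0)\cap\set(\bdry M)=\emptyset$ and $r\le r_0$, the centre $p$ is an interior point of $M$, so $T$ (integration over $M$) has $\|T\|$-density $1$ at $p$, and the natural competitor $T\rstr B(p,r)$ is a filling of $\Sigma_r$ reaching distance exactly $r$ from $\Sigma_r$, realized at $p$.

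The first reduction is Gromov's unconditional inequality $\fillvol(\Sigma_r)\ge c_n\,\fillrad(\Sigma_r)^{\,n}$, where $\fillrad(\Sigma_r)$ is the filling radius of $\Sigma_r$ inside $L^\infty(\Sigma_r)$. Its proof is the usual coarea monotonicity argument: take a near-minimal $V$, pick $z_0\in\spt V$ with $\dist(z_0,\bdry V)\ge\fillrad(\Sigma_r)$, set $g(\rho):=\mass\big(V\rstr B(z_0,\rho)\big)$, and combine the slice identity $\langle V,d_{z_0},\rho\rangle=\bdry\big(V\rstr B(z_0,\rho)\big)$ (valid while $\rho<\dist(z_0,\bdry V)$), the coarea estimate $g'(\rho)\ge\mass\langle V,d_{z_0},\rho\rangle$, and the general isoperimetric inequality $\fillvol(C)\le c_n(\mass C)^{n/(n-1)}$ for integral $(n-1)$-cycles, to get $\frac{d}{d\rho}\big(g(\rho)^{1/n}\big)\ge c_n$ and hence $\mass(V)\ge g(\fillrad(\Sigma_r))\ge c_n\fillrad(\Sigma_r)^{\,n}$. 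So it suffices to prove $\fillrad(\Sigma_r)\ge c(k)\,r$; then $\mass(V)\ge\fillvol(\Sigma_r)\ge c_n\big(c(k)r\big)^n$, contradicting $\mass(V)<C_kr^n$ once $C_k$ is chosen below $c_n c(k)^n$.

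The lower bound on $\fillrad(\Sigma_r)$ is where the local contractibility enters, following Greene--Petersen. Embed $\overline{B(p,r)}$ into $L^\infty(\Sigma_r)$ by $\phi(x):=d(x,\cdot)|_{\Sigma_r}$; $\phi$ is $1$-Lipschitz, isometric on $\Sigma_r$, nondegenerate at $p$ (the contractibility forces $\Sigma_r$ to genuinely ``surround'' $p$, so the metric derivative of $\phi$ at $p$ is the Euclidean norm), and a direct computation gives $\dist_{L^\infty}(\phi(p),\Sigma_r)\ge r$. Suppose, for contradiction, that $\Sigma_r$ bounded an integral $n$-current $V'$ with $\spt V'$ in the $\varepsilon$-neighbourhood of $\Sigma_r$ for some $\varepsilon<c(k)r$. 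Then $W':=V'-\phi_\#(T\rstr B(p,r))$ is an integral $n$-cycle with $\|W'\|$-density $1$ at $\phi(p)$ (since $\phi(p)$ is at distance $>\varepsilon$ from $\Sigma_r$, hence off $\spt V'$, and $\phi$ is nondegenerate at $p$), and $\spt W'$ lies in the union $U$ of the $\varepsilon$-neighbourhood of $\Sigma_r$ and $\phi(\overline{B(p,r)})$. Now, because $M$ carries a local geometric contractibility function near $\partial B(p,r)$ which passes (with worse constants) to $\Sigma_r$, running $n{+}1$ rounds of iterated coning produces a deformation retraction of that $\varepsilon$-neighbourhood onto $\Sigma_r$ through a set of topological dimension $\le n-1$ --- it is exactly these $n{+}1$ conings, each enlarging distances by a factor $k$ together with the $O(n)$ accompanying dyadic factors, that force $\varepsilon<c(k)r$ with $c(k)=2^{-(n+6)}k^{-(n+1)}$. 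Hence $U$ is contractible and of topological dimension $\le n$, so $H_n(U)=0$ and $W'=\bdry U'$ for an integral $(n+1)$-current $U'$ supported in $U$; but an $(n+1)$-rectifiable set cannot lie inside a set of topological dimension $\le n$, so $\mass(U')=0$ and $W'=0$, contradicting $\|W'\|$-density $1$ at $\phi(p)$. Thus $\fillrad(\Sigma_r)\ge c(k)r$. (The errors from first slicing $V$ by $d_Z(\iota(p),\cdot)$ to localize its support near $M$ and from discarding its exterior part are $(n-1)$-cycles and $n$-currents of mass $\le C(n)C_kr^n$, refilled by the same isoperimetric inequality and absorbed into the final constant by taking $C_k$ a small enough fraction of $c_n c(k)^n$.)

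The step I expect to be the genuine obstacle is the last one: extracting from the bare hypothesis ``every $B(z,\rho)\subset B(p,r_0)$ is contractible within $B(z,k\rho)$'' an honest deformation retraction of a thin $L^\infty$-neighbourhood of $\Sigma_r$ onto $\Sigma_r$ \emph{through a set of controlled topological dimension}, with the threshold $c(k)=2^{-(n+6)}k^{-(n+1)}$ emerging from the $n{+}1$ iterated conings. This is precisely the analytic heart of Greene--Petersen's ``little topology, big volume'' argument, transplanted to the current setting via Ambrosio--Kirchheim; a secondary nuisance, handled by the localization described above, is that a near-minimizing competitor filling $V$ a priori lives in an abstract ambient space over which we have no geometry at all.
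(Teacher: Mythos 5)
Your proposal reconstructs exactly the route the paper attributes to Sormani--Wenger: Gromov's coarea/isoperimetric monotonicity argument passing from filling volume to filling radius, combined with a Greene--Petersen-style lower bound on the filling radius of the distance sphere extracted from the local geometric contractibility hypothesis via iterated coning in the Kuratowski embedding. The paper itself states this as a review theorem cited from \cite{SorWen1} (cf.\ \cite{PorSor}) and does not reproduce a proof, but the approach it explicitly indicates --- Gromov's filling machinery, Ambrosio--Kirchheim currents, ``quite similar to Greene--Petersen'' --- is precisely what you carry out, and you correctly identify the honest technical obstacle as converting the contractibility hypothesis into a dimension-controlled deformation retraction near $\Sigma_r$ with the threshold $2^{-(n+6)}k^{-(n+1)}$ emerging from the $n{+}1$ conings.
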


With the hypotheses of Theorem \ref{thm-SWgh=if}, Sormani-Wenger obtain $r_0$ and $k$ that only depend on $x$ for $x_j$ where $j$ is large.  They use the fact that if $x$ is a regular point of $X$ then all its tangent cones are isometric to $\R^n$ as in Cheeger-Colding \cite{ChCo-PartI} ( cf. Theorem \ref{thm-RegularSet}).  They then apply Colding's Volume Estimate \cite{Colding-survey} (cf. \ref{thm-Colding-volume}) and the GH
convergence of the balls to show that the volume of small balls contained in $B(x_j,r_0)$ satisfy inequality (\ref{eq-eqPerelman}) of Perelman's Main Lemma in \cite{Perelman-max-vol}: 

\begin{thm}[Perelman, Main Lemma and remark \cite{Perelman-max-vol}]\label{thm-Perelman}
For any $c_2 > c_1> 1$ and integer $k >0$ there is $\delta(k,c_1,c_2) > 0$ with the following property:

Let $M$ be an $n$-dimensional Riemannian manifold with $\ric(M) \geq 0$.    
Suppose that  $p \in M$ and that 
\be\label{eq-eqPerelman}
\vol(B(q,\rho)) \ge (1-\delta) \omega_n \rho^n  
\ee
for every ball $B(q,\rho) \subset B(p,c_2R)$. 
Then,
\begin{itemize} 
\item 
any continuous function $f:S^k \to B(p,R)$
can be continuously extended to a function
\be \bar f:B^{k+1} \to B(p,c_1R).
\ee
\item 
any continuous function $f:S^k \to M \setminus B(p,c_1R)$
can be continuously deformed to a function
\be 
\bar f:S^k \to  M \setminus B(p,c_2R).
\ee
\end{itemize}
\end{thm}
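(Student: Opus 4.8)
The plan is to follow Perelman's strategy. First I would rescale so that $R=1$; this costs nothing since $\ric\ge 0$ and the volume ratios $\vol(B(q,\rho))/(\omega_n\rho^n)$ are scale invariant. Then I would argue by induction on $k$, proving the extension statement and the deformation statement simultaneously, because each feeds the other: filling a $(k+1)$-simplex whose boundary is already mapped requires the $k$-dimensional extension, and pushing a $k$-sphere outward requires first pushing out its $(k-1)$-skeleton. The base case $k=0$ is essentially ``one can walk radially'': for extension, concatenate the minimizing geodesics from the two given points to $p$, which stay in $B(p,1)\subset B(p,c_1)$; for deformation, follow the outward flow of $d(p,\cdot)$ on the shell $B(p,c_2)\setminus B(p,c_1)$, which is well defined thanks to the almost rigidity below.

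The geometric engine is the almost rigidity forced by the volume pinching on $B(p,c_2)$. Combining Bishop-Gromov with the volume almost rigidity theorem of Colding and Cheeger-Colding (the near-converse of Theorem~\ref{thm-Colding-volume}) gives that every geodesic subball $B(z,s)\subset B(p,c_2)$ is $\tau(\delta)$-Gromov-Hausdorff close to the Euclidean $s$-ball, with $\tau(\delta)\to 0$ as $\delta\to 0$; and via the Abresch-Gromoll excess estimate together with the Cheeger-Colding almost splitting theorem one gets that $r(\cdot):=d(p,\cdot)$ behaves almost like the Euclidean radius, being $\tau(\delta)$-close to affine along short minimizing geodesics (after normalizing by the endpoints' radii), and that such geodesics between nearby points are almost unique and almost straight. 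These are exactly the estimates that let one import Euclidean constructions with only a controlled multiplicative error.

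For the extension of $f\colon S^k\to B(p,1)$ I would triangulate $S^k$ so finely that each simplex maps into a ball of radius $\eta\ll 1$, extend the triangulation to $B^{k+1}$, and build $\bar f$ skeleton by skeleton in ``generations'' coning toward $p$: send interior edges to minimizing geodesics and fill each simplex by coning its already-filled boundary from one vertex along minimizing geodesics, invoking the inductive hypothesis in dimensions $<k+1$ both to carry out each fill and to guarantee consistency on overlaps (here the almost uniqueness of short geodesics in almost-Euclidean balls is essential). In exact Euclidean space such a coning of a $k$-sphere in $B(0,1)$ stays in $B(0,1)$; here each generation distorts the radial coordinate by a factor $1+O(\tau(\delta)+\eta)$, so having fixed $\delta$ small one can then take $\eta$ fine enough that the cumulative distortion over the finitely many generations is $<c_1$ and the image lands in $B(p,c_1)$. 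The deformation statement is dual: push $f\colon S^k\to M\setminus B(p,c_1)$ outward along an almost-gradient flow of $r$ (or a robust discretization of it, pushing out the $(k-1)$-skeleton first via the inductive hypothesis), which is well defined on the shell because the almost rigidity excludes the critical points and ``pockets'' of $r$ that could trap the sphere, letting $r$ be increased past $c_2$ while staying above $c_1$ throughout.

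The main obstacle is the quantitative bookkeeping in the extension step: controlling how the radial distortion compounds through the dimension-by-dimension filling and the successive coning generations, and then choosing $\delta(k,c_1,c_2)$ — and afterwards the mesh $\eta$ — so that the total blow-up stays below $c_1$ while the almost uniqueness of geodesics remains good enough to keep the piecewise construction continuous. Making the almost rigidity input precise, with explicit dependence on $\delta$, so that it can feed this estimate is the technical heart of the argument. An alternative is a contradiction/compactness argument: if the lemma failed one would get, after rescaling, a sequence of almost-maximal-volume balls converging in the strong harmonic-coordinate $C^\alpha$ sense of Cheeger-Colding $\varepsilon$-regularity to a Euclidean ball, whence the evident Euclidean statement would transfer back; but that route relies on the $\varepsilon$-regularity theorem, which is itself deep, so the direct construction is the cleaner plan.
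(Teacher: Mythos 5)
The paper does not prove this statement; it is quoted (together with the accompanying remark) and attributed to Perelman \cite{Perelman-max-vol}, so there is no internal argument to compare your proposal against. As a reconstruction of Perelman's original argument, your sketch captures the broad shape --- rescale to $R=1$, run a simultaneous induction on $k$, cone toward a center point for the extension, push outward along (a discretization of) $d(p,\cdot)$ for the deformation --- but it diverges from Perelman on a substantive point: his proof is deliberately elementary, relying only on Bishop--Gromov comparison and direct comparison-angle and excess estimates to control almost-straight geodesics, and does not invoke Colding's volume-rigidity theorem or the Cheeger--Colding almost-splitting machinery, which appeared contemporaneously and are, if anything, downstream of the circle of ideas this lemma lives in. Your alternative compactness route via $\varepsilon$-regularity has the same issue more sharply. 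Beyond the question of which tools to cite, the genuine gap in your direct construction is exactly what you flag yourself: you have not shown that the radial distortion compounded over the generations of coning stays below $c_1$, nor that approximate uniqueness of short minimizing geodesics is strong enough to glue the skeleton-by-skeleton fills into a continuous map; that quantitative control is the entire content of Perelman's Main Lemma and cannot be relegated to bookkeeping.
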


This theorem allows them to obtain the contractibility of balls so that they can apply Theorem~\ref{thm-fillvol} to obtain (\ref{SW-estimate}) which implies (\ref{Txr}).  Thus every $x\in \mathcal{R}(X)$ is in the SWIF limit $Y=\set(T)$.

To prove that the non regular points of $X$ are contained in the SWIF limit, Sormani-Wenger use the fact that the singular set of the GH limit has zero measure \cite{ChCo-PartI} (cf. Theorem \ref{thm-RegularSet}) and 
Ambrosio-Kircheim's characterization of the mass measure \cite{AK} (cf. Lemma \ref{lemma-weight}). 

%%%%%%%%%%%%%%%%%%%%%%%%%%%%%%%%%%%%%%%%%%%%%%

%%%%%%%%%%%%%%%%%%%%%%%%%%%%%%%%%%%%%%%%%%%%%%%%%

%%%%%%%%%%%%%%%%%%%%%%%%%%%%%%%%%%%%%%%%%%%%%%%%%%

%\newpage
\section{Convergence of Metric Spaces with Boundary}\label{sec-ConvergenceMet}

In this section we prove our new GH compactness theorems for sequences of metric spaces $(X_j,d_j)$:  Theorem \ref{thm-GHconvergence1}, Theorem \ref{thm-GHCollapsing} and Theorem \ref{thm-generalizeConv1}. 
Theorem \ref{thm-GHconvergence1} is applied in all our GH convergence theorems, except Theorem \ref{thm-GHCollapsing}, stated in this paper. The theorem relies on the GH convergence of inner regions $(X^\delta_j,d_j)$ and the boundaries, $(\bdry X_j,d_j)$.  Theorem \ref{thm-GHCollapsing} deals with the collapsed case when the GH limit of $(X_j,d_j)$ agrees with the GH limit of $(\bdry X_j,d_j)$. If the Hausdorff dimension of the GH limit drops then the GH limit cannot agree with the SWIF limit. Under the conditions of Theorem \ref{thm-generalizeConv1} we prove that GH and SWIF limits agree. We provide examples showing the necessity of our hypotheses. 

\subsection{GH convergence: Theorem~\ref{thm-GHconvergence1}}
 
 We first prove the following useful lemma
and then
prove Theorem~\ref{thm-GHconvergence1}.   Recall that if $(X,d)$ is a metric space with non empty boundary, where the boundary is defined as
$\bdry X = \bar X \setminus X$, then $X^\delta= \{ x \in X \,|\, d(x,\bdry X) > \delta \}$.

\begin{lem}\label{lem-cover2cov}
Let $(X,d)$ be a precompact metric space with boundary $\bdry X$ defined as in (\ref{defn-bdry}). If 
$\{B(x_k,\delta)\}_{k=1}^N$ is a cover of $(\bdry X,d)$, then $\{B(x_k,2\delta)\}_{k=1}^N$ is a cover of $( \bar X\setminus X^\delta,d)$.
\end{lem}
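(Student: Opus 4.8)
The plan is to show that any point $y \in \bar X \setminus X^\delta$ lies within distance $2\delta$ of one of the centers $x_k$. The key observation is that if $y \notin X^\delta$, then $\dist(y, \bdry X) \le \delta$ (interpreting this in the completion $\bar X$). Indeed, for $y \in X$ the failure of $d(y,\bdry X) > \delta$ gives exactly $d(y,\bdry X) \le \delta$, and for $y \in \bdry X$ we trivially have $\dist(y,\bdry X) = 0 \le \delta$. In either case $y$ is within $\delta$ of the boundary.

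First I would fix $y \in \bar X \setminus X^\delta$ and choose a point $z \in \bdry X$ with $d(y,z) < \delta + \eta$ for an arbitrarily small $\eta > 0$ (using the infimum definition of distance to a set); actually, to keep the argument clean I would argue that $\dist(y, \bdry X) \le \delta$ and then pass to the closed neighborhood — since $\bar X$ is complete and $\bdry X$ is closed in $\bar X$, one can in fact realize points at distance $\le \delta$, but to be safe it suffices to take $z \in \bdry X$ with $d(y,z) \le \delta + \eta$. Then, since $\{B(x_k,\delta)\}_{k=1}^N$ covers $\bdry X$, there is some index $k$ with $z \in B(x_k,\delta)$, i.e. $d(z,x_k) < \delta$. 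By the triangle inequality,
\be
d(y,x_k) \le d(y,z) + d(z,x_k) < (\delta + \eta) + \delta = 2\delta + \eta.
\ee
Letting $\eta \to 0$ (or rather, noting that since there are only finitely many centers $x_k$, one of them works for a sequence $\eta \to 0$, hence $d(y,x_k) \le 2\delta$ for that $k$; if one wants strict inequality one takes $\eta$ small enough that $2\delta + \eta$ still beats the actual gap, or simply enlarges the radius argument slightly), we get $y \in B(x_k, 2\delta)$. Hence $\{B(x_k,2\delta)\}_{k=1}^N$ covers $\bar X \setminus X^\delta$.

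This is essentially a one-line triangle-inequality argument; the only mild subtlety — and the step I would be most careful about — is the handling of the infimum in $\dist(y,\bdry X)$ and whether balls are open or closed, since strict versus non-strict inequalities must be tracked so that the conclusion "$y \in B(x_k,2\delta)$" holds with whatever ball convention the paper uses. Given that the centers form a finite set, this is harmless: one picks approximating points $z_m \in \bdry X$ with $d(y,z_m) \to \dist(y,\bdry X) \le \delta$, passes to a subsequence so that all $z_m$ lie in a single $B(x_{k_0},\delta)$, and concludes $d(y,x_{k_0}) \le \delta + \dist(y,\bdry X) \le 2\delta$, which is strictly less than any radius exceeding $2\delta$; if strictness at radius exactly $2\delta$ is needed one instead notes $d(z_m,x_{k_0}) < \delta$ is strict and $d(y,z_m)$ can be taken $< \delta$ once $\dist(y,\bdry X) < \delta$, handling the boundary case $\dist(y,\bdry X)=0$ separately and trivially.
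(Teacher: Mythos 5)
Your proof is correct and takes essentially the same route as the paper's: bound $\dist(y,\bdry X)\le\delta$, locate a nearby boundary point, apply the given cover, and close with the triangle inequality. The only difference is technical: the paper uses precompactness to pick a \emph{minimizing} point $x'\in\overline{\bdry X}$ with $d(x,x')=\dist(x,\bdry X)\le\delta$ and then (somewhat loosely) treats the given cover of $\bdry X$ as a cover of $\overline{\bdry X}$, obtaining $d(x',x_k)<\delta$ and hence a strict $d(x,x_k)<2\delta$, whereas you work with an $\eta$-approximant in $\bdry X$ itself and therefore only reach $d(y,x_k)\le 2\delta$ in the limit. You are right to flag this open-ball strictness subtlety; it is benign in the paper's applications, where $\bdry X$ is compact so $\overline{\bdry X}=\bdry X$ and the minimizing point is genuinely covered by one of the open $\delta$-balls.
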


\begin{proof}
We have to show that for all $x \in \bar X \setminus X^\delta$ there is $k$ such that $x \in B(x_k,2\delta)$.
Let $x \in \bar X \setminus X^\delta$, then $d(x, \bdry X) \leq \delta$. Since $\bdry X$ is precompact there is $x' \in \overline{\bdry X}$ 
such that 
\be
d(x,x')=d(x, \bdry X) \leq \delta.
\ee
If $\{B(x_k,\delta)\}$ is a cover of $(\bdry X,d)$, then it is a cover of  $(\overline{\bdry X},d)$ and $x' \in \bdry X$ then there is $k$ such that 
\be
d(x',x_k) < \delta.
\ee
 Hence, 
 \be
 d(x,x_k)< 2\delta.
 \ee
This proves that $x \in B(x_k,2\delta)$. The result follows. 
\end{proof}

We now apply this lemma to prove our GH convergence theorem for metric spaces.

\begin{proof}[Proof of Theorem \ref{thm-GHconvergence1}]

In order to prove that $(\bar X_j,d_j)$ converges in  GH sense
we will construct a function 
\be 
N:(0,\infty) \to \N
\ee
for $\{(\bar X_j,d_j)\}$ as in Definition~\ref{defn-equibounded} and then apply Gromov's Compactness Theorem (cf. Theorem~\ref{thm-GromovCompactness}).

Since $\overline{X^{\delta_i}_j}$ and $\overline{\bdry X_j}$ converge in GH sense, by the converse of Gromov Compactness Theorem, cf. Theorem \ref{thm-ConverseGromov}, there exist functions $N(\,\cdot\,,\{\overline{X_j^{\delta_i}}\})$  and  $N( \,\cdot\,,\{\overline{\bdry X_j}\})$, respectively,  that uniformly bound the number of balls needed to cover each element of the sequences. Using these functions we first define $N:\{2\delta_i\} \to \N$. A bound on the number of $2\delta_i$-balls needed to cover $\bar{X_j}$ can be obtained 
by adding the number of  $2\delta_i$-balls needed to cover $\overline{X^{\delta_i}_j}$
to the number of $2\delta_i$-balls needed to cover $\bar X_j \setminus X_j^{\delta_i}$.    With the notation of Definition \ref{defn-equibounded} and applyng Lemma  \ref{lem-cover2cov}, define
\be 
N(2\delta_i) =  N(\delta_i,\left\{
\overline{X_j^{\delta_i}}\right\})  + N(\delta_i,\big\{\overline{\bdry X_j}\big\}).
\ee

The domain of $N$ is extended to $(0,\infty)$ by defining 
\be
N(\varepsilon)=N(2\delta_i)
\textrm{ where }2\delta_{i+1} \leq \varepsilon < 2\delta_i
\ee
and $N(\varepsilon)=N(2\delta_1)$ for $\varepsilon > 2\delta_1$.

Since $X_j$ is a length metric space and can be covered with $N(\delta_1)$ balls with radius $\delta_1$ then 
\be
\diam(\bar{X_j})\leq 2\delta_1N(\delta_1).
\ee
 The result follows from Gromov's Compactness Theorem (cf. Theorem~\ref{thm-GromovCompactness}). 
\end{proof}

We now present an example demonstrating that the conclusion of Theorem~\ref{thm-GHconvergence1} does not hold if the sequence of boundaries does not converge. 

\begin{figure}[h] %  figure placement: here, top, bottom, or page
 \centering 
\includegraphics[width=3.5in]{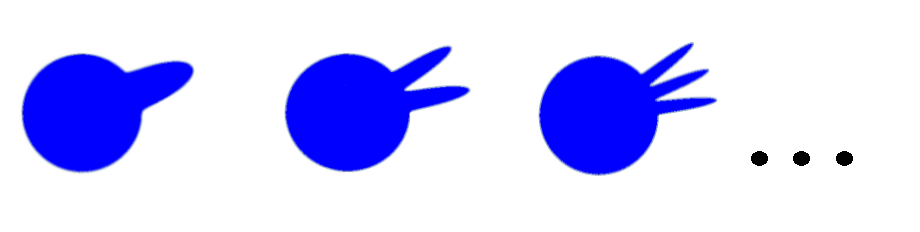} 
\caption{$\bar X_1$, $\bar X_2$, $\bar X_3$...}
\label{fig1}
\end{figure}

\begin{ex}\label{ex-multipleSplines}
Let $X_j$ be contained in $\R^n$, $n\ge 2$, endowed with the length metric that comes from the standard metric defined on $\R^n$. See Figure~\ref{fig1} above. Each $X_j$ consists of an open ball of radius $r$ with $j$ increasingly thin splines of constant length, $L$ and width $w_j \to 0$.  

Observe that each $X_j$ is precompact and that its boundary, $\bdry X_j$, is compact. For each $\delta >0$, there is  $N$ such that $X_j^\delta$ does not contain any spline for $j \geq N$.  Actually, the sequence $(\overline{X_j^\delta},d_j)$ converges in GH sense to a closed ball,
\be
(\overline{X_j^\delta},d_j) \,\,\GHto\,\, 
\overline{B(0, r-\delta)}.
\ee

Due to the increasing number of splines of constant length, the sequence $\{(\bdry X_j,d_j)\}_{j=1}^{\infty}$ is not equibounded. Thus, by the converse of Gromov Compactness Theorem, (cf. Theorem \ref{thm-ConverseGromov}),  $\{(\bdry X_j,d_j)\}_{j=1}^{\infty}$  does not have any GH convergent subsequences. 
Note that $\{(X_j,d_j)\}_{j=1}^{\infty}$ does not have GH convergent subsequences for the same reason. 
\end{ex}

The next example is modeled after the pictures depicted in Frank Morgan's book \cite{Morgan-text}.  It shows that in  Theorem~\ref{thm-GHconvergence1} the GH convergence of sequences of $\delta_i$-inner regions is necessary.  

\begin{figure}[h] %  figure placement: here, top, bottom, or page
 \centering
\includegraphics[width=3.5in]{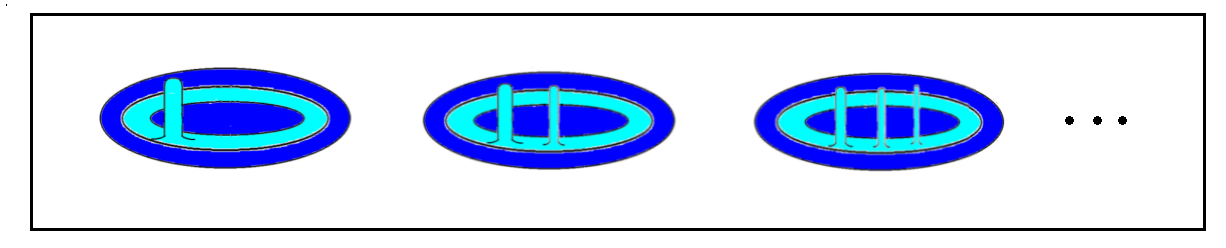} 
\caption{$X_1$, $X_2$, $X_3$...}
\label{fig2}
\end{figure}

\begin{ex}\label{ex-multipleSplines2}
Let $0<c< C <1$ be constant numbers. Consider the sequence $(X_j,d_j)$ of 
precompact surfaces in 3-dimensional Euclidean space, $\mathbb{E}^3$
as depicted in Figure~\ref{fig2} above.  More precisely, 
\be
X_j= \{ (x,y,0) \in \R^3| x^2 + y^2 \leq c\} \cup A_j \cup \{ (x,y,0) \in \R^3| C \leq x^2 + y^2 < 1\},
\ee
where $A_j$ (depicted in light blue in Figure~\ref{fig2}) has $j$ increasingly thin splines of constant height located in such a way that for all $(x,y,0) \in \bdry X_j$: 
\be \label{eq-metricSplines2}
d_{j}((x,y,0), (0,0,0)) \leq 1-c + \alpha_j,  
\ee
where $\alpha_j \to 0$ and the $d_j$'s are length metrics induced by the Euclidean metric $d_{\mathbb{E}^3}$. 

By the condition on the metrics, Equation (\ref{eq-metricSplines2}), it follows that 
\be 
(\bdry X_j,d_j) \GHto (\{ (x,y,0) \in \R^3| x^2 + y^2 = 1\},d_{\mathbb{E}^3}). 
\ee 
For $\delta \in [0, 1-c)$, the sequence $\{(\overline{X_j^\delta},d_j)\}_{j=1}^\infty$ is not equibounded due to the splines in $A_j$. Then, by the converse of Gromov's compactness theorem (cf. Theorem \ref{thm-ConverseGromov}),  $\{(\overline{X_j^\delta},d_j)\}_{j=1}^\infty$ does not have any GH convergent subsequence, nor the sequence $\{\bar X_j\}$. 
\end{ex}

\subsection{Collapsing to the Boundary or Not}

In this subsection we prove the following collapsing theorem, Theorem \ref{thm-GHCollapsing}. Then we describe a sequence of cylinders that collapses to a segment, Example \ref{ex-collapsingCylinders}.

\begin{thm}\label{thm-GHCollapsing}
Let $(X_j,d_j)$ be a sequence of precompact length metric spaces with compact boundary.  Suppose that there is a compact metric space 
$(X_\bdry, d_\bdry)$ such that
\be 
(\bdry X_j, d_j) \GHto (X_\bdry, d_\bdry).
\ee 
Then either: 
\begin{enumerate}
\item there is $\delta >0$ such that $X_j^\delta \neq \emptyset$ for infinitely many j or 
\item 
$(\bar X_j,d_j) \GHto (X_\bdry,d_\bdry)$.
\end{enumerate}
\end{thm}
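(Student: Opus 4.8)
The plan is to establish the dichotomy directly: assume alternative (1) fails and deduce alternative (2). The negation of (1) reads: for every $\delta>0$ there is an index $N(\delta)$ with $X_j^\delta=\emptyset$ for all $j\ge N(\delta)$. (Note that if $\bdry X_j=\emptyset$ for infinitely many $j$, then $X_j^\delta=X_j\neq\emptyset$ for those $j$ and alternative (1) already holds, so when (1) fails we may assume $\bdry X_j\neq\emptyset$ for $j$ large.)

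The first step is to observe that $X_j^\delta=\emptyset$ forces $\bar X_j$ to sit inside a $\delta$-neighborhood of its boundary: by definition of $X_j^\delta$, every $x\in X_j$ satisfies $d_j(x,\bdry X_j)\le\delta$, and every remaining point of $\bar X_j$ lies in $\bdry X_j$ itself; hence $\bar X_j\subset T_{\delta'}(\bdry X_j)$ for every $\delta'>\delta$. Since trivially $\bdry X_j\subset\bar X_j$, taking $Z=\bar X_j$ with the identity self-embedding of $\bar X_j$ and the inclusion $(\bdry X_j,d_j)\hookrightarrow\bar X_j$ (which is distance preserving precisely because the metric on $\bdry X_j$ is the restriction of $d_j$) yields $d_{GH}(\bar X_j,\bdry X_j)\le d_H^{\bar X_j}(\bar X_j,\bdry X_j)\le\delta$.

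The second step combines this with the hypothesis $(\bdry X_j,d_j)\GHto(X_\bdry,d_\bdry)$. Because $X_j$ is precompact, $\bar X_j$ is compact, so all the spaces involved are compact and $d_{GH}$ satisfies the triangle inequality. Thus for $j\ge N(\delta)$ we get $d_{GH}(\bar X_j,X_\bdry)\le\delta+d_{GH}(\bdry X_j,X_\bdry)$. Applying this with $\delta=1/m$ for each $m$ and letting first $j\to\infty$ and then $m\to\infty$ gives $d_{GH}(\bar X_j,X_\bdry)\to 0$, which is exactly alternative (2), $(\bar X_j,d_j)\GHto(X_\bdry,d_\bdry)$.

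There is no substantial obstacle here; the argument is short and elementary. The only points requiring care are the logical bookkeeping in the negation of (1) (finitely many indices versus eventually) and the choice of the ambient space $Z=\bar X_j$ together with the identification of the metric on $\bdry X_j$ as the restricted metric, which is what makes the inclusion an isometric embedding and hence legitimate in Definition~\ref{defn-GH}. The degenerate cases ($\bdry X_j$ empty, or $X_j$ empty) are handled by the parenthetical remark above.
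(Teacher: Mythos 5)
Your argument is correct, and it takes a genuinely different route from the paper's proof. The paper negates~(1), invokes Lemma~\ref{lem-cover2cov} to transfer $\varepsilon$-covers of $\bdry X_j$ to $2\varepsilon$-covers of $\bar X_j$ (using that $\bar X_j = \bar X_j \setminus X_j^{\delta}$ once $X_j^{\delta}=\emptyset$), deduces a uniform diameter bound from the length-space property, and then appeals to Gromov's Compactness Theorem (Theorem~\ref{thm-GromovCompactness}). That route only produces a GH-convergent \emph{subsequence} to \emph{some} compact limit; it does not by itself identify the limit as $X_\bdry$, nor does it upgrade subsequential convergence to convergence of the full sequence, so as written the paper's proof establishes less than the theorem asserts. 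Your approach is both more direct and more complete: the observation that $X_j^{\delta}=\emptyset$ forces $d_H^{\bar X_j}(\bar X_j,\bdry X_j)\le\delta$, hence $d_{GH}(\bar X_j,\bdry X_j)\le\delta$ via the tautological embeddings into $Z=\bar X_j$, combined with the triangle inequality for $d_{GH}$ and the hypothesis $d_{GH}(\bdry X_j,X_\bdry)\to 0$, shows directly that $d_{GH}(\bar X_j,X_\bdry)\to 0$, which is precisely alternative~(2). The two proofs share the common core that, under the negation of~(1), $\bdry X_j$ eventually becomes $\delta$-dense in $\bar X_j$; the paper exploits this through covering numbers, you exploit it through the Hausdorff distance. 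Your bookkeeping of the negation of~(1) and the degenerate cases ($\bdry X_j=\emptyset$ or $X_j=\emptyset$) is accurate. If you wanted to align more closely with the paper's toolkit you could cite Lemma~\ref{lem-cover2cov} for the $\delta$-density observation, but your self-contained derivation via $T_{\delta'}(\bdry X_j)\supset\bar X_j$ is equally valid and, in fact, shorter.
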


\begin{rmrk}
When the sequence of boundaries converge and (1) in Theorem \ref{thm-GHCollapsing} is satisfied we cannot conclude anything. Example \ref{ex-multipleSplines2} shows a sequence that satisfies these two conditions but $(X_j,d_j)$ does not have a GH convergent subsequence. Meanwhile, the sequence from Example \ref{ex-oneSpline} satisfies both conditions and GH converges. 
\end{rmrk}

In the next example we describe a sequence of length metric spaces that illustrates the case in which the GH limit of $X_j$ equals the 
GH limit of $\bdry X_j$.

\begin{ex}\label{ex-collapsingCylinders}
Let $X_j$ be a sequence of increasingly thin cylinders in $\R^n$, 
\be 
X_j=\{(x_1,...,x_n) \in \R^n \,| \,x_1^2+\cdots+ x_{n-1}^2 < r_j,\,\,-1 < x_n < 1\},
\ee
$r_j=1/j$, with the restricted standard metric of $\R^n$.
With this metric each $X_j$ is a precompact length metric space and it is clear that each $\bdry X_j$ is non empty and  precompact.
Let 
\be 
X_\infty=\{0\}\times\{0\}\times \cdots \times \{0\}\times [-1,1] \subset \mathbb R^n
\ee

Then,  
\be
d_H^{\R^n}(\bar X_j,X_\infty)< 2/j. 
\ee
Also, 
\be
d_H^{\R^n}(\bdry X_j,X_\infty)< 2/j. 
\ee
Thus, 
\be
\bdry X_j \GHto X_\infty \text{   and  } \bar X_j \GHto X_\infty. 
\ee
\end{ex}

\begin{proof}[Proof of Theorem \ref{thm-GHCollapsing}]
Suppose that there is no $\delta >0$ such that $X_j^\delta \neq \emptyset$ for infinitely many $j$. Fix $\delta>0$, then $X_j^\delta = \emptyset$ for finitely many $j$. Thus, for except a finite number of $j$'s,
$\bar X_j=\bar X_j \setminus X_j^{\delta}$. Hence,  we define a function $N$ that counts the number of $\varepsilon$-balls needed to cover $\bar X_j$ by
\be 
N(2\delta) := N(\delta,\{\bdry X_j\}),
\ee 
where we are using the notation of Definition \ref{defn-equibounded} and applying Lemma  \ref{lem-cover2cov} 

Since each $\bar X_j$ is a length space and can be covered by $N(\varepsilon_0)$ $\varepsilon_0$-balls we get $\diam(\bar{X_j})\leq 2\varepsilon_0 N(\varepsilon_0)$. The result follows from Gromov's compactness theorem, Theorem \ref{thm-GromovCompactness}. 
\end{proof}

\subsection{GH=SWIF: Theorem \ref{thm-generalizeConv1}}

In this subsection we prove Theorem \ref{thm-generalizeConv1}. Theorem \ref{thm-generalizeConv1} assures that the GH and SWIF limits agree for sequences of integral currents that converge in GH and SWIF sense that satisfy conditions (\ref{inside-inside}) and  (\ref{eq-completionY}), namely:
\be
X(\delta_i) \subset Y  \,\, \text{and}\,\, X \setminus X_\bdry \subset Y.
\ee

 In Example \ref{ex-oneSplineInside} we present a sequence that has GH and SWIF limits, that satisfies (\ref{eq-completionY}) but does not satisfy (\ref{inside-inside}). Then, in Example \ref{ex-oneSpline} we describe a sequence that has GH and SWIF limits, that satisfies  (\ref{inside-inside}) but does not satisfy (\ref{eq-completionY}). In both cases, the conclusion of the theorem does not hold.
At the end of the subsection we prove Lemma \ref{lem-IntPtX} that characterizes the points in $X \setminus X_\bdry$. 
We use this lemma to prove Theorem \ref{thm-GH=IF}. 
%With this lemma Theorem \ref{thm-generalizeConv1} can be rewritten as Corollary \ref{cor-generalizeConv1}. 

\begin{proof}[Proof of Theorem \ref{thm-generalizeConv1}]
By hypothesis we know that the SWIF limit is contained in the GH limit, $Y \subset X$. We only have to show that $X \subset \bar Y$.  

Let $x \in X$. Since the sequence $\{\bar X_j\}$ converges in GH sense to $X$, there is a sequence $x_j \in \bar X_j$ that converges to $x$. If there is $i>0$ such that $x_j \in X_j^{\delta_i}$ for infinite many $j$, then by the GH convergence of the sequence $\{X_j^{\delta_i}\}$,  $x \in X(\delta_i)$. Thus, by hypothesis (\ref{inside-inside}), $x \in Y$. 

Otherwise, for each $i$ there is $j_k$
such that $x_{j_k} \notin X_{j_k}^{\delta_i}$. Thus, $d(x_{j_k},\bdry X_{j_k}) \leq \delta_i$. Since each boundary, $\bdry X_j$, is precompact we can choose $y_{j_k} \in \overline{\partial X_{j_k}}$ such that 
\be 
d_{j_k}(x_{j_k},y_{j_k})=d(x_{j_k},\overline{\partial X_{j_k}}) \leq \delta_i.
\ee 
Then by  the triangle inequality, 
\be 
d_{j_k}(y_{j_k},x) \to 0\,\, { as }\,\, k \to \infty.
\ee
Thus, $x \in X_\bdry$. From hypothesis (\ref{eq-completionY}) follows that $x \in \bar Y$. Hence, $X \subset \bar{Y}$. This together with $ Y \subset X$ implies that $X=\bar Y$.
\end{proof}

\begin{rmrk}
From Example \ref{ex-multipleSplines2} it follows that in Theorem \ref{thm-GHconvergence1} the convergence of the sequences of inner regions is necessary.
\end{rmrk}

In the next example we describe a sequence that satisfies all the conditions of Theorem \ref{thm-GHconvergence1}, except condition (\ref{inside-inside}). The conclusion of the theorem does not hold. 

\begin{figure}[h] %  figure placement: here, top, bottom, or page
   \centering
   \includegraphics[width=3.5in]{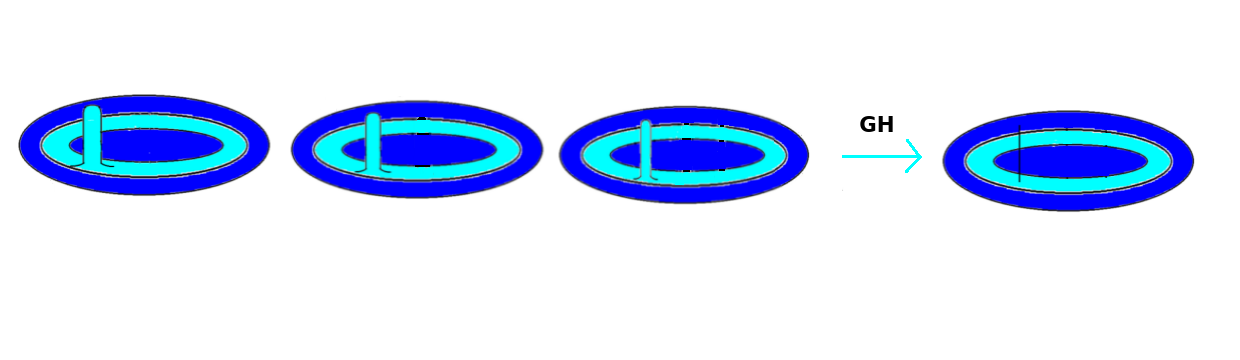} 
 \caption{$X_1$, $X_2$, $X_3$...}
\end{figure}

\begin{ex}\label{ex-oneSplineInside}
Just as in Example \ref{ex-multipleSplines2}, let $0<c< C <1$ be constant numbers. Consider the sequence $(X_j,d_j)$ of 2-dimensional precompact length metric spaces in 3-dimensional Euclidean space as depicted in the figure above given by: 
\be
X_j= \{ (x,y,0) \in \R^3| x^2 + y^2 \leq c\} \cup A_j \cup \{ (x,y,0) \in \R^3| C \leq x^2 + y^2 \leq 1\},
\ee
where $A_j$ has only one increasingly thin spline of constant height located in such a way that  
\be 
d_j((x,y,0), (0,0,0)) \leq 1-c + \alpha_j  
\ee
for all $(x,y,0) \in \bdry X_j$ and $\alpha_j \to 0$. 

The spline in $A_j$ converges to a segment. Thus, the GH limit of $(\bar X_j,d_j)$ is a disc with the segment attached to it 
and its SWIF limit, $Y$, equals the disc. 

The sequences $(\overline{X_j^\delta},d_j)$ GH converge for all $\delta <1$: 
\be 
(\overline{X_j^\delta},d_j) \GHto (X(\delta), d_\delta).
\ee 
But $(X(\delta), d_\delta) \nsubset \bar Y$ for $\delta \in [0,1-c)$ since each $X(\delta)$ contains a segment that $Y$ does not. Hence, this sequence does not satisfy condition (\ref{inside-inside}). It satisfies $X_\bdry \subset Y$ since 
from Equation (\ref{eq-metricSplines2}) follows that 
\be 
(\bdry X_j,d_j) \GHto (\{ (x,y,0) \in \R^3| x^2 + y^2 = 1\},d_{\mathbb{E}^3}). 
\ee 
\end{ex}

%Now we describe a sequence of metric spaces that satisfy all the conditions of Theorem \ref{thm-GHconvergence1}, except that %the GH limit of the boundaries is not contained in the SWIF limit of the spaces, condition \ref{eq-completionY}.

%\begin{ex}\label{ex-completionY}
%Let $M_j= \overline{B(0,1)}$ be a sequence of closed balls in Euclidean space $\R^2$ with Riemannian metric 
%\be 
%f_j(\theta)^2dr^2 + d\theta^2
%\ee
%where
%$f_j: [0,2\pi] \to [1,R]$ is a smooth function such that $f_j(\theta)=1$ for $r > \pi/2$
%and $f_j|_{[0,\pi/2)}$ is defined in such a way that the GH limit of $(X_j,d_j)$ has a cusp and its SWIF limit is missing the tip of the %cusp. See Figure 8, Example A9 in \cite{SorWen2}.
%\end{ex}

In the next example we show a sequence that satisfies all the conditions of 
Theorem \ref{thm-generalizeConv1}, except $X_{\partial} \subset Y$, for which the conclusion of the theorem does not hold.  

\begin{figure}[h] %  figure placement: here, top, bottom, or page
 \centering
\includegraphics[width=3.5in]{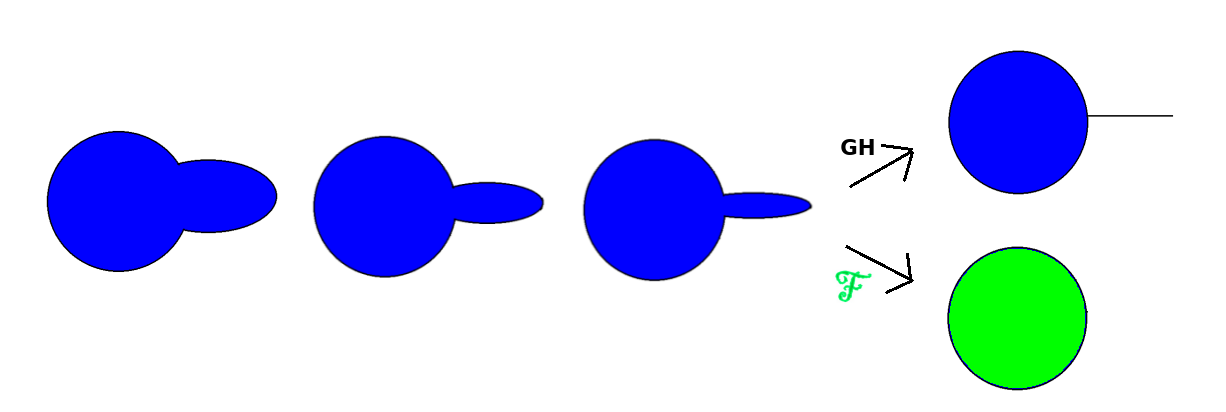} 
\caption{$X_1$, $X_2$, $X_3$...}
\label{fig-OneSpline}
\end{figure}

\begin{ex}\label{ex-oneSpline}   Let $M_j$ be a sequence of $n$-dimensional manifolds diffeomorphic to a closed ball in $\mathbb R^n$, $n \geq 2$,
consisting of a ball with a single increasingly thin spline as depicted in Figure \ref{fig-OneSpline} for $n=2$. Let $X_j = M_j \setminus \bdry X_j$.  Then, each $X_j$ is a precompact length metric space with compact boundary.  

The GH limit, $X$, of the sequence is a closed ball of radius $r$, $\overline{B(0,r)}$, with a segment attached.  The SWIF limit of the manifolds is  $Y=\overline{B(0,r)}$. 

For each $\delta >0$, there is  $N$ such that $X_j^\delta$ does not contain any spline for $j \geq N$.  Actually, the sequence $(\overline{X_j^\delta},d_j)$ converges in GH sense to a closed ball,
\be
(\overline{X_j^\delta},d_j) \,\,\GHto\,\, 
\overline{B(0, r-\delta)}.
\ee
Hence, $\overline{B(0, r-\delta)} \subset Y$. The GH limit of the boundaries is a sphere with a segment attached.  Thus, $X_\partial \nsubset Y$. 
\end{ex}

Now we characterize the points in $X \setminus X_\bdry$. This characterization will help us to prove GH=SWIF 
in Section \ref{sec-ConvergenceMan}.

\begin{lem}\label{lem-IntPtX}
Let $\{(X_j, d_j)\}_{j=1}^\infty$ be a sequence of precompact metric spaces with compact boundary
that converges in GH sense to a compact metric space $(X,d)$:
\be
(\bar X_j, d_j) \GHto (X, d).
\ee
Denote by $X_\bdry$ the GH limit of the boundaries:
\be 
(\bdry X_j, d_j) \GHto (X_\bdry, d).
\ee
Let $x \in X$. Then $x \in X \setminus X_\bdry$ if and only if there is $\delta >0$ and a sequence $x_j \in X_j^\delta$ such that 
\be
\lim_{j \to \infty} x_j=x. 
\ee
\end{lem}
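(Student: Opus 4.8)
The plan is to prove the two implications separately after placing everything in one ambient space. By Gromov's Embedding Theorem (Theorem~\ref{thm-GromovEmbedding}), together with Blaschke's Theorem (Theorem~\ref{thm-Blaschke}) applied to the closed subsets $\bdry X_j$, I would pass to a subsequence and fix a common compact metric space $(Z,d)$ with isometric embeddings $\varphi_j\colon \bar X_j\to Z$ and $\varphi\colon X\to Z$ so that $\varphi_j(\bar X_j)\Hto\varphi(X)$ and $\varphi_j(\bdry X_j)\Hto\varphi(X_\bdry)$; in particular $\varphi(X_\bdry)\subset\varphi(X)$ (a Hausdorff limit of subsets), and ``$x_j\to x$'' means $\varphi_j(x_j)\to\varphi(x)$. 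Throughout I would use that $\bar X_j$, $\bdry X_j$ and $X_\bdry$ are compact, so that the distance from a point of $Z$ to any of their images is attained.

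For the implication ``$\Leftarrow$'', assume $x\in X$, $\delta>0$ and $x_j\in X_j^\delta$ with $x_j\to x$, and suppose toward a contradiction that $x\in X_\bdry$. Since $\varphi(x)\in\varphi(X_\bdry)$ and $\varphi_j(\bdry X_j)\Hto\varphi(X_\bdry)$, one can choose $w_j\in\bdry X_j$ with $d(\varphi_j(w_j),\varphi(x))\to 0$. Then
\[
d_j(x_j,w_j)=d(\varphi_j(x_j),\varphi_j(w_j))\le d(\varphi_j(x_j),\varphi(x))+d(\varphi(x),\varphi_j(w_j))\longrightarrow 0,
\]
which contradicts $d_j(x_j,w_j)\ge d_j(x_j,\bdry X_j)>\delta$. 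Hence $x\in X\setminus X_\bdry$.

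For the implication ``$\Rightarrow$'', assume $x\in X\setminus X_\bdry$. As $X_\bdry$ is compact and $x\notin X_\bdry$, we may set $\delta:=\tfrac13\,d(\varphi(x),\varphi(X_\bdry))>0$. Using $\varphi_j(\bar X_j)\Hto\varphi(X)$, pick $z_j\in\bar X_j$ with $z_j\to x$. The claim is that $z_j\in X_j^\delta$ for all large $j$. If not, after passing to a subsequence we have $d_j(z_j,\bdry X_j)\le\delta$; choose $w_j\in\bdry X_j$ realizing this distance, then (using $\varphi_j(\bdry X_j)\Hto\varphi(X_\bdry)$) a point $w_j'\in X_\bdry$ with $d(\varphi_j(w_j),\varphi(w_j'))\to 0$, and by compactness of $X_\bdry$ pass to a further subsequence along which $w_j'\to w'\in X_\bdry$. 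The triangle inequality in $Z$, together with $d_j(z_j,w_j)\le\delta$, then yields $d(\varphi(x),\varphi(w'))\le\delta$, contradicting $d(\varphi(x),\varphi(X_\bdry))=3\delta$. Thus $z_j\in X_j^\delta$ for $j\ge N$; putting $x_j:=z_j$ for $j\ge N$ gives the required sequence, since only the tail matters for the limit (and when $X_j^\delta\ne\emptyset$ for the finitely many smaller $j$ one may complete the sequence arbitrarily).

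The computations are just triangle-inequality bookkeeping inside $Z$, so I do not expect any step to be genuinely hard. The only point needing care is the set-up: ensuring that $X_\bdry$ really sits inside $X$ and that both Hausdorff convergences hold simultaneously for the same embeddings $\varphi_j$ — this is exactly what makes the two directions mirror images of one another — and invoking compactness of the boundaries so that the approximating points $w_j$ and $w'$ actually exist.
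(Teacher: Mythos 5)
Your proof is correct and follows essentially the same route as the paper: both work in a common compact ambient space $Z$ (Gromov's Embedding Theorem plus Blaschke) and rely on the triangle inequality together with compactness of $\bdry X_j$ to produce nearest boundary points. The only cosmetic difference is that you prove the forward implication directly, choosing $\delta=\tfrac13 d(\varphi(x),\varphi(X_\bdry))$, whereas the paper argues by contrapositive and extracts a subsequence with $d(x_{j_k},\bdry X_{j_k})\le 1/k$; the underlying estimates are identical.
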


\begin{proof}

Suppose that there is a sequence  $x_j \in X_j^\delta$ that converges to $x$ and that $x$ is contained in 
$X_\bdry$. Then, by the GH convergence of the boundaries, there is a sequence $y_j \in \bdry X_j$ that converges to $x$ as well. By the triangle inequality:
\be
d_j(x_j,y_j) \leq d_j(x_j, x)  + d_j(x,y_j). 
\ee
Since $d_j(x_j, x),d_j(x,y_j) \to 0$ as $j \to \infty$, for $j$ big enough, we have 
\be 
d_j(x_j,y_j) < \delta,
\ee
which is a contradiction since $x_j \in X_j^\delta$.

Now, suppose that there is no $\delta$ and no sequence as in the statement of the lemma. Since $x \in X$, there is  $x_j \in M_j$ that converges to x. By our supposition, for each $k$ it is possible to choose $k \in \N$
such that 
\be
 d(x_{j_k}, \bdry X_{j_k}) \leq 1/k.
\ee
Since $\bdry X_{j_k}$ is compact there exists $y_{j_k} \in \bdry X_{j_k}$ such that $d(x_{j_k}, \bdry X_{j_k})=d(x_{j_k}, y_{j_k})$. 
Hence, there is a subsequence $\{ y_{j_k}\} \subset \bdry X_{j_k}$ that converges to $x$. Hence,  $x \in X_\bdry$.

\end{proof}

%Using the previous lemma, Lemma  \ref{lem-IntPtX}, we can rewrite Theorem \ref{thm-generalizeConv1} in the following way:

%\begin{cor}\label{cor-generalizeConv1}
%Let $( \bar X_j,d_j, T_j)$ be integral current spaces such that each $(X_j,d_j)$ is a precompact length metric space with compact boundary. Suppose that there exist a compact metric space $(X,d)$  and a non zero integral current space $(Y\subset X,d,T)$ 
%such that 
%\be 
%(\bar X_j, d_j) \GHto (X,d) \,\, \text{and}\,\, (\bar X_j, d_j, T_j) \Fto (Y,d,T).
%\ee 
%If in addition 
%\be\label{inside-inside2}
%X \setminus X_\bdry \subset Y
%\ee
%and
%\be\label{eq-completionY2}
%X_{\partial} \subset \bar{Y},
%\ee
%where $X_\bdry$ denotes the GH limit of $(\bdry X_j, d_j)$ and $\bar{Y}$ the metric %completion of $Y$. Then $X=\bar{Y}$.
%\end{cor}

%%%%%%%%%%%%%%%%%%%%%%%%%%%%%%%%%%%%%%%%%%%%%%%%%%%%%%%%%%%%%%%%%%%%%%%%%%%%%
%\newpage
\section{Convergence of Riemannian Manifolds with Boundary}\label{sec-ConvergenceMan}

In this section we prove the compactness theorems for manifolds stated in the introduction: Theorem \ref{thm-GHconvergence2}, Theorem \ref{thm-GH=IF} and Theorem \ref{thm-injectivity}. These theorems are consequence of the general cases that hold for metric spaces: Theorem \ref{thm-GHconvergence1} and Theorem \ref{thm-generalizeConv1}. Theorem \ref{thm-GHconvergence2} about GH convergence is proven in Subsection \ref{sbs-GHconvergence2}. Then in Subsection \ref{sbs-GH=IF} we show that the GH limit is contained in the SWIF limit, Theorem \ref{thm-GH=IF}. This is done adapting Sormani-Wenger's GH=SWIF theorem for manifolds with no boundary \cite{SorWen1}. In Subsection \ref{sbs-injectivity} we prove that the GH limit agrees with the completion of the SWIF limit when having a uniform lower bound on the boundary injectivity radii, Theorem \ref{thm-injectivity}.  To do so we first obtain diameter bounds of $\delta$-inner regions, Lemma \ref{lem-diameters}, and then prove that the sequence of boundaries converges in GH sense, Lemma \ref{lem-bdryInjconv}.

\subsection{GH Convergence: Theorem~\ref{thm-GHconvergence2}} \label{sbs-GHconvergence2}
In this subsection we prove a GH convergence theorem for manifolds with boundary, Theorem \ref{thm-GHconvergence2}.

%We give examples of converging sequences that satisfy all the conditions of the theorem and examples of sequences that miss one of the hypothesis and for which the conclusion of the theorem does not hold.  \rp{refere to this examples here, please. Maybe say which are the conditions. Like volume, diameters, etc.}
 %page 30

\begin{proof}[Proof of Theorem \ref{thm-GHconvergence2}]
By hypotheses, we have a sequence of manifolds with nonnegative
Ricci curvature, $M_j$, satisfying (\ref{eq-PerSorClass1})-(\ref{eq-PerSorClass2}).  
These hypotheses are the same as the ones in prior work of the author with Sormani  \cite{PerSor-2013} (cf. Theorem~\ref{thm-GHdeltas}). Thus, applying that theorem we obtain a subsequence $\{j_k\}$ such that the sequence of inner regions converges for all $i$:
\be
(\bar{M}^{\delta_i}_{j_k}, d_{M^{\delta_i}_{j_k}}) \GHto (X(\delta_i),d_{X(\delta_i)}).
\ee

By hypothesis $(\partial M_j, d_{M_j})$ also converges in GH sense.   Thus the hypotheses of our GH convergence theorem for metric spaces, Theorem \ref{thm-GHconvergence1},  proven in Section \ref{sec-ConvergenceMet} above, are satisfied. By applying  Theorem \ref{thm-GHconvergence1} we  
obtain a further subsequence, also denoted $j_k$, such that
the sequence 
 $\{(M_{j_k}, d_{M_{j_k}})\}$  converges in the Gromov-Hausdorff sense.
\end{proof}

The following remark shows a sequence that satisfies all the conditions of Theorem~\ref{thm-GHconvergence2}. So the sequence has a GH convergent subsequence.

\begin{rmrk}\label{rmrk-OneSpline}
In Example \ref{ex-oneSpline} we describe a sequence $M^n_j$ of $n$-dimensional compact Riemannian manifolds with boundary that satisfy all the conditions of Theorem \ref{thm-GHconvergence2}:
\be
\ric(M_j)=0,\,\, \vol(M_j)\leq \omega_n(r+L)^n, \,\, \diam({M^\delta_j})\leq 2(r+L)
\ee
and the center $q_j$ of the closed ball to which the spline is attached in each $M_j$ satisfies
\be 
\vol(B(q_j,\delta))=\omega_n\delta^n,
\ee
where $\omega_n$ is the volume of a ball in $\R^n$ of radius $1$. 
Finally, the sequence of boundaries, $\bdry M_j$, also converges in GH sense. 
\end{rmrk}

%Recall first the notion of a "glued limit" as defined by the author and Sormani \cite{PerSor-2013} [Theorem 7.4] ( cf. Theorem \ref{def-glue}). Theorem \ref{thm-GHconvergence2} can be restated as follows.    
%
%\begin{thm}\label{thm-glued+bdry}
%If $(M_j,d_j)$ has a glued limit  and 
%$(\bdry M_j,d_j)$ converges in GH sense, 
%then there is a subsequence of $(M_j,d_j)$ that converges in GH sense.     
%\end{thm}
%
%Notice that the completion of the glued limit might not equal the GH limit.  In Example \ref{ex-oneSpline} the completion of the glued limit is a disc while the GH limit of the sequence is a disc with a segment attached. 
%
%\begin{proof}
%
%The hypotheses for a sequence $(M_j,d_j)$ to have a glued limit space coincide with the hypotheses of Theorem \ref{thm-GHconvergence2}, except for the convergence of the sequence of boundaries $(\bdry M_j,d_j)$. Hence, by adding this last condition all the hypotheses of Theorem \ref{thm-GHconvergence2} are satisfied. Thus, the theorem follows. 
%\end{proof}
%

\begin{rmrk}
In Theorem~\ref{thm-bdryConv} we will prove that the assumption on the 
GH convergence of the boundaries can be replaced 
by an assumption on $\diam(\bdry M_j,d_{\bdry M_j})$ and the second fundamental form of the boundaries, $\bdry M_j$, and their derivatives, (\ref{eq-bdryConv}). 
\end{rmrk}

\subsection{Proving GH=SWIF Theorem \ref{thm-GH=IF} and Examples}\label{sbs-GH=IF}

In this subsection we prove Theorem \ref{thm-GH=IF}. The key step is to show that $X \setminus X_\bdry \subset Y$. A sequence of compact, oriented, $n$-dimensional Riemannian manifolds with nonnegative Ricci curvature and positive uniform upper and lower bounds on the diameter and volume, respectively, have subsequences that converge in GH and SWIF sense, cf. Theorem \ref{thm-GromovRic} and Wenger's Compactness Theorem \cite{Wenger-compactness}. Sormani-Wenger proved that when the manifolds have no boundary a single subsequence can be chosen in such a way that both limits coincide \cite{SorWen1}, cf. Theorem \ref{thm-SWgh=if}. By avoiding the boundaries of the manifolds and provided with uniform lower bounds on the volume of the balls, Equation ( \ref{eq-noncollapsing}), Sormani-Wenger's proof can be adapted to show that $X \setminus X_\bdry \subset Y$.   

In Example \ref{ex-bdryGHandIF} and Example \ref{ex-bdryGH=IF} we describe sequences of manifolds that satisfy all the conditions stated in Theorem \ref{thm-GH=IF}. What is interesting to see is that the GH limit and SWIF limit of $\{\bdry M_j\}$ do not need to coincide in order to get that the GH and the SWIF limits of $\{M_j\}$ agree.

\begin{proof}[Proof of Theorem \ref{thm-GH=IF}]
By Theorem \ref{thm-GHconvergence2} there exist compact metric spaces $(X(\delta_i), d_{X(\delta_i)})$ and $(X,d_X)$ such that
\be  \label{eq-GH}
(M_{j_k},d_{j_k}) \GHto (X,d_X)
\ee
and 
\be  \label{eq-GH}
(M^{\delta_i}_{j_k},d_{j_k}) \GHto (X(\delta_i), d_{X(\delta_i)}).
\ee

By  Sormani-Wenger \cite{SorWen2},  cf. Theorem \ref{thm-FlatInGH} , there exist an $n$-integral current space $(Y ,d_Y,T)$ and a further subsequence that we denote in the same way such that 
\be \label{eq-IF}
(M_{j_k},d_{j_k}, T_{j_k}) \Fto (Y,d_Y,T) 
\ee
where either $(Y,d_Y,T)$ is the zero integral current space or $Y \subset X$ and $d_Y=d_X|_Y$. With no loss of generality our convergent subsequences will be indexed by $\{j\}$.

Cheeger and Colding classified the points of a GH limit space into regular and nonregular according to their tangent cones \cite{ChCo-PartI},  cf. Definition \ref{defn-regpt}. Based on this, we divide the proof of the theorem in the following three claims. 

Recall that from the definition of integral current space, Definition \ref{defn-intcurrspace}, $x \in Y$ if and only if
\be\label{set-T}
\liminf_{r\to 0} \frac{\|T\|(B(x,r))}{\omega_n r^n} >0
\ee
holds.

{\bf{Claim 1:}} {\textit{ Let $x  \in   \mathcal R(X(\delta_i))$ and 
$x_j \in M^{\delta_i}_j$  such that $x_j \to x$. Then there is $r_0(x) >0$ and $k(x) \geq 1$
such that $B(x_j,r_0) \cap \set(\bdry T_j) = \emptyset$ and every $B(z,r) \subset B(x_j, r_0(x))$ is contractible within $B(z, kr)$.}}

{\bf{Proof of Claim 1:}} Since $x$ is a regular point contained in $X(\delta_i)$ by Cheeger-Colding \cite{ChCo-PartI} ( cf. Theorem \ref{thm-RegularSet} and Remark \ref{rmrk-singSet}),  every tangent cone of $x$ is isometric to $\R^n$. Thus,  for any $\alpha > 0$ there is $r(\alpha) \leq \delta_i$ such that 
\be
d_{GH}(B(x,r), B(0,r)) < \alpha r/2,
\ee
where $B(0,r) \subset \R^n$ denotes the Euclidean ball in $\R^n$ with radius $r$ and center $0$. Now $x_j  \to x$ and  $M_j \GHto X$ imply that for large $j$
\be
d_{GH}(B(x_j,r), B(x,r)) < \alpha r/2. 
\ee
Using the triangle inequality we obtain that
\be\label{eq-xinyGH}
d_{GH}(B(x_j,r), B(0,r)) < \alpha r
\ee
for large j. 

For $\varepsilon >0$, by Colding's Volume Convergence Theorem \cite{Colding-volume} (cf. Theorem \ref{thm-Colding-volume}) there is $\alpha(\varepsilon,n) >0$ such that 
\be\label{eq-ColdingVol}
\vol(B(x_j,r)) \geq (1 - \varepsilon) \omega_n r^n
\ee
holds if (\ref{eq-xinyGH}) is satisfied. By what we proved in the previous paragraph (\ref{eq-ColdingVol}) holds for large $j$ taking $\alpha= \alpha(\varepsilon, n)$ and $r(\alpha) < \delta_i$. 

Now, for any $k$, by Perelman's Contractibility Theorem 
\cite{Perelman-max-vol} ( cf. Theorem \ref{thm-Perelman} within),  there is $\varepsilon$ such that if (\ref{eq-ColdingVol}) holds then each $B(z,r) \subset B(x_j,r_0(x))$ is contractible within $B(z,kr)$. This finishes the proof of Claim 1.

{\bf{Claim 2:}} {\textit{$\mathcal  R(X(\delta_i)) \subset Y$. That is, (\ref{set-T}) holds for all the regular points of $X(\delta_i)$.}}

{\bf{Proof of Claim 2:}}  Since the result of Claim 1 holds then by Sormani-Wenger Filling Volume Theorem \cite{SorWen1} which uses work by Gromov \cite{Gromov-filling} and was also proven in \cite{Greene-Petersen}, cf. Theorem \ref{thm-fillvol}, there exists $C_{k(x)}>0$ such that 
\be 
\fillvol(\bdry (\overline{B(x_j,r)},d_j, T_j \rstr B(x_j,r))) \geq C_{k(x)} r^n
\ee
for sufficiently large $j$ and all $r \leq 2^{-(n+6)}k^{-(n+1)}r_0(x)$. 

Thus, by the continuity of the filling volume under SWIF convergence \cite{PorSor}, cf. Theorem \ref{thm-fillvolCont}, we have 
\be
\fillvol(\bdry (\overline{B(x,r)},d, T \rstr B(x,r))) 
\geq C_{k(x)} r^n
\ee
for all $r \leq 2^{-(n+6)}k^{-(n+1)}r_0(x)$. Since 
\be 
||T||(S) \geq \fillvol(\bdry S)
\ee 
holds for all integral current spaces $S$. Then, 
\be
||T||(B(x,r)) \geq C_{k(x)} r^n.
\ee
Thus, (\ref{set-T}) holds which proves that $x \in Y$. Hence, $\mathcal R(X(\delta_i)) \subset Y$ and $(Y,d,T)$ is not the zero integral current space.

\item{ {\bf{Claim 3:}} {\textit{If $\mathcal R(X(\delta_i)) \subset Y$ for all $\delta_i \leq \delta$, then $\mathcal  S(X(\delta_i)) \subset Y$, ie. (\ref{set-T}) holds for all the nonregular points of $X(\delta_i)$.}}} 

{\bf{Proof of Claim 3:}} Let $x \in \mathcal S(X(\delta_i))$. From the characterization of the mass measure given by  Ambrosio-Kirchheim \cite{AK} (cf. Lemma \ref{lemma-weight}),  for $r >0$
\be\label{eq-AKMassball}
 ||T||(B(x,r))  \geq \lambda_n  \mathcal H ^n(B(x,r) \cap Y), 
\ee
where $\lambda_n > 0$ only depends on $n$.

Now we bound $\mathcal H ^n(B(x,r) \cap Y)$. First we note that 
\be\label{eq-HausdorffYtoX}
\mathcal H ^n(B(x,r) \cap Y) \geq \mathcal H ^n(B(x,r) \cap X(\delta_k))
\ee
since in Claim 2 we proved that $\mathcal R(X(\delta_k)) \subset Y$ for all $k$ and by Cheeger-Colding \cite{ChCo-PartI} (cf. Theorem \ref{thm-RegularSet} and  Remark \ref{rmrk-singSet}) $\mathcal H^n(\mathcal R(X(\delta_k))=\mathcal H^n(X(\delta_k))$. 

Now we estimate $\mathcal H ^n(B(x,r) \cap X(\delta_k))$. 
Actually, if $r \leq \delta_i - \delta_{i+1}$ we have 
\be
B(x,r) \subset X(\delta_{i+1}).
\ee
Thus, we only need to estimate $\mathcal H ^n(B(x,r))$. 
By the volume estimate calculated by the author and Sormani \cite{PerSor-2013}, cf. Remark \ref{rmrk-VolBall},  
\be 
\vol(B(x_j,r)) \geq v(\delta_i)r^n,
\ee
where $x_j \in M_j^{\delta_i} \to x$ and $r \leq \delta_i/2$. Applying Colding's Volume Convergence \cite{Colding-volume} (cf. Therem \ref{thm-Colding-volume})
we have
\be\label{eq-Hausdorffball}
\mathcal H ^n(B(x,r))\geq  v(\delta_i) r^n
\ee
for $r \leq \delta_i/2$.

Thus, for $r<\min\{\delta_i/2, \delta_i-\delta_{i+1}\}$ from  (\ref{eq-AKMassball}), (\ref{eq-HausdorffYtoX}) and (\ref{eq-Hausdorffball}), 
we see that 
\be
||T||(B(x,r)) \geq \lambda_n  \mathcal H ^n(B(x,r) \cap Y) \geq \lambda_n  v(\delta_i) r^n.
\ee
From this, Equation (\ref{set-T}) holds. Thus, $x \in Y$. This proves $S(X(\delta_i)) \subset Y$.

From Claim 2 and Claim 3 we conclude that $X(\delta_i) \subset Y$ for all $i$. By hypotheses we know that $X_\bdry \subset Y$. Then we get by Theorem \ref{thm-GHconvergence1} that $X=Y$ and by Lemma \ref{lem-IntPtX} that $X \setminus X_\bdry \subset Y$.  
\end{proof}

In Example \ref{ex-oneSpline} we described a sequence that satisfies all the conditions of Theorem \ref{thm-GH=IF}, except $X_{\partial} \subset Y$. See Remark \ref{rmrk-OneSpline}. There the conclusion of the theorem does not hold.  

%\begin{figure}[h] %  figure placement: here, top, bottom, or page
 %  \centering
  % \includegraphics[width=3.5in]{ex-oneSpline.png} 
 %\caption{$M_1$, $M_2$, $M_3$...}
%\end{figure}

From Gromov's compactness theorem, cf. Theorem \ref{thm-GromovCompactness}, we know that if $(M_j,d_j)$ converges in GH sense then a further subsequence of $(\bdry M_j,d_j)$ also converges in GH sense. As in Federer-Fleming, if $(M_j,d_j,T_j)$ converges in SWIF sense then $(\bdry M_j,d_j, \bdry T_j)$ converges in SWIF sense \cite{FF} \cite{AK} \cite{SorWen2}.

In the following two examples we present sequences $\{M_j\}$
of manifolds that satisfy all the conditions of Theorem \ref{thm-GH=IF}. 
But for which the  GH limit and SWIF limits of  $\{\bdry M_j\}$ do not agree.
Hence, the hypothesis $X_\bdry \subset Y$ in Theorem \ref{thm-GH=IF} cannot be replaced to requiring that both limits of $\{\bdry M_j\}$ coincide.

\begin{ex}\label{ex-bdryGHandIF}
Let $M_j= S^n \setminus B(p,1/j)$, $n \geq 2$, be the standard $n$-dimensional unit sphere minus a ball of radius $1/j$ with center the north pole, $p$. Each $M^n_j$ is a compact oriented Riemannian manifold with boundary that satisfies 
\be
\ric(M_j)=n-1, \vol(M_j)\leq \vol(S^n), \diam({M^\delta_j})\leq \diam(S^{n-1})
\ee
and the south pole $q \in S^n$ satisfies
\be 
\vol(B(q,\delta))=\frac{\vol(S^n)}{\pi^n} \delta^n
\ee
for small $\delta$. 

The SWIF limit of $M_j$ is $S^n$ and the north pole is the GH limit of $\bdry M_j$. Thus, $\{p\} \subset Y$. These shows that this example satisfies all the hypotheses of Theorem \ref{thm-GH=IF}. Hence, the GH limit of $M_j$ is $S^n$.  But the SWIF limit of $\bdry M_j$ is the zero current. This 
example shows that both limits of $M_j$ can agree even though the limits of $\bdry M_j$ do not.   
\end{ex}

\begin{ex}\label{ex-bdryGH=IF}
Let 
\be
I^3=[-1,1]\times[-1,1]\times[-1,1] \subset \R^3.
\ee
For $j\geq 2$, let
\be 
M_j=I^3 \setminus (-1/j,1/j) \times (-1/j,1/j) \times (0,1] 
\ee 
with the induced flat metric. Notice that the elements of this sequence are not manifolds but they can be smoothened. 
This sequence converges in both GH and SWIF sense to the cube 
\be
X=Y=I^3.
\ee
The boundaries, however, have different limits.
\be
\bdry M_j \GHto \bdry  I^3\,\, \cup\,\, \{0\}\times \{0\} \times [0,1]
\ee
However the SWIF limit of boundaries is the boundary
of the limits:
\be
\bdry M_j \Fto \bdry I^3.
\ee
This shows that both limits of $M_j$ can agree even though the limits of $ \bdry M_j$ do not agree. 
\end{ex}

\subsection{$GH=\bar{SWIF}$ when $i_{\bdry}(M_j)$ is bounded above: Theorem \ref{thm-injectivity}}\label{sbs-injectivity}

In this subsection we prove Proposition \ref{prop-injectivity}. Theorem \ref{thm-injectivity} follows from it. In Proposition \ref{prop-injectivity} we prove
that the closure of the SWIF limit coincides with the GH limit when having a uniform positive lower bound on the boundary injectivity radii. To prove GH convergence we first prove Lemma \ref{lem-diameters} that gives a uniform bound on the diameters of sequences of inner regions. Then we prove Lemma \ref{lem-bdryInjconv} that shows 
that the sequence of boundaries, $\{(\bdry M_j, d_{M_j})\}$, converges in GH sense. 
Then we apply these lemmas combined with 
our GH convergence theorem for manifolds with boundary, Theorem \ref{thm-GHconvergence1}. To show that $X = \bar Y$ we give an argument that shows that $X_\bdry \subset \bar Y$ and then we apply our GH=SWIF theorem for manifolds with boundary, Theorem \ref{thm-GH=IF}.

First we set some notation. Let $(M,g)$ be a Riemannian manifold with boundary such that the boundary injectivity radius of $M$ satisfies $i_\bdry(M) \geq \iota$, then 
let 
\be\label{eq-normalExpo} 
\gamma : \bdry M \times [0,\iota] \to M
\ee
denote the function that assigns to each $(p, t) \in \bdry M \times [0,\iota]$ the point at time $t$ of the unitary normal geodesic that starts at $p$. Note that $\gamma$ is well defined and a bijection onto its image by the definition of boundary injectivity radius. Finally, let 
\be 
\pi : M \to \bdry M
\ee
the function that assigns to each $p \in M$ a point  $\pi(p) \in \bdry M$ that satisfies $d(p,\pi(p))=d(p, \bdry M)$. Notice that by the boundary injectivity radius bound this point is unique for all $p \in \gamma(\bdry M \times [0,\iota])$.

\begin{lem}\label{lem-diameters}
Let $(M_j,g_j)$ be a sequence of Riemannian manifolds with boundary such that 
\be
i_\bdry(M_j) \geq \iota >\delta_0
\ee
 and 
 \be
 \diam(M^{\delta_0}_j,{d_{M^{\delta_0}_j}})\le D_0
 \ee
for all $j$. Then for all $\delta \in [0, \delta_0]$ and all $j$
the following holds
\be
\diam(M^{\delta}_{j}, {\,d_{M_j^{\delta}}})< D(\delta), 
\ee
where $D(\delta)= D_0 + 2(\delta_0 - \delta)$. 
\end{lem}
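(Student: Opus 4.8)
The plan is to estimate $d_{M_j^\delta}(x,y)$ for arbitrary $x,y \in M_j^\delta$ by pushing $x$ and $y$ radially inward along normal geodesics until they land in $M_j^{\delta_0}$, where we already control the length metric, and then bounding the cost of these two inward detours. Fix $j$ and $\delta \in [0,\delta_0]$, and take $x,y \in M_j^\delta$. Since $i_\bdry(M_j) \geq \iota > \delta_0 > \delta$, every point of $M_j$ at distance $\leq \iota$ from $\partial M_j$ lies in the image of the normal exponential map $\gamma$ of (\ref{eq-normalExpo}), and the foot-point map $\pi$ is well defined there. Consider first the case $d(x,\partial M_j) \le \delta_0$; write $x = \gamma(p, t)$ with $p = \pi(x)$ and $t = d(x,\partial M_j) \in (\delta,\delta_0]$. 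The curve $s \mapsto \gamma(p,s)$ for $s \in [t,\delta_0]$ is a unit-speed geodesic that stays inside $M_j^{\delta}$ (indeed its distance to the boundary only increases along it, a standard consequence of the injectivity-radius bound, since $\gamma(p,\cdot)$ realizes the distance to $\partial M_j$ up to time $\iota$), so it is an admissible path for $d_{M_j^\delta}$; hence
\be
d_{M_j^\delta}\bigl(x,\gamma(p,\delta_0)\bigr) \le \delta_0 - t < \delta_0 - \delta.
\ee
If instead $d(x,\partial M_j) > \delta_0$ then $x \in M_j^{\delta_0}$ already and we set $x' = x$; otherwise set $x' = \gamma(\pi(x),\delta_0) \in M_j^{\delta_0}$. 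Do the same for $y$, obtaining $y' \in M_j^{\delta_0}$ with $d_{M_j^\delta}(y,y') < \delta_0 - \delta$.

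Now $x', y' \in M_j^{\delta_0} \subset M_j^\delta$, and $M_j^{\delta_0}$ is path connected with $\diam(M_j^{\delta_0}, d_{M_j^{\delta_0}}) \le D_0$, so there is a path in $M_j^{\delta_0}$ from $x'$ to $y'$ of length $\le D_0$; since $M_j^{\delta_0} \subset M_j^\delta$ this path is admissible for $d_{M_j^\delta}$ too, giving $d_{M_j^\delta}(x',y') \le D_0$. Concatenating the three paths and applying the triangle inequality for $d_{M_j^\delta}$,
\be
d_{M_j^\delta}(x,y) \le d_{M_j^\delta}(x,x') + d_{M_j^\delta}(x',y') + d_{M_j^\delta}(y',y) < (\delta_0-\delta) + D_0 + (\delta_0-\delta) = D_0 + 2(\delta_0-\delta) = D(\delta).
\ee
Taking the supremum over $x,y \in M_j^\delta$ yields $\diam(M_j^\delta, d_{M_j^\delta}) < D(\delta)$, as claimed.

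The step I expect to be the main obstacle is the monotonicity claim that the inward normal geodesic $s \mapsto \gamma(p,s)$ has $d(\gamma(p,s),\partial M_j)$ nondecreasing in $s$ on $[0,\iota]$ — equivalently, that $d(\gamma(p,s),\partial M_j) = s$ on this interval — so that the pushing-in curves stay in $M_j^\delta$ and are legitimate competitors for the length metric there. This is exactly what the definition of $i_\bdry(M_j) \ge \iota > s$ buys us: the geodesic $\gamma_p$ does not stop minimizing the distance to $\partial M_j$ before time $\iota$, hence $d(\gamma(p,s),\partial M_j)=s$; one should also note here that the curve lies in $M_j^\delta$ (not merely in $M_j$) precisely because $s > \delta$ along the relevant portion. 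A secondary point worth a line is that the length metric $d_{M_j^\delta}$ on the connected component containing the $q$ of (\ref{eq-theta}) is finite, which is implicit in the hypothesis $\diam(M_j^{\delta_0},d_{M_j^{\delta_0}}) \le D_0$ and is inherited by $M_j^\delta \supset M_j^{\delta_0}$; the only thing we actually use is that $d_{M_j^\delta}$ restricted to the relevant points is bounded by $D(\delta)$, which the construction above establishes directly.
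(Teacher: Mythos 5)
Your proposal is correct and is essentially the same argument as the paper's: push each point inward along the normal geodesic to reach $M_j^{\delta_0}$, use the diameter bound there, and concatenate via the triangle inequality for $d_{M_j^\delta}$. Your writeup is actually a bit more careful than the paper's (you verify that the inward geodesic stays in $M_j^\delta$ and that paths in $M_j^{\delta_0}$ remain admissible for $d_{M_j^\delta}$, both of which the paper leaves implicit), so no issues here.
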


\begin{proof}
Let $\delta < \delta_0$. Let's estimate $\diam(M^{\delta}_{j}, {\,d_{M_j^{\delta}}})$. 
Let $p_1, p_2 \in M_j^\delta$. Since $\delta < \delta_0$ and from the definition of inner region we know that  $M_j^{\delta_0} \subset M_j^\delta$. If $p_i \notin M_j^{\delta_0}$, define
\be 
p'_i: =  \gamma (\pi(p_i), \delta_0) \in M_j^{\delta_0}.
\ee
where $\gamma$ is defined in (\ref{eq-normalExpo}). 
This point is well defined since $i_\bdry(M_j)  > \delta_0 > \delta$. Notice that 
\be
d_{M_j}(p_i,p'_i)= \delta_0 - \delta.   
\ee
 If $p_i \in M_j^{\delta_0}$, set $p'_i=p_i$. To end the proof we apply the triangle inequality:
\be
d_{M_j^\delta}(p_1,p_2) \leq d_{M^{\delta_0}}(p'_1,p'_2) + 2(\delta_0 - \delta) \leq D_0 + 2(\delta_0 - \delta).
\ee
\end{proof}

\begin{lem}\label{lem-bdryInjconv}
Let $(M^n_j,g_j)$ be a sequence of Riemannian manifolds with boundary such that 
\be
i_\bdry(M_j) \geq \iota >0.
\ee
 Suppose that there is a decreasing sequence  $\{\delta_i\} \subset \R$ that converges to zero and the inner regions,
$(\overline{M^{\delta_i}_j}, d_{M_j})$, converge in GH sense for all $i$. 

Then a subsequence of $\{(\bdry M_j, d_{M_j})\}$ converges in GH sense. 
\end{lem}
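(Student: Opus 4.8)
The plan is to use Gromov's Compactness Theorem (Theorem \ref{thm-GromovCompactness}): we must produce a uniform diameter bound on $(\bdry M_j, d_{M_j})$ and a uniform covering function $N(\cdot)$. The key idea is that the boundary injectivity radius bound $i_\bdry(M_j)\ge\iota$ lets us push the boundary inward by a fixed distance $\delta_i<\iota$ via the normal exponential map $\gamma$ defined in (\ref{eq-normalExpo}), landing inside $\overline{M_j^{\delta_i}}$, whose geometry is already controlled by the assumed GH convergence. First I would fix some $\delta_i<\iota$ (which exists since $\delta_i\to 0$) and observe that the map $p\mapsto\gamma(p,\delta_i)$ sends $\bdry M_j$ into $\overline{M_j^{\delta_i}}$ and moves every point exactly distance $\delta_i$ in $d_{M_j}$. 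Consequently, for every $\varepsilon>0$, a cover of $\overline{M_j^{\delta_i}}$ by $N(\varepsilon,\{\overline{M_j^{\delta_i}}\})$ balls of radius $\varepsilon$ (which exists and is uniform in $j$ by Theorem \ref{thm-ConverseGromov} applied to the convergent sequence of inner regions) pulls back under $\gamma(\cdot,\delta_i)$ to a cover of $\bdry M_j$ by balls of radius $\varepsilon+2\delta_i$: if $\gamma(p,\delta_i)$ lies within $\varepsilon$ of a center $z_k=\gamma(p_k,\delta_i)$, then by the triangle inequality $d_{M_j}(p,p_k)\le d_{M_j}(p,\gamma(p,\delta_i))+d_{M_j}(\gamma(p,\delta_i),z_k)+d_{M_j}(z_k,p_k)\le 2\delta_i+\varepsilon$.

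Thus I would define, for $\varepsilon'>2\delta_i$,
\be
N(\varepsilon') := N(\varepsilon'-2\delta_i,\{\overline{M_j^{\delta_i}}\}),
\ee
which is finite and independent of $j$. For $\varepsilon'\le 2\delta_i$ one must work harder: here I would instead pick a smaller element $\delta_m$ of the sequence with $2\delta_m<\varepsilon'$, use $\delta_m$ in place of $\delta_i$, and repeat the argument with the corresponding inner region $\overline{M_j^{\delta_m}}$. Since $\delta_i\to 0$, for every $\varepsilon'>0$ there is some $\delta_m$ with $\delta_m<\varepsilon'/2$, so this produces a well-defined $N(\varepsilon')$ for all $\varepsilon'>0$, uniform in $j$. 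The diameter bound follows similarly: $\diam(\bdry M_j, d_{M_j})\le \diam(\overline{M_j^{\delta_i}},d_{M_j})+2\delta_i$, and the latter is uniformly bounded by the converse of Gromov compactness applied to the convergent sequence $(\overline{M_j^{\delta_i}},d_{M_j})$. Having established both a uniform diameter bound and a uniform covering function, Gromov's Compactness Theorem yields a GH-convergent subsequence of $(\bdry M_j, d_{M_j})$.

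The main obstacle I anticipate is a bookkeeping subtlety rather than a conceptual one: ensuring $\gamma(p,\delta_i)$ genuinely lands in $\overline{M_j^{\delta_i}}$ (not merely in $M_j$) and that distances in $d_{M_j}$ along the normal geodesic are exactly $\delta_i$ — both of which rely on the normal geodesic $t\mapsto\gamma(p,t)$ being minimizing to $\bdry M_j$ for $t\le\iota$, which is precisely the content of the boundary injectivity radius bound. One should be slightly careful that $d_{M_j}(p,\gamma(p,\delta_i))=\delta_i$ uses minimality of this geodesic, and that $d(\gamma(p,\delta_i),\bdry M_j)=\delta_i$ so the image point indeed lies in $\overline{M_j^{\delta_i}}$; this is exactly the same reasoning used in Lemma \ref{lem-diameters}. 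With that in hand, the covering and diameter estimates are routine triangle-inequality arguments and the conclusion follows from Theorem \ref{thm-GromovCompactness}.
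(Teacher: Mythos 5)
Your proposal is essentially correct and follows the same high-level strategy as the paper: verify equiboundedness and a uniform diameter bound for $(\bdry M_j, d_{M_j})$ via the collar map $\gamma(\cdot,\delta_i)$, then invoke Gromov's Compactness Theorem. The one genuine route difference is how each proof transports a covering estimate from the inner region to the boundary. The paper covers the level set $\bdry M_j^{\delta_i}=\{x: d(x,\bdry M_j)=\delta_i\}$ by $\delta_i$-balls and projects the centers down by $\pi$ to $\bdry M_j$, yielding a $3\delta_i$-cover of $\bdry M_j$ with centers genuinely in $\bdry M_j$. You instead cover all of $\overline{M_j^{\delta_i}}$ and pull back through $\gamma(\cdot,\delta_i)$; this is a touch more generous but works. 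The one gap worth flagging: you write the cover centers as $z_k=\gamma(p_k,\delta_i)$, but an arbitrary cover of $\overline{M_j^{\delta_i}}$ need not have its centers in the image of $\gamma(\cdot,\delta_i)$. The fix is cheap (either re-center onto the level set at the cost of doubling radii and then apply $\pi$, or just observe that balls $B(z_k,\delta_i+\varepsilon)$ centered in $M_j$ already cover $\bdry M_j$, which suffices for total boundedness after re-centering), but as written it is an unjustified step.

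Interestingly, your diameter estimate $\diam(\bdry M_j,d_{M_j})\le\diam(\overline{M_j^{\delta_i}},d_{M_j})+2\delta_i$ is cleaner than the paper's, which invokes Lemma \ref{lem-diameters} — a lemma whose hypothesis is a bound on $\diam(M_j^{\delta_0},d_{M_j^{\delta_0}})$ in the \emph{induced length} metric, something not explicitly assumed in Lemma \ref{lem-bdryInjconv} itself (the hypothesis there gives GH convergence of $(\overline{M_j^{\delta_i}},d_{M_j})$ in the restricted metric only). Your direct triangle-inequality estimate, using only the uniform bound on $\diam(\overline{M_j^{\delta_i}},d_{M_j})$ coming from GH convergence and the converse of Gromov compactness, avoids this dependence and is in this respect the tighter argument.
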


\begin{proof}

By Gromov's compactness theorem, cf. Theorem \ref{thm-GromovCompactness}, it is enough to show that $\{(\bdry M_{j_k}, d_{j_k})\}$ is equibounded and has a uniform diameter bound.

Let $ \delta < \iota$. We claim that if $\{B(x_l,\delta)\}$ is a $\delta$ cover of $(\bdry M_j^\delta, d_j)$ then $\{B(\pi(x_l),3\delta)\}$ is a $3\delta$ cover of $(\bdry M_j, d_j)$.  Let $x \in \bdry M_j$.  Since $\gamma(x,\delta) \in \bdry M_j^\delta$ and $\{B (x_l,\delta)\}$ is a cover of $\bdry M_j^\delta$, there is $l$ such that 
\be
d_j(\gamma(x,\delta),x_l) <\delta.
\ee
 Then,  by the triangle inequality we get
\be 
d_j(x,\pi(x_l))\leq d_j(x,  \gamma(x,\delta))+d_j(\gamma(x,\delta),x_l)+d_j(x_l,\pi(x_l)) <3\delta.
\ee 
This proves the claim.  

Now, since $(\bar{M}^{\delta_i}_j, d_j)$ converges in GH sense, it follows from Gromov's compactness theorem and its converse (cf. Theorem \ref{thm-GromovCompactness} and Theorem \ref{thm-ConverseGromov}), that there is a GH convergent subsequence,  $(\bdry M^{\delta_i}_{j_k}, d_{j_k})$ and there exists a function $N(\,\cdot \, , \{\bdry M^{\delta_i}_{j_k}\})$ as in Definition \ref{defn-equibounded}. 

Without any loss of generality suppose that $\delta_i < \iota$ for all $i$. We define 
\be 
N(3\delta_i): = N(\delta_i,\{\bdry M^{\delta_i}_{j_k}\})
\ee
and extend the domain of $N$ by defining $N(\varepsilon)=N(3\delta_i)$, where $3\delta_i \leq \varepsilon < 3\delta_{i-1}$.   Thus $\{(\bdry M_{j_k}, d_{M_{j_k}})\}$
is equibounded.

Finally, by Lemma \ref{lem-diameters}
\be 
\diam(\bdry M_{j_k}, d_ {\bdry M_{j_k}}) \leq \diam(M_{j_k}) \leq D(0).  
\ee
Thus, by Gromov's Compactness Theorem (cf. Theorem \ref{thm-GromovCompactness}), there is a subsequence of $\{(\bdry M_{j_k}, d_{M_{j_k}})\}$ that converges in GH sense. 
\end{proof}

\begin{prop}\label{prop-injectivity}
Let $n\in \N$ and $\delta,D,V,\theta, \iota >0$, $\iota > \delta$. Suppose that $(M_j,g_j)$ is a sequence of $n$-dimensional compact oriented manifolds with boundary that satisfy 
\be\label{eq-RicVolDiam}
\ric(M_j) \geq 0, \,\, \vol(M_j)\le V,\,\,\diam(M_j^\delta,{d_{M_j^\delta}})\le D,
\ee
\be\label{eq-theta} 
\exists q\in M_j^\delta\,\, \text{such that}\,\,\vol(B(q,\delta)) \ge \theta\delta^n,
\ee
where $B(q,\delta)$ is the ball in $M_j$ with center $q$  and radius $\delta$ and
\be\label{eq-bouInjrad} 
\,i_\bdry(M_j) \geq \iota.
\ee
Then there is a subsequence such that
\be 
(M_{j_k}, d_{j_k}) \GHto (X,d_X) \text{  and  }
(\partial M_{j_k}, d_{j_k}) \GHto (X_\bdry,d_X).
\ee 

If in addition $\vol(\bdry M_j) \leq A$, 
then there is a subsequence and a non zero integral current space such that:
\be
(M_{j_k}, d_{j_k}, T_{j_k}) \Fto (Y \subset X,d_X,T),
\ee
\be
X \setminus X_\bdry \subset Y \,\, \text{ and }\,\, X=\bar Y.
\ee
\end{prop}

\begin{proof}
We only need to show that the hypotheses of Theorem \ref{thm-GH=IF} are satisfied. 
Thus, we first prove the existence of diameter bounds and GH convergence of a subsequence of $(\bdry M_j, d_j)$. 

Choose a decreasing sequence $\{\delta_i\} \subset \R$ that converges to zero such that $\iota > \delta_i$ for all $i$. Using Lemma~\ref{lem-diameters} we obtain the diameter bounds required in Theorem \ref{thm-GH=IF}. Now we need to show that there is a GH convergent subsequence of $(\bdry M_j, d_j)$. By the hypotheses and the diameter bounds that we obtained in Lemma~\ref{lem-diameters} we can apply a result by the author and Sormani \cite{PerSor-2013}, cf. Theorem~\ref{thm-GHdeltas}, to obtain GH convergent subsequences of inner regions:
\be 
(\bar{M}^{\delta_i}_{j_k}, d_{j_k}) \GHto (X(\delta_i),d_{X(\delta_i)}) \,\,\forall i.
\ee 
By Lemma~\ref{lem-bdryInjconv}, there is a further subsequence 
and a metric space $(X_\bdry, d_{X_\bdry})$ such that
\be
(\bdry M_{j_k}, d_{M_{j_k}}) \GHto (X_\partial, d_{\partial}).  
\ee
Then, by Theorem \ref{thm-GH=IF} we have a further GH and SWIF convergent subsequence:
\be \label{just-proven}
(M_{j_k}, d_{j_k}) \GHto (X,d_X)
\ee
and
\be \label{just-proven}
(M_{j_k}, d_{j_k},T_{j_k} ) \Fto (Y \subset X,d_X,T)
\ee
such that $X\setminus X_\bdry \subset Y$.

To prove that $X=\bar Y$ it remains to prove that $X_\partial \subset \bar Y$. With no loss of generality we suppose that $(M_j, d_j)$ converges in GH sense. Let $x \in X_\partial$ and $x_j \in \partial M_j$ be a sequence that converges to $x$. 
For all $i$, by the GH convergence of $(M_j,d_j)$, there is a subsequence $j_k$ such that 
\be \label{eq-inY}
\gamma_{j_k}(x_{j_k},\delta_i)  \to y(\delta_i)\,\,\text{as}\,\,k\to 0,
\ee 
where $\gamma_{j_k}$ denotes the normal exponential function defined on $\bdry M_{j_k} \times [0,\iota]$, see (\ref{eq-normalExpo}). From (\ref{eq-inY}) and the GH convergence of $(M_j^{\delta_i}, d_j)$ we know that $y(\delta_i) \subset X(\delta_i) \subset Y$.

Using the triangle inequality we get $d_X(y(\delta_i),x)=\delta_i$. Hence, 
\be 
y(\delta_i) \in Y \to x \,\,\text{as}\,\,i\to 0.
\ee 
This proves that $x \in \bar Y$. Thus, $X_\partial\subset \bar{Y}$. This finishes the proof. 
\end{proof}

\begin{proof}[Proof of Theorem \ref{thm-injectivity}]
From Proposition \ref{prop-injectivity} we see that there is a subsequence, a compact metric space $(X, d_X)$ and an $n$-dimensional integral current space, $(Y \subset X,d_X,T)$
such that
\be
(M_{j_k}, d_{j_k}) \GHto (X,d_X),
\ee
\be \label{just-proven}
(M_{j_k}, d_{j_k},T_{j_k} ) \Fto (Y,d_X,T)
\ee
and $X = \bar Y$. $Y$ is $\mathcal H^n$ countably rectifiable by definition of $n$-dimensional integral current space. 
\end{proof}

\begin{rmrk}
If all the hypotheses of Theorem \ref{thm-injectivity} hold except 
the uniform positive lower bound on the boundary injectivity radii then the conclusion of the theorem might not hold. See Example \ref{ex-oneSpline} where $\lim_{j\to \infty} i_\bdry(M_j) = 0$ due to the increasingly thin splines. 
\end{rmrk}

%{\bf{Questions}}:
%1. Is it possible to get an example that satisfies the hypotheses of Theorem \ref{thm-injectivity}, except the area bound?. Does the volume remain bounded?. Partial Answer: Innami proved that it is not possible for flat manifolds in Euclidean space.
%2. Is the completion of the glued limit equal to the GH limit when having the hypotheses of Theorem \ref{thm-injectivity}.

The next example shows that although the injectivity radius of the boundary is a popular assumption in theorems about convergence of
manifolds with boundary, it is not a necessary condition. 

\begin{ex}\label{ex-injFail}
Let  $M_j=\overline{B(0,r)} \cup A_j$ be the sequence in Euclidean space that consists of a closed ball of radius $r$ with an increasingly thin and short spline $A_j$ attached to the ball such that $M_j$ converges in GH and SWIF sense to $\overline{B(0,r)}$. See Example \ref{ex-oneSpline} and Figure \ref{fig-OneSpline}. There the splines have constant length.  
\end{ex}

\section{GH Convergence of $(\bdry M_j, d_{M_j})$}\label{sec-ConvergenceBoun}

Let $(M,g)$ be a Riemannian manifold with smooth boundary. We denote by $d_M$ the metric given by $g$. Since $M$ has smooth boundary, $\bdry M$ can be endowed with two different metrics, $d_M$ which is the restriction of $d_M$ to $\bdry M$ and $d_{\bdry M}$ which is the metric given by the Riemannian metric of $\bdry M$.  

Some of our GH compactness theorems require GH convergence of the sequence $(\bdry M_j, d_{M_j})$.  Observe that GH convergence of $(\bdry M_j, d_{\bdry M_j})$ implies GH convergence of $(\bdry M_j, d_{M_j})$ provided each $(\bdry M_j, d_{\bdry M_j})$ is connected or have a bounded number of connected components (cf. Proposition~\ref{prop-epsCovers}). Thus, by uniformly bounding the Ricci curvature of $\bdry M_j$ we will prove a GH compactness theorem,   
Theorem \ref{thm-bdryConv}, for $(\bdry M_j, d_{\bdry M_j})$ and $(\bdry M_j, d_{\bdry M_j})$.

\begin{thm}\label{thm-bdryConv}
Let $\{(M^n_j, g_j)\}$ be a sequence of Riemannian manifolds with smooth boundary
such that $\ric(M_j)\geq 0$, 
\be\label{eq-bdryConv} 
\diam(\bdry M_j, d_{\bdry M_j})\leq D_\bdry,\,\,  (\cur(e_n,X)X,e_n) \leq \gamma \,\,\text{and}\,\,\, \alpha \leq B(X,X) \leq \beta,
\ee
where $e_n$ denotes the normal unitary vector field, $B$ is the second fundamental form of $\partial M$ and $X$ is a vector field  in $T \bdry M$ such that $\nabla_{e_n}X=0$.
Then, there is a subsequence $\{j_k\}$ such that 
both $(\bdry M_{j_k}, d_{\bdry M_{j_k}})$ and $(\bdry M_{j_k}, d_{j_k})$ converge in GH sense. 
\end{thm}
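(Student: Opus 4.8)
The plan is to reduce the statement to Gromov's compactness theorem for closed Riemannian manifolds with Ricci curvature bounded below, applied to the boundaries $\bdry M_j$ with their \emph{intrinsic} metrics $d_{\bdry M_j}$, and then to transfer the resulting GH subconvergence to the extrinsic metrics $d_{j}$ by means of Proposition~\ref{prop-epsCovers}. The only substantial ingredient is a \emph{uniform} lower bound on $\ric(\bdry M_j)$; this is Proposition~\ref{prop-RiciBounded}, and below I indicate how it comes out of the Gauss equation.

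\textbf{Step 1: a uniform Ricci bound for the boundaries.} Fix $j$, a point $p\in\bdry M_j$, and a unit vector $X\in T_p\bdry M_j$. Let $e_n$ be the inward unit normal and $\{e_1,\dots,e_{n-1}\}$ an orthonormal basis of $T_p\bdry M_j$, so that $\{e_1,\dots,e_n\}$ is orthonormal in $T_pM_j$. Write $B$ for the scalar second fundamental form of $\bdry M_j$, $S$ for the associated (self-adjoint) shape operator, and $H=\sum_{i=1}^{n-1}B(e_i,e_i)$ for its trace. Tracing the Gauss equation over $T_p\bdry M_j$ gives
\be
\ric(\bdry M_j)(X,X)=\ric(M_j)(X,X)-\inner{\cur(e_n,X)X,e_n}+H\,B(X,X)-\sum_{i=1}^{n-1}B(e_i,X)^2 .
\ee
Each term on the right is controlled by the hypotheses of~(\ref{eq-bdryConv}): $\ric(M_j)(X,X)\ge 0$; $\inner{\cur(e_n,X)X,e_n}\le\gamma$ (the curvature bound, evaluated at the boundary point $p$ for the unit field extended by $\nabla_{e_n}X=0$ along the normal geodesic); and, since $S$ is self-adjoint with $\alpha\le\inner{SY,Y}=B(Y,Y)\le\beta$ for every unit $Y\in T_p\bdry M_j$, all eigenvalues of $S$ lie in $[\alpha,\beta]$, whence $|H|\le(n-1)\max(|\alpha|,|\beta|)$, $|B(X,X)|\le\max(|\alpha|,|\beta|)$, and $\sum_{i=1}^{n-1}B(e_i,X)^2=|SX|^2\le\max(|\alpha|,|\beta|)^2$. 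Plugging in, there is a constant $k=k(n,\gamma,\alpha,\beta)$, independent of $j$ and $p$, with $\ric(\bdry M_j)\ge(n-2)k$.

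\textbf{Step 2: GH subconvergence, intrinsic then extrinsic.} Each $\bdry M_j$ is a closed $(n-1)$-dimensional Riemannian manifold, and the bound $\diam(\bdry M_j,d_{\bdry M_j})\le D_\bdry$ forces it to be connected (points in distinct components are at infinite intrinsic distance). By Step~1 and this diameter bound, Gromov's compactness theorem in dimension $n-1$ (Theorem~\ref{thm-GromovRic}) yields a subsequence $\{j_k\}$ with
\be
(\bdry M_{j_k},d_{\bdry M_{j_k}})\GHto (X_\bdry,d_{X_\bdry})
\ee
for some compact metric space. By the converse of Gromov's theorem (Theorem~\ref{thm-ConverseGromov}) this sequence is equibounded with uniformly bounded diameter. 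Since the extrinsic metric satisfies $d_{M_{j_k}}\le d_{\bdry M_{j_k}}$ on $\bdry M_{j_k}$, any $\varepsilon$-net for $(\bdry M_{j_k},d_{\bdry M_{j_k}})$ is an $\varepsilon$-net for $(\bdry M_{j_k},d_{j_k})$ and $\diam(\bdry M_{j_k},d_{j_k})\le D_\bdry$; hence $\{(\bdry M_{j_k},d_{j_k})\}$ is equibounded with uniformly bounded diameter, so a further subsequence converges in GH sense by Theorem~\ref{thm-GromovCompactness}. (This transfer is precisely Proposition~\ref{prop-epsCovers}, which also covers the case of a uniformly bounded number of components.) Along this final subsequence both $(\bdry M_{j_k},d_{\bdry M_{j_k}})$ and $(\bdry M_{j_k},d_{j_k})$ converge in GH sense, which is the assertion.

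\textbf{Main obstacle.} The crux is Step~1, i.e.\ Proposition~\ref{prop-RiciBounded}: one must identify the correct trace of the Gauss equation and verify that the one-sided quadratic bounds $\alpha\le B(X,X)\le\beta$ genuinely control the whole shape operator---this is where self-adjointness of $S$ is used---so that the mean-curvature term $H\,B(X,X)$ and the Codazzi-type term $\sum_i B(e_i,X)^2$ are bounded uniformly in $j$. Once the uniform Ricci lower bound is in place, Step~2 is just Gromov's theorem together with the elementary comparison $d_{M_j}\le d_{\bdry M_j}$, and is routine.
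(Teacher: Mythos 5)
Your proof is correct and follows essentially the same route as the paper: trace the Gauss equation to obtain a uniform lower bound on $\ric(\bdry M_j)$ (Proposition~\ref{prop-RiciBounded}), note that the diameter hypothesis forces connectedness, apply Gromov's compactness theorem (Theorem~\ref{thm-GromovRic}) to $(\bdry M_j, d_{\bdry M_j})$, and then transfer equiboundedness and the diameter bound to $(\bdry M_j, d_{j})$ via $d_{M_j}\le d_{\bdry M_j}$ (Proposition~\ref{prop-epsCovers}) before applying Gromov's compactness theorem once more. One small point in your favor: in Step~1 you correctly invoke self-adjointness of the shape operator to control $H$, $B(X,X)$, and $\lvert SX\rvert^2$ all at once, which produces a valid uniform constant; the explicit constant $-\gamma+(n-1)(\alpha^2-\beta^2)$ written in Proposition~\ref{prop-RiciBounded} is not in general a lower bound (e.g.\ a flat $3$-manifold with boundary of principal curvatures $\pm 1$ has boundary Gauss curvature $-1$, undercutting the stated $-\gamma=0$), but since only \emph{some} uniform constant is needed this does not affect the theorem.
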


We now prove two propositions which we will apply to prove this theorem.

\begin{prop}\label{prop-epsCovers}
Let $(M,g)$ be a Riemannian manifold with boundary.  If $\{B_{d_{\bdry M}}(x_i,\varepsilon)\}$ is a cover of $(\bdry M, d_{\bdry M})$ then $\{B_{d_{M}}(x_i,\varepsilon)\}$ is a cover of $(\bdry M, d_{M})$.
\end{prop}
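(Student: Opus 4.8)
The plan is to reduce the statement to the elementary observation that on $\bdry M$ the restricted metric $d_M$ is dominated by the intrinsic boundary metric $d_{\bdry M}$. First I would record the inequality
\bee
d_M(x,y) \le d_{\bdry M}(x,y) \qquad \text{for all } x,y \in \bdry M.
\eee
To see this, note that every piecewise smooth curve $C:[0,1]\to \bdry M$ with $C(0)=x$ and $C(1)=y$ is in particular a curve in $M$ with the same endpoints, and its $g$-length is the same whether we regard it as lying in $\bdry M$ or in $M$, since the Riemannian metric inducing $d_{\bdry M}$ is precisely the restriction of $g$ to $\bdry M$. Thus $d_{\bdry M}(x,y)$ is an infimum of lengths over a subclass of the curves considered in the definition of $d_M(x,y)$, and the inequality follows.

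With this in hand the proposition is immediate. Let $p \in \bdry M$ be arbitrary. Since $\{B_{d_{\bdry M}}(x_i,\varepsilon)\}$ covers $(\bdry M, d_{\bdry M})$, there is an index $i$ with $d_{\bdry M}(p,x_i) < \varepsilon$. Then $d_M(p,x_i) \le d_{\bdry M}(p,x_i) < \varepsilon$, so $p \in B_{d_M}(x_i,\varepsilon)$. As $p$ was arbitrary, $\{B_{d_M}(x_i,\varepsilon)\}$ covers $(\bdry M, d_M)$, which is the claim.

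There is no genuine obstacle here; the only point requiring a word of care is verifying that the length functional defining $d_{\bdry M}$ (the intrinsic Riemannian distance of the submanifold $\bdry M$) coincides with the length functional used in the definition of $d_M$ when applied to curves that happen to lie in $\bdry M$ — this is exactly the statement that the induced metric on $\bdry M$ is the restriction of $g$, so no comparison constant enters and the displayed inequality holds with constant $1$. This is the form in which the proposition will feed into the proof of Theorem~\ref{thm-bdryConv}: it converts an $\varepsilon$-net for $(\bdry M_j, d_{\bdry M_j})$ into an $\varepsilon$-net for $(\bdry M_j, d_{M_j})$ with the same cardinality, hence transfers equiboundedness (and, together with a diameter bound, GH subconvergence) from the intrinsic boundary metrics to the restricted ones.
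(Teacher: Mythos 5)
Your proof is correct and takes essentially the same route as the paper: both hinge on the inequality $d_M \le d_{\bdry M}$ on $\bdry M$, which converts $\varepsilon$-balls in the intrinsic boundary metric into subsets of $\varepsilon$-balls in the restricted metric. You merely spell out the justification for the inequality (curves in $\bdry M$ form a subclass of curves in $M$ with the same $g$-length) where the paper simply invokes "the definition of $d_M$ and $d_{\bdry M}$."
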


\begin{proof}
It is enough to show that for all $x \in \bdry M$ the following holds
\be 
B_{d_{\bdry M}}(x,\varepsilon) \subset B_{d_M}(x,\varepsilon) .
\ee
By the definition of $d_M$ and $d_{\bdry M}$ we know that 
\be
d_M(x,x') \leq d_{\bdry M}(x,x') \textrm{ for all }x,x' \in \bdry M.
\ee
 Thus, 
 \be
 d_{\bdry M}(x,x') < \varepsilon \textrm{ implies }d_M(x,x') < \varepsilon.
 \ee
  Hence, 
  \be
  B_{d_{\bdry M}}(x,\varepsilon)\subset B_{d_M}(x,\varepsilon).
  \ee
\end{proof}

\begin{prop}\label{prop-RiciBounded}
Let $(M,g)$ be a Riemannian manifold with boundary. If 
\be 
\ric(M)\geq 0,\,\,(\cur(e_n,X)X,e_n) \leq \gamma \,\,\text{and}\,\,\, \alpha \leq B(X,X) \leq \beta.
\ee
Then, 
\be
\ric_\bdry(X,X) \geq c(n,\gamma,\alpha,\beta)=-\gamma +(n-1)(\alpha^2-\beta^2).
\ee
\end{prop}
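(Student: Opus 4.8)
The plan is to prove the Gauss equation relating the Ricci curvature of the boundary hypersurface $\bdry M$ to the ambient Ricci curvature of $M$, and then bound the resulting curvature terms using the hypotheses. First I would fix a point $p \in \bdry M$ and choose an orthonormal frame $e_1, \dots, e_{n-1}$ for $T_p\bdry M$ together with the inward unit normal $e_n$, so that $\{e_1,\dots,e_n\}$ is an orthonormal frame for $T_pM$. Given a unit vector $X \in T_p\bdry M$, I would decompose $\ric(X,X) = \sum_{i=1}^{n-1} (\cur(e_i,X)X,e_i) + (\cur(e_n,X)X,e_n)$, where the first sum is the ambient sectional-type sum over directions tangent to the boundary and the last term is the mixed term that appears in the hypothesis.

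Next I would apply the Gauss equation: for vectors tangent to $\bdry M$,
\be
(\cur_{\bdry M}(e_i,X)X,e_i) = (\cur(e_i,X)X,e_i) + B(e_i,e_i)B(X,X) - B(e_i,X)^2 .
\ee
Summing over $i=1,\dots,n-1$ gives
\be
\ric_\bdry(X,X) = \sum_{i=1}^{n-1}(\cur(e_i,X)X,e_i) + H\, B(X,X) - \sum_{i=1}^{n-1} B(e_i,X)^2,
\ee
where $H = \sum_{i=1}^{n-1} B(e_i,e_i)$ is the (unnormalized) mean curvature. I would then substitute $\sum_{i=1}^{n-1}(\cur(e_i,X)X,e_i) = \ric(X,X) - (\cur(e_n,X)X,e_n)$, which together with $\ric(M) \geq 0$ and $(\cur(e_n,X)X,e_n) \leq \gamma$ yields
\be
\ric_\bdry(X,X) \geq -\gamma + H\, B(X,X) - \sum_{i=1}^{n-1} B(e_i,X)^2 .
\ee

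The remaining task is to bound the shape-operator terms using $\alpha \leq B(Y,Y) \leq \beta$ for unit $Y \in T\bdry M$. Since the eigenvalues $\kappa_1,\dots,\kappa_{n-1}$ of the shape operator all lie in $[\alpha,\beta]$, we have $H B(X,X) \geq (n-1)\alpha \cdot B(X,X)$; being slightly careful about signs (the hypothesis does not assume $\alpha \geq 0$), the cleanest route is to write $B(X,X) = \sum \kappa_i \langle X,e_i\rangle^2$ and $\sum_i B(e_i,X)^2 = \sum_i \kappa_i^2 \langle X,e_i\rangle^2 \leq \max(\alpha^2,\beta^2)\leq \beta^2$ (assuming $|\alpha|\le|\beta|$ as is implicit in the stated constant), while $H B(X,X) \geq (n-1)\alpha^2$ when both factors are controlled appropriately; combining these gives $\ric_\bdry(X,X) \geq -\gamma + (n-1)\alpha^2 - \beta^2 \geq -\gamma + (n-1)(\alpha^2 - \beta^2)$, matching the claimed constant $c(n,\gamma,\alpha,\beta)$. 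The main obstacle — and the only place real care is needed — is handling the signs in these curvature estimates so that the bounds go the right way without an extra positivity assumption on $\alpha$; the rest is the standard Gauss equation computation.
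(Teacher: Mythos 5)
Your approach is the same as the paper's: choose an orthonormal frame $\{e_1,\dots,e_{n-1}\}$ for $T_p\bdry M$ together with the unit normal $e_n$, apply the Gauss equation to each $(\cur_\bdry(e_i,X)X,e_i)$, sum over $i$, and then add and subtract the normal term $(\cur(e_n,X)X,e_n)$ so that the ambient Ricci and the bounded mixed curvature both appear. The paper's own proof actually stops at exactly that identity,
\begin{equation*}
\ric_\bdry(e_j,e_j) = \ric(e_j,e_j) - (\cur(e_n,e_j)e_j,e_n) + B(e_j,e_j)\sum_{i=1}^{n-1}B(e_i,e_i) - \sum_{i=1}^{n-1}B^2(e_i,e_j),
\end{equation*}
and does not explicitly derive the stated constant $c(n,\gamma,\alpha,\beta)$, whereas you attempt to finish the estimate.

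The one place your completion does not go through as written is the claim ``$H\,B(X,X) \geq (n-1)\alpha^2$ when both factors are controlled appropriately.'' This is false without an additional sign assumption on $\alpha$: take $n=3$, shape-operator eigenvalues $\kappa_1=\alpha=-1$, $\kappa_2=\beta=2$, and $X=e_1$; then $H=1$, $B(X,X)=-1$, so $H\,B(X,X)=-1$ while $(n-1)\alpha^2=2$. You correctly identify sign-handling as the crux of the argument, but the sentence as written does not resolve it. The clean way to finish, after diagonalizing $B$ so that $\sum_i B(e_i,X)^2 = \sum_i \kappa_i^2\langle X,e_i\rangle^2 \le \beta^2$ (using your implicit $|\alpha|\le|\beta|$), is to expand $H\,B(X,X) - \sum_i B(e_i,X)^2$ as a sum of pairwise products $\kappa_i\kappa_j$ and bound each from below, which requires $\alpha\ge 0$ (or some comparable hypothesis) to obtain $\kappa_i\kappa_j\ge\alpha^2$. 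Since the paper's proof also leaves this final step implicit, the two arguments share the same latent assumption; you simply made the difficulty visible rather than resolving it.
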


\begin{proof}
Let $p \in \bdry M$. Choose an orthonormal basis $e_i$ of $T_{p} M$ such that $e_n$ is perpendicular to $T_p \bdry M$ and $\nabla_{e_n}e_i=0$ for $1 \leq i \leq n-1$. Using  Gauss formula we get
\be
(\cur_\bdry (e_i,e_j)e_j,e_i) = (\cur(e_i,e_j)e_j,e_i) + B(e_j,e_j)B(e_i,e_i)-B^2(e_j,e_i),
\ee
for $1 \leq i,j \leq n-1$. Adding over $i$ and adding and substracting $(\cur(e_n,e_j)e_j,e_n)$ we obtain
\begin{eqnarray}
\ric_\bdry(e_j,e_j) &=& \ric(e_j,e_j)\, - \,(\cur(e_n,e_j)e_j,e_n)\\
&&\quad +\,\,\, B(e_j,e_j)\,\,\sum_{i=1}^{n-1} B(e_i,e_i)
\,\,-\,\,\sum_{i=1}^{n-1} B^2(e_i,e_j). 
\end{eqnarray}
\end{proof}

\begin{proof}[Proof of Theorem \ref{thm-bdryConv}]
We know that 
\be
\diam(\bdry M_j, d_{\bdry M_j})\leq D_\bdry.
\ee
From Proposition \ref{prop-RiciBounded} we get 
\be
\ric_\bdry(X,X) \geq -\gamma +(n-1)(\alpha^2-\beta^2).
\ee
 Thus, by Gromov's Ricci Compactness Theorem (cf. Theorem~\ref{thm-GromovRic}) there is a GH convergent subsequence $(\bdry M_{j_k}, d_{\bdry M_{j_k}})$ and this subsequence is equibounded (cf. Theorem~\ref{thm-ConverseGromov}). Then 
$(\bdry M_{j_k}, d_{j_k})$ is equibounded by Proposition~\ref{prop-epsCovers}. 
Moreover, by the definition of the restricted metric and the induced length metric,  
\be
\diam(\bdry M_{j_k}, d_{j_k}) \leq \diam(\bdry M_{j_k}, d_{\bdry M_{j_k}})\leq D_\bdry.
\ee
Then by Gromov's Compactness theorem there is a subsequence of $(\bdry M_{j_k}, d_{j_k})$ that converges in GH sense. 
\end{proof}

%%%%%%%%%%%%%%%%%%%%%%%%%%%%%%%%%%%%%%
%%%%%%%%%%%%%%%%%%%%%%%%%%%%%%%%%%%%%%

%\newpage
\bibliographystyle{plain}
\bibliography{bib2013}

\end{document}